\newcommand{\RNum}[1]{\lowercase\expandafter{\romannumeral #1\relax}}
\newtheorem{thm}{Theorem}[section]
\newtheorem{lem}[thm]{Lemma}
\newtheorem{exmp}[thm]{Example}
\newtheorem{thm-con}[thm]{Theorem-Conjecture}
\numberwithin{equation}{section}
\theoremstyle{definition}
\newcommand{\F}{\mathbb F}
\def\Tr{{\rm Tr}}
\def\Trn{{\rm Tr}_1^n}
\def\Trm{{\rm Tr}_1^m}
\def\Trtm{{\rm Tr}_1^{2m}}
\def\Trmn{{\rm Tr}_m^{3m}}
\def\Trnn{{\rm Tr}_m^{2m}}
\def\Trmm{{\rm Tr}_m^{n}}
\def\Trfn{{\rm Tr}_4^{n}}
\def\Trtn{{\rm Tr}_2^{n}}
\begin{document}
\title[Differential properties of certain permutation polynomials]{The differential properties of certain permutation polynomials over finite fields}

\author[K. Garg]{Kirpa Garg}
\address{Department of Mathematics, Indian Institute of Technology Jammu, Jammu 181221, India}
\email{kirpa.garg@gmail.com}

\author[S. U. Hasan]{Sartaj Ul Hasan}
\address{Department of Mathematics, Indian Institute of Technology Jammu, Jammu 181221, India}
\email{sartaj.hasan@iitjammu.ac.in}

 \author[P.~St\u anic\u a]{Pantelimon~St\u anic\u a}
 \address{Applied Mathematics Department, Naval Postgraduate School, Monterey, CA 93943, USA}
\email{pstanica@nps.edu}

\thanks{The work of K. Garg is supported by the University Grants Commission (UGC), Government of India. 
The work of S. U. Hasan is partially supported by Core Research Grant CRG/2022/005418 from the Science and Engineering Research Board, Government of India. The work of P. St\u anic\u a is partially supported by a grant from the NPS Foundation.}

 \begin{abstract}  Finding functions, particularly permutations, with good differential properties has received a lot of attention due to their varied applications. For instance, in combinatorial design theory, a correspondence of perfect $c$-nonlinear functions and difference sets in some quasigroups was recently shown by Anbar et al. (J. Comb. Des. 31(12):1-24, 2023). Additionally, in a recent manuscript by Pal et al. (Adv.  Math. Communications, to appear), a very interesting connection between the $c$-differential uniformity and boomerang uniformity, when $c=-1$, was pointed out, showing that they are the same for an odd APN permutation, sparking yet more interest in  the construction of functions with low $c$-differential uniformity. We investigate the $c$-differential uniformity of some classes of permutation polynomials. As a result, we add four more classes of permutation polynomials to the family of functions that only contains a few (non-trivial) perfect $c$-nonlinear functions over finite fields of even characteristic. Moreover, we include a class of permutation polynomials with low $c$-differential uniformity over the field of characteristic~$3$. To solve the involved equations over finite fields, we use various number theoretical techniques, in particular, we find explicitly many Walsh transform coefficients and Weil sums that may be of an independent interest.
 \end{abstract}

\keywords{Finite fields, permutation polynomials, $c$-differential uniformity}

\subjclass[2020]{12E20, 11T06, 94A60}

\maketitle
\section{Introduction} 
Let $\F_{q}$ be the finite field with $q=p^n$ elements, where $p$ is a prime number and $n$ is a positive integer. We use $\F_q[X]$ to denote the ring of polynomials in one variable X with coefficients in $\F_q$ and $\F_{q}^*$ to denote the multiplicative group of nonzero elements of $\F_{q}$. If $F$ is a function from $\F_{q}$ to itself, by using Lagrange's interpolation formula, one can uniquely express it as a polynomial in $\F_{q}[X]$ of degree at most $q-1$. A polynomial $F \in \F_{q}[X]$ is a permutation polynomial of $\F_{q}$ if the mapping $X \mapsto F(X)$ is a bijection on~$\F_{q}$. Permutation polynomials over finite fields are of great interest due to their numerous applications in coding theory~\cite{Ding_C_13, Chapuy_C_07}, combinatorial design theory~\cite{Ding_Co_06}, cryptography~\cite{Lidl_Cr_84, Schwenk_Cr_98}, and other areas of mathematics and engineering. These polynomials with some desired properties such as low differential uniformity, high algebraic degree and high nonlinearity act as important candidates in designing cryptographically strong S-boxes and hence in providing secure communication.

 Borisov et al.~\cite{Borisov} introduced the concept of the multiplicative differentials of the form $(F(cX), F(X))$ and exploited this new class of differentials to attack certain existing ciphers. In 2020, Ellingsen et al.~\cite{Ellingsen} gave the notion of $c$-differential uniformity. For any function $F : \F_{q}\to \F_{q}$ and for any $a , c \in \F_{q}$, the (multiplicative) $c$-derivative of $F$ with respect to $a$ is defined as $_c\Delta_F (X, a) := F(X + a)- cF(X)$ for all $X \in \F_{q}$. For any $a,b \in \F_q$, the $c$-Difference Distribution Table ($c$-DDT) entry $_c\Delta_F(a,b)$ at point $(a,b)$ is the number of solutions { $X \in \F_q$} of the equation $_c\Delta_F(X,a) = b$. The $c$-differential uniformity of $F$, denoted by $_c\Delta_F$, is given by $_c\Delta_{F} := \max\{_c\Delta_{F}(a, b) : a, b \in \F_{q} \hspace{0.2cm}\text{and}\hspace{0.2cm} a \neq 0 \hspace{0.2cm}\text{if} \hspace{0.2cm}c=1\}.$ When $_c\Delta_{F}=1$, we call $F$ a perfect $c$-nonlinear (P$c$N) function and when $_c\Delta_{F}=2$, we call $F$ an almost perfect $c$-nonlinear (AP$c$N) function. Note that for monomial functions, $X\mapsto X^d$, the output differential  $(c_1F(X), F(X))$ is the same as the input differential  $(F(c_2X), F(X))$, where $c_1=c_2^d$, which was the differential  that Borisov et al.~\cite{Borisov}  exploited.
 
 Recently, the authors in~\cite{AMS22} pointed out a connection between the $c$-differential uniformity (cDU) and combinatorial designs by showing that  the graph of a P$c$N function is a difference set in a quasigroup. Difference sets give rise to symmetric designs, which are known to construct optimal self complementary codes. Some types of designs also have applications in secret sharing and visual cryptography. Moreover, recently, Pal and St\u anic\u a~\cite{PS23} showed that the $c$-differential uniformity, when $c=-1$, of an odd APN permutation  $F$ (odd characteristic) equals its boomerang uniformity, and when $F$ is a non-permutation, the boomerang uniformity of $F$ is the maximum of the $(-1)$-DDT entries (without  the first row/column). In view of the applications stated above, the construction of functions, particularly permutations, with low $c$-differential uniformity is an interesting problem, and recent work has focused heavily in this direction. One can refer to~\cite{HPRS20,JKK2,Ozbudak,CPC,MRS,wang,lwz,Hu} for the numerous functions with low $c$-differential uniformity investigated till now. There are very few known (non-trivial, that is, nonlinear) classes of P$c$N and AP$c$N functions   over a finite field with even characteristic (see, for example,~\cite{Garg, HPS1,JKK, TL23}).

In this paper, we compute the $c$-differential uniformity of some classes of permutation polynomials introduced in~\cite{LWC,WBZ,ZH}. The methods we employ can be of independent interest, and they use Walsh transforms computations, Weil sums, and some detailed investigation of the involved equations via number theory tools. The structure of the paper is as follows. We first recall some results in Section~\ref{S2}, that are required in the subsequent sections. In Section~\ref{S3} we compute the $c$-differential uniformity of four classes of permutation polynomials over finite fields of even characteristic. Further, Section~\ref{S4} deals with the $c$-differential uniformity of one class of permutation polynomials over finite fields of characteristic three.
  Finally, in Section~\ref{sec:concl}, we conclude the paper.

\section{Preliminaries}
 \label{S2}
 In this section, we first review a definition and provide some lemmas to be used later in subsequent sections. Throughout the paper, we shall use $\Trmm$ to denote the (relative) trace function from $\F_{p^n} \rightarrow \F_{p^m}$, i.e., $\Trmm(X) = \sum_{i=0}^{\frac{n-m}{m}} X^{p^{mi}}$, where $m$ and $n$ are positive integers and $m|n$.  For $m=1$, we use $\Tr$ to denote the absolute trace. Also $v_p(n)$ will denote the highest nonnegative exponent $v$ such that $p^v$ divides $n$ (that is, the $p$-adic valuation). 
 
We first recall the definition of the Walsh transform of a function (see, for example~\cite{TH}).
 For a function $F : \F_{p^n} \rightarrow \F_{p}$, the Walsh transform of $F$ at $v \in  \F_{p^n}$, is defined as
\[
 \mathcal{W}_F(v) = \sum_{X \in \F_{p^n}} \omega^{F(X) - \Tr(v X)},
 \]
where $\omega = e^{\frac{2 \pi i}{p}}$ is a complex primitive $p$th root of unity.

We now present a lemma dealing with solutions of a cubic equation over a finite field with even characteristic. We prefer the statement from~\cite[Lemma 2.2]{ZH}, which is derived from the original paper of Williams~\cite{W75}.
\begin{lem}\label{L01}\textup{\cite[Lemma 2.2]{ZH}} For a positive integer $n$ and $a \in \F_{2^n}^{*}$, the cubic equation $X^3+X+a=0$ has 
\begin{enumerate}
 \item[$(1)$] a unique solution in $\F_{2^n}$ if and only if $\Trn(a^{-1}+1)=1$;
 \item[$(2)$] three distinct solutions in $\F_{2^n}$ if and only if $p_n(a)=0$, where the polynomial $p_n(X)$ is recursively defined by the equations $p_1(X)=p_2(X)=X, p_k(X)=p_{k-1}(X)+X^{2^{k-3}}p_{k-2}(X)$ for $k \geq 3$;
 \item[$(3)$] no solutions in $\F_{2^n}$, otherwise.
\end{enumerate}
\end{lem}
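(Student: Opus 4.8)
The plan is to combine three ingredients: a separability check, a Vieta-type argument that rules out the possibility of exactly two roots, and a resolvent computation that pins down the trace condition in part $(1)$; separating part $(2)$ from part $(3)$ is then an induction on $n$ matching the recursion that defines $p_n$.

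First I would record that for $a \neq 0$ the cubic $f(X) = X^3 + X + a$ is separable: in characteristic $2$ one has $f'(X) = X^2 + 1 = (X+1)^2$, whose only root is $X = 1$, and $f(1) = a \neq 0$, so $\gcd(f, f') = 1$ and $f$ has three distinct roots $x_1, x_2, x_3$ in the algebraic closure. Since the coefficient of $X^2$ vanishes, Vieta gives $x_1 + x_2 + x_3 = 0$; hence if any two roots lie in $\F_{2^n}$ so does the third, and the number of roots in $\F_{2^n}$ is forced to be $0$, $1$, or $3$ --- there is no ``exactly two'' case. This already reduces the problem to distinguishing a single root from the split/inert alternative, and then separating the latter two.

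For part $(1)$ the cleanest route is the cubic resolvent adapted to characteristic $2$. Set $u = x_1 x_2^2 + x_2 x_3^2 + x_3 x_1^2$ and $v = x_1^2 x_2 + x_2^2 x_3 + x_3^2 x_1$; these are interchanged by the odd permutations of the roots and fixed by the cyclic ones, so $u + v$ and $uv$ are symmetric and $u, v$ are the roots of a quadratic over $\F_{2^n}$. A direct symmetric-function computation, using $\sum x_i = 0$, $\sum_{i<j} x_i x_j = 1$, and $x_1 x_2 x_3 = a$, yields $u + v = a$ and $uv = 1 + a^2$, so the resolvent is $Z^2 + a Z + (1 + a^2)$. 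The Galois group of $f$ over $\F_{2^n}$ is cyclic --- equivalently $u, v \in \F_{2^n}$, equivalently $f$ either splits completely or is irreducible --- precisely when this quadratic splits over $\F_{2^n}$, i.e. when $\Trn\!\left(uv/(u+v)^2\right) = \Trn(a^{-2} + 1) = \Trn(a^{-1} + 1) = 0$, where the last equality uses that the Frobenius $t \mapsto t^2$ preserves the trace. Consequently $f$ has exactly one root in $\F_{2^n}$ if and only if the resolvent is irreducible, i.e. $\Trn(a^{-1} + 1) = 1$, which is part $(1)$.

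It remains to split the trace-zero case into three roots and no roots, which is where the recursively defined polynomial $p_n$ enters and where the real work lies. Assuming $\Trn(a^{-1}+1) = 0$, the cubic either splits completely over $\F_{2^n}$ or is irreducible, and I would prove by induction on $n$ that the split case is characterized exactly by $p_n(a) = 0$. The idea is to track how solvability of $f$ over $\F_{2^n}$ relates to solvability over its subfields as $n$ grows, and to show that the resulting predicate obeys a two-term recurrence whose algebraic shadow is exactly $p_k(X) = p_{k-1}(X) + X^{2^{k-3}} p_{k-2}(X)$, with base cases $p_1 = p_2 = X$. Verifying that this is the correct recurrence --- matching each step of the ascent in $n$ to the multiplicative factor $X^{2^{k-3}}$ together with the additive correction term, and checking the base cases $n = 1, 2$ by hand --- is the main obstacle, and is precisely the content of Williams' original argument that I would follow. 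Once the recurrence is validated, $p_n(a) = 0$ detects the split (three-root) case among the trace-zero values of $a$, and its failure leaves the irreducible (no-root) case, giving parts $(2)$ and $(3)$.
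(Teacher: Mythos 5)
First, a point of comparison: the paper does not prove this lemma at all --- it is quoted from \cite[Lemma 2.2]{ZH}, which in turn traces back to Williams \cite{W75} --- so there is no internal proof to measure you against, and your attempt must stand on its own.

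Your preliminary reductions and your part $(1)$ are correct and complete. Separability ($f'(X)=(X+1)^2$ and $f(1)=a\neq 0$), the Vieta argument ruling out exactly two rational roots, and the characteristic-$2$ resolvent all check out: with $e_1=0$, $e_2=1$, $e_3=a$ one indeed gets $u+v=a$ and $uv=1+a^2$; the resolvent $Z^2+aZ+(1+a^2)$ is separable because $a\neq 0$; and the Frobenius acts as an even permutation of the three roots precisely when this quadratic splits, i.e. when $\Trn(a^{-2}+1)=\Trn(a^{-1}+1)=0$. Hence ``exactly one root if and only if $\Trn(a^{-1}+1)=1$'' is fully established by your argument.

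The genuine gap is in parts $(2)$ and $(3)$, which is where the entire nontrivial content of the lemma sits. Having reduced to the trace-zero case, you must separate ``splits completely'' from ``irreducible'' by means of the recursively defined polynomial $p_n$, and at exactly this point you stop proving and start describing: you state that verifying the recurrence $p_k=p_{k-1}+X^{2^{k-3}}p_{k-2}$ against the splitting behaviour ``is the main obstacle, and is precisely the content of Williams' original argument that I would follow.'' That is a citation, not a proof; the equivalence ``three roots iff $p_n(a)=0$'' is left entirely unestablished. Moreover, the inductive mechanism you sketch --- relating solvability over $\F_{2^n}$ to solvability ``over its subfields as $n$ grows'' --- cannot work as stated, because $\F_{2^{n-1}}$ and $\F_{2^{n-2}}$ are not subfields of $\F_{2^n}$, and $a$ need not lie in any proper subfield in the first place. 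The recursion defining $p_n$ does not arise from a field tower: in Williams' treatment it tracks a Frobenius-iterated (Lucas-type) sequence attached to the auxiliary quadratic $t^2+at+1$, whose roots parametrize the roots of the cubic through the substitution $x=u+u^{-1}$ (which turns $X^3+X+a=0$ into $u^6+au^3+1=0$), with $p_n(a)=0$ encoding that those quadratic roots are cubes in the appropriate group. Without carrying out that construction, parts $(2)$ and $(3)$ remain unproven, so the attempt proves only one of the three assertions.
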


Next, we recall five classes of permutation polynomials for which we investigate their $c$-differential uniformity. For an element $\delta$ in a finite field $\F_{p^{2m}}$, we let $\bar \delta=\delta^{p^m}$. 

\begin{lem} \textup{\cite[Theorem 3.1]{ZH}}
\label{L02}
Let $m$ be a positive integer and $p_m(X)$ be defined in Lemma~\textup{\ref{L01}}. Let
$\delta\in\F_{2^{2m}}$ satisfy either $\delta\in\F_{2^m}$, or $\delta \not \in  \F_{2^m}$ with $\Trtm(\delta) = \Trm(1)$ and
$p_m((\delta + \bar \delta)^{-1}) \neq 0$. The polynomial
$$
F(X)=(X^{2^m}+X+\delta)^{2^{2m-2}+2^{m-2}+1}+X
$$
permutes $\F_{2^{2m}}$.
\end{lem}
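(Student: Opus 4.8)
The plan is to prove that $F$ is injective on $\F_{2^{2m}}$; surjectivity then follows by finiteness. The first step is to linearize the problem by simplifying the exponent $d:=2^{2m-2}+2^{m-2}+1$. Writing $a:=X^{2^m}+X+\delta$ and using $4d=2^{2m}+2^m+4$, for every $a\in\F_{2^{2m}}$ one has $a^{4d}=a^{2^{2m}}\cdot a^{2^m}\cdot a^4=a^4\,(a\bar a)$, since $a^{2^{2m}}=a$ and $a\bar a=a^{2^m+1}$. Because $x\mapsto x^4$ is a bijection of $\F_{2^{2m}}$, this yields the clean identity $a^d=a\,(a\bar a)^{1/4}$, where the fourth root is unique. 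The key observation is that $X^{2^m}+X=\Trnn(X)\in\F_{2^m}$, so on setting $r:=\Trnn(X)$ we get $a=\delta+r$ and $a\bar a=(\delta+r)(\bar\delta+r)=\delta\bar\delta+\Trnn(\delta)\,r+r^2=:\nu(r)\in\F_{2^m}$. Hence $F(X)=(\delta+r)\,\nu(r)^{1/4}+X$.

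Next I would impose $F(X)=F(Y)$ and put $r:=\Trnn(X)$, $t:=\Trnn(Y)$, and $T:=\Trnn(\delta)=\delta+\bar\delta\in\F_{2^m}$. The equation rewrites as
\[
X+Y=(\delta+r)\,\nu(r)^{1/4}+(\delta+t)\,\nu(t)^{1/4}.
\]
Applying the additive map $\Trnn(\cdot)=(\cdot)+(\cdot)^{2^m}$ to both sides, and using that $r,t,\nu(r)^{1/4},\nu(t)^{1/4}\in\F_{2^m}$ are fixed by $x\mapsto x^{2^m}$ while $(\delta+r)^{2^m}=\bar\delta+r$, the left side becomes $r+t$ and each term on the right contributes $T\,\nu(\cdot)^{1/4}$, so that $r+t=T\bigl(\nu(r)^{1/4}+\nu(t)^{1/4}\bigr)$. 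Writing $s:=r+t$ and raising this to the fourth power clears the roots; since $\nu(r)+\nu(t)=(r+t)^2+T(r+t)=s^2+Ts$, I obtain
\[
s^4=T^4\bigl(s^2+Ts\bigr),\qquad\text{equivalently}\qquad s\bigl(s^3+T^4s+T^5\bigr)=0.
\]

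The last step is to force $s=0$: this gives $r=t$, hence $\nu(r)=\nu(t)$, and then the displayed equation for $X+Y$ collapses to $X+Y=0$, i.e.\ $X=Y$. If $\delta\in\F_{2^m}$ then $T=0$, so $s^4=0$ gives $s=0$ at once, settling the first case with no extra hypothesis. If $\delta\notin\F_{2^m}$ then $T\neq0$; assuming $s\neq0$, I substitute $s=T^2u$ into $s^3+T^4s+T^5=0$ and divide by $T^6$ to reach exactly the Williams cubic $u^3+u+T^{-1}=0$. I then invoke Lemma~\ref{L01} with $a=T^{-1}\in\F_{2^m}^{*}$: the hypothesis $p_m\bigl((\delta+\bar\delta)^{-1}\bigr)=p_m(T^{-1})\neq0$ excludes the three-solution case, while the condition for a unique solution, $\Trm(a^{-1}+1)=\Trm(T+1)=1$, is excluded because transitivity of the trace gives $\Trm(T)=\Trm(\Trnn(\delta))=\Trtm(\delta)=\Trm(1)$, whence $\Trm(T+1)=0$. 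Thus the cubic has no root in $\F_{2^m}$, contradicting $s\neq0$; therefore $s=0$ and $X=Y$.

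I expect the heart of the argument to be twofold: first, spotting the exponent identity $a^d=a\,(a\bar a)^{1/4}$ that linearizes $F$, and second, reducing injectivity to the single cubic $u^3+u+T^{-1}=0$ whose solvability is governed precisely by Lemma~\ref{L01}. The main obstacle is organizing the algebra so that both hypotheses on $\delta$ appear exactly where they are needed, namely to rule out the three-solution case (via $p_m$) and the unique-solution case (via the trace condition), leaving only the trivial root $s=0$.
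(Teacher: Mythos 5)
Your proof is correct. Note that this paper does not prove Lemma~\ref{L02} at all: it is quoted verbatim from \cite[Theorem 3.1]{ZH}, so there is no internal proof to compare against; the comparison can only be with the strategy of the cited source. Your argument is sound at every step: the exponent identity $4d=2^{2m}+2^m+4$ and the bijectivity of $x\mapsto x^4$ give $a^d=a\,(a\bar a)^{1/4}$; since $a=\delta+\Trnn(X)$ with $\Trnn(X)\in\F_{2^m}$, the map collapses to $F(X)=(\delta+r)\nu(r)^{1/4}+X$ with $\nu(r)\in\F_{2^m}$; applying $\Trnn$ to $F(X)=F(Y)$ kills the ambient-field variables and leaves an equation in $s=r+t\in\F_{2^m}$, which after clearing roots factors as $s(s^3+T^4s+T^5)=0$; and the normalization $s=T^2u$ produces exactly the Williams cubic $u^3+u+T^{-1}=0$ over $\F_{2^m}$, to which Lemma~\ref{L01} applies. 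Both hypotheses on $\delta$ are consumed precisely where they must be: $p_m(T^{-1})\neq 0$ kills the three-root case, and $\Trtm(\delta)=\Trm(1)$ (via trace transitivity, $\Trm(T)=\Trtm(\delta)$) kills the unique-root case, so the cubic has no root and $s=0$, whence $X=Y$; the case $\delta\in\F_{2^m}$ is immediate since $T=0$. This reduction to a cubic over the subfield governed by Lemma~\ref{L01} is essentially the same mechanism as in the original proof in \cite{ZH}, so your write-up is a faithful, self-contained reconstruction of the cited result rather than a genuinely different route. One cosmetic caveat: the identity $4d=2^{2m}+2^m+4$ (and indeed the statement itself) implicitly requires $m\geq 2$ so that $2^{m-2}$ is an integer, which you may wish to remark.
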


\begin{lem}  \textup{\cite[Theorem 2.1]{ZH}} 
\label{L05}
For a positive integer $m \not \equiv
0 \pmod 3$ and any element $\delta \in \F_{2^{2m}}$ , the
polynomial
$$
F(X)=(X^{2^m}+X+\delta)^{3 \cdot 2^{2m-2}+2^{m-2}}+X
$$
permutes $\F_{2^{2m}}$.
\end{lem}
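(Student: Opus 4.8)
The plan is to exploit the additive structure of the inner map and reduce the problem from $\F_{2^{2m}}$ to $\F_{2^m}$ by an AGW-type argument. Note that $X^{2^m}+X=\Trnn(X)$, so the inner expression $X^{2^m}+X+\delta=\Trnn(X)+\delta$ depends on $X$ only through $t:=\Trnn(X)\in\F_{2^m}$; writing $h(t)=(t+\delta)^d$ we have $F(X)=X+h(\Trnn(X))$. Applying $\Trnn$ and using its $\F_2$-linearity gives $\Trnn(F(X))=t+\Trnn(h(t))=:\bar F(t)$, a well-defined map $\bar F:\F_{2^m}\to\F_{2^m}$. If $F(X)=F(Y)$, then $\bar F(\Trnn X)=\bar F(\Trnn Y)$; once $\bar F$ is known to be injective this forces $\Trnn X=\Trnn Y=t$, whence $F(X)=X+h(t)$ and $F(Y)=Y+h(t)$ give $X=Y$. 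Thus it suffices to show that $\bar F$ permutes $\F_{2^m}$ (here we take $m\ge 2$, so that $d=2^{m-2}(3\cdot 2^m+1)$ is a genuine integer).

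Second, I would compute $\bar F$ in closed form. Put $c_0=\delta+\bar\delta$ and $N=\delta\bar\delta$, both in $\F_{2^m}$. For $t\in\F_{2^m}$ one has $(t+\delta)^{2^m}=t+\bar\delta$, hence $(t+\delta)^{3\cdot2^m+1}=(t+\bar\delta)^3(t+\delta)$ and, symmetrically, $(t+\bar\delta)^{3\cdot2^m+1}=(t+\delta)^3(t+\bar\delta)$. Since $\Trnn((t+\delta)^d)=(t+\delta)^d+(t+\bar\delta)^d$ and Frobenius is additive, the cross terms collapse:
\[
\Trnn\!\big((t+\delta)^d\big)=\big[(t+\delta)(t+\bar\delta)\big((t+\delta)^2+(t+\bar\delta)^2\big)\big]^{2^{m-2}}=c_0^{2^{m-1}}\big(t^2+c_0t+N\big)^{2^{m-2}},
\]
using $(t+\delta)^2+(t+\bar\delta)^2=c_0^2$. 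Therefore $\bar F(t)=t+c_0^{2^{m-1}}(t^2+c_0t+N)^{2^{m-2}}$ is an affine $\F_2$-linearized polynomial. When $c_0=0$ (that is, $\delta\in\F_{2^m}$) this is simply $\bar F(t)=t$; in general $\bar F$ permutes $\F_{2^m}$ if and only if its linear part $L_0(t)=t+c_0^{2^{m-1}}t^{2^{m-1}}+c_0^{3\cdot2^{m-2}}t^{2^{m-2}}$ has trivial kernel.

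The crux is the kernel computation, and this is where the hypothesis on $m$ enters. Since $x\mapsto x^4$ is a bijection of $\F_{2^m}$, we have $L_0(t)=0$ iff $L_0(t)^4=0$; and for $t,c_0\in\F_{2^m}$ the identities $t^{2^m}=t$, $c_0^{2^m}=c_0$ reduce $L_0(t)^4$ to $t^4+c_0^2t^2+c_0^3t=t(t^3+c_0^2t+c_0^3)$. Hence a nonzero kernel element would satisfy $t^3+c_0^2t+c_0^3=0$ with $c_0\neq0$, and the substitution $t=c_0s$ turns this into $s^3+s+1=0$. The entire argument thus reduces to the single fact that $s^3+s+1=0$ has no solution in $\F_{2^m}$: the polynomial $X^3+X+1$ is irreducible over $\F_2$, so its roots generate $\F_{2^3}=\F_8$ and lie in $\F_{2^m}$ precisely when $3\mid m$ (equivalently, by Lemma~\ref{L01} with $a=1$, the cubic is never in the unique-root case and falls in the three-root case only when $3\mid m$). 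The assumption $m\not\equiv0\pmod3$ is exactly what rules this out, forcing $\ker L_0=\{0\}$ and completing the proof. I expect this reduction to the fixed cubic $s^3+s+1$, together with the observation that $3\nmid m$ is precisely the obstruction to its solvability, to be the heart of the matter; everything before it is routine bookkeeping once the descent to $\F_{2^m}$ is set up.
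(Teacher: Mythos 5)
Your proposal is correct, but note that this paper never proves Lemma~\ref{L05} at all: it is imported verbatim from \cite[Theorem 2.1]{ZH}, so there is no in-paper proof to compare against; what follows compares your argument with the technique of the cited source and with the closely related computations this paper does carry out. Your proof is a complete, self-contained argument of the standard AGW/descent type, and every step checks out: the reduction $F(X)=X+h(\Trnn(X))$ with $\bar F(t)=t+\Trnn\bigl((t+\delta)^{d}\bigr)$ is the same commutative-diagram device used by Zha--Hu; the closed form $\Trnn\bigl((t+\delta)^{d}\bigr)=c_0^{2^{m-1}}\bigl(t^2+c_0t+N\bigr)^{2^{m-2}}$ with $c_0=\delta+\bar\delta$, $N=\delta\bar\delta$ is verified by the Frobenius manipulations you describe; and the kernel computation correctly reduces, after the substitution $t=c_0s$, to whether $s^3+s+1=0$ has a root in $\F_{2^m}$, which happens precisely when $3\mid m$ since $X^3+X+1$ is irreducible over $\F_2$ with splitting field $\F_{2^3}$. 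This last fact is exactly the hinge that this paper itself uses inside the proof of Theorem~\ref{T3}, where the same cubic $Z^3+Z+1=0$ appears; there the authors rule out roots by a parity argument on the number of terms of the recursively defined polynomial $p_m(X)$ from Lemma~\ref{L01}, whereas your irreducibility/subfield argument is cleaner and avoids Lemma~\ref{L01} entirely (your parenthetical appeal to it with $a=1$ is harmless but unnecessary). You were also right to flag the implicit hypothesis $m\ge 2$, without which the exponent $2^{m-2}\bigl(3\cdot 2^m+1\bigr)$ is not an integer; the cited statement is silent on this point.
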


\begin{lem} \textup{\cite[Theorem 3.1]{WBZ}} 
 \label{L06}
 For a positive integer $m \not \equiv
0 \pmod 3$ and any element $\delta \in \F_{2^{2m}}$ , the
polynomial
$$
F(X)=(X^{2^m}+X+\delta)^{3 \cdot 2^{m-2}+2^{2m-2}}+X
$$
permutes $\F_{2^{2m}}$.
\end{lem}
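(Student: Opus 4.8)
The plan is to reduce the permutation question for $F$ on $\F_{2^{2m}}$ to a bijectivity question for an induced map on the subfield $\F_{2^m}$, by a standard fiber-by-fiber (AGW-type) argument. Put $q=2^m$, let $L(X)=X^{q}+X=\Trnn(X)$, and write $d=3\cdot 2^{m-2}+2^{2m-2}$, so that $4d=3q+q^{2}$ (take $m\ge 2$). The map $L$ is $\F_2$-linear, surjective onto $\F_{2^m}$, with kernel $\F_{2^m}$, and its fibers are the cosets of $\F_{2^m}$. On a fiber $L^{-1}(s)$ the quantity $u:=L(X)+\delta=s+\delta$ is constant, so there $F(X)=u^{d}+X$ is a mere translation and hence injective. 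First I would verify the intertwining relation $L\big(F(X)\big)=g\big(L(X)\big)$ with
$$g(s)=s+\Trnn\big((s+\delta)^{d}\big)\in\F_{2^m},\qquad s\in\F_{2^m};$$
together with the injectivity on fibers this shows $F$ permutes $\F_{2^{2m}}$ if and only if $g$ permutes $\F_{2^m}$.

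The heart of the argument is evaluating $w(s):=\Trnn\big((s+\delta)^{d}\big)$ in closed form, and the trick I would use is to pass to fourth powers so as to exploit $t^{q^{2}}=t$ together with the characteristic-two identity $(A+B)^{4}=A^{4}+B^{4}$. Writing $t=s+\delta$, $\bar t=t^{q}=s+\bar\delta$, $T=\delta+\bar\delta$ and $M=\delta\bar\delta$ (both in $\F_{2^m}$), the identity $4d=3q+q^{2}$ gives $t^{4d}=t\,\bar t^{\,3}$, whence
$$w(s)^{4}=\big(t^{d}+t^{dq}\big)^{4}=t\,\bar t^{\,3}+\bar t\,t^{3}=t\bar t\,(t+\bar t)^{2}=T^{2}\big(s^{2}+Ts+M\big).$$
Since $x\mapsto x^{4}$ is a bijection of $\F_{2^m}$, this pins down $w(s)$ uniquely. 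I would then rewrite $g(s)=c$ as $w(s)=c+s$, raise to the fourth power, and use the same injectivity of $x\mapsto x^{4}$ to obtain the \emph{equivalent} quartic condition
$$P_{c}(s):=s^{4}+T^{2}s^{2}+T^{3}s+\big(c^{4}+T^{2}M\big)=0.$$
Thus $g$ is a bijection exactly when $P_{c}$ has a unique root in $\F_{2^m}$ for every $c$.

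To finish, note that $c\mapsto c^{4}+T^{2}M$ is a bijection of $\F_{2^m}$, so the previous condition is equivalent to the linearized polynomial $\phi(s)=s^{4}+T^{2}s^{2}+T^{3}s$ being a bijection of $\F_{2^m}$, i.e.\ to $\phi$ having trivial kernel. If $\delta\in\F_{2^m}$ then $T=0$, so $w\equiv 0$, $g$ is the identity, and $F$ permutes unconditionally. If $\delta\notin\F_{2^m}$ then $T\neq 0$; factoring $\phi(s)=s\,(s^{3}+T^{2}s+T^{3})$ and substituting $s=Tz$ reduces the nonzero-kernel question to the solvability of $z^{3}+z+1=0$ in $\F_{2^m}$. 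As $z^{3}+z+1$ is irreducible over $\F_2$ with roots generating $\F_{8}$ (cf.\ Lemma~\ref{L01} with $a=1$), it has a root in $\F_{2^m}$ if and only if $3\mid m$. Hence $\phi$ has trivial kernel precisely when $m\not\equiv 0\pmod 3$, and combining the two cases gives the claim for every $\delta$.

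The step I expect to be the genuine obstacle is the closed-form evaluation of the trace $w(s)=\Trnn\big((s+\delta)^{d}\big)$: the exponent $d$ admits no convenient reduction on the nose, and it is precisely the fourth-power maneuver that collapses the awkward fractional-exponent expression into the clean polynomial $T^{2}(s^{2}+Ts+M)$. Everything downstream—the quartic reformulation, the passage to the linearized $\phi$, and the emergence of the hypothesis $3\nmid m$ through the irreducible cubic $z^{3}+z+1$—is then routine.
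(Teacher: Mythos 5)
Your proposal is correct, but there is nothing in the paper to compare it against: the paper states Lemma~\ref{L06} as a quoted result from \cite[Theorem 3.1]{WBZ} and gives no proof of it (nor does it reprove the sibling Lemmas~\ref{L02}, \ref{L05}, \ref{L03}, \ref{L04}). So your argument stands as a self-contained proof of a fact the paper merely imports. As a proof it is sound: the fiber decomposition along $L(X)=X^{2^m}+X$, the intertwining relation $L\circ F=g\circ L$ with $g(s)=s+\Trnn\bigl((s+\delta)^{d}\bigr)$, and the injectivity of $F$ on cosets of $\F_{2^m}$ give the standard AGW-type reduction to bijectivity of $g$ on $\F_{2^m}$; the fourth-power identity $4d=3\cdot 2^m+2^{2m}$ correctly collapses $w(s)^4$ to $T^2(s^2+Ts+M)$; and the passage to the linearized polynomial $\phi(s)=s^4+T^2s^2+T^3s$ (using that $c\mapsto c^4+T^2M$ and $x\mapsto x^4$ are bijections) is valid, with the kernel analysis reducing, for $T\neq 0$, to solvability of $z^3+z+1=0$ in $\F_{2^m}$, which fails exactly when $3\nmid m$. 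It is worth noting that this same cubic $z^3+z+1$ is precisely what appears in the paper's own proof of Theorem~\ref{T3} (there handled via Lemma~\ref{L01} and the parity of the number of terms of $p_m$), so your route exposes the role of the hypothesis $m\not\equiv 0 \pmod 3$ in the same way the paper's later computations do; your direct appeal to the irreducibility of $z^3+z+1$ over $\F_2$ and the subfield criterion $\F_8\subseteq\F_{2^m}\Leftrightarrow 3\mid m$ is cleaner than invoking Lemma~\ref{L01}. The only caveat, which you already flag, is that the exponent $3\cdot 2^{m-2}+2^{2m-2}$ requires $m\geq 2$ to be an integer, so the statement should be read with that restriction.
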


\begin{lem}  \textup{\cite[Proposition 8]{LWC}}
\label{L03}
For a positive integer $m$ and a fixed $\delta \in \F_{2^{3m}}$ with $\Trmm (\delta) = 0$, where $n=3m$, the polynomial
$$
F(X)=(X^{2^m}+X+\delta)^{{2^{2m+1}}+2^m}+(X^{2^m}+X+\delta)^{2^{2m}+2^{m+1}}+X
$$
permutes $\F_{2^{3m}}$.
\end{lem}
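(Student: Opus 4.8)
The plan is to reduce $F$ to the transparent form ``identity plus a norm'', after which bijectivity is almost immediate. Throughout set $q=2^m$, so that we work over $\F_{q^3}=\F_{2^{3m}}$ and $\Trmm$ is the trace $\F_{q^3}\to\F_q$, namely $\Trmm(z)=z+z^q+z^{q^2}$. Put $y=X^{2^m}+X+\delta=X^q+X+\delta$ and rewrite the exponents in base $q$: from $2^{2m+1}+2^m=2q^2+q$ and $2^{2m}+2^{m+1}=q^2+2q$ we get
\[
F(X)=y^{\,2q^2+q}+y^{\,q^2+2q}+X .
\]

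The first step is to compute $\Trmm(y)$. Since the relative Frobenius $z\mapsto z^q$ only permutes conjugates, $\Trmm(X^q)=\Trmm(X)$, so in characteristic two the terms $\Trmm(X^q)+\Trmm(X)=2\,\Trmm(X)$ cancel and $\Trmm(y)=\Trmm(\delta)=0$ by hypothesis. This is the sole use of the assumption $\Trmm(\delta)=0$, and it is the pivot of the whole argument, because it yields $y^q+y^{q^2}=y$ for every $X$.

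The second step is the algebraic collapse. Factoring out $y^{\,q^2+q}$ and then substituting the identity just obtained,
\[
y^{\,2q^2+q}+y^{\,q^2+2q}=y^{\,q^2+q}\bigl(y^{q^2}+y^{q}\bigr)=y^{\,q^2+q}\cdot y=y^{\,q^2+q+1}.
\]
Since $q^2+q+1=(q^3-1)/(q-1)$ is the norm exponent, $y^{\,q^2+q+1}$ is fixed by $z\mapsto z^q$ and hence lies in $\F_q$; thus $F(X)=X+y^{\,q^2+q+1}$ is the identity plus an $\F_q$-valued term.

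The final step is injectivity, which is now routine. If $F(X_1)=F(X_2)$, then $t:=X_1-X_2$ equals a difference of two elements of $\F_q$, so $t\in\F_q$. But translating $X$ by $t\in\F_q$ leaves $y$ unchanged, because $(X+t)^q+(X+t)+\delta=X^q+X+\delta$ (here $t^q=t$ and $t+t=0$); hence the two norm terms agree and $F(X_1)=F(X_2)$ reduces to $X_1=X_2$. Therefore $F$ is a bijection of $\F_{2^{3m}}$, i.e.\ a permutation polynomial. I expect the only delicate point to be the algebraic collapse of the two power terms into the single norm $y^{\,q^2+q+1}$; once the trace-zero identity $y^q+y^{q^2}=y$ is available, every remaining step is formal.
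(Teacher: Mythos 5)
Your proof is correct, and there is in fact nothing internal to compare it against: the paper does not prove this statement, but quotes it verbatim as Lemma~\ref{L03} from \cite[Proposition 8]{LWC}. So your argument should be measured against the cited source. In \cite{LWC} (and in the surrounding literature on polynomials of the shape $(X^{p^m}-X+\delta)^{s_1}+(X^{p^m}-X+\delta)^{s_2}+X$), the standard route is an AGW-type commutative-diagram criterion: writing $\varphi(X)=X^{2^m}+X+\delta$ and $h(y)=y^{s_1}+y^{s_2}$, the map $F=h\circ\varphi+\mathrm{id}$ permutes $\F_{2^{3m}}$ if and only if $y\mapsto h(y)^{2^m}+h(y)+y$ permutes the image set $\varphi(\F_{2^{3m}})=\{y:\Trmm(y)=\Trmm(\delta)\}$, where $n=3m$; one then checks that on this set $h(y)$ lies in $\F_{2^m}$, so $h(y)^{2^m}+h(y)=0$ and the induced map is the identity. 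Your proof extracts the same structural fact directly, without invoking any permutation criterion: on the trace-zero set the two terms collapse, via $y^{q^2+q}\bigl(y^{q^2}+y^q\bigr)=y^{q^2+q+1}$ with $q=2^m$, to the norm $y^{q^2+q+1}\in\F_q$, so that $F(X)=X+N(y)$, and injectivity follows by hand because $y$ is invariant under translation $X\mapsto X+t$ with $t\in\F_q$. Every step checks out, including the exponent arithmetic $2^{2m+1}+2^m=2q^2+q$ and $2^{2m}+2^{m+1}=q^2+2q$, the validity of the collapse at $y=0$, and the fact that injectivity on a finite set suffices. What your direct route buys is complete self-containment and transparency about the single point where $\Trmm(\delta)=0$ is used; what the criterion-based route of \cite{LWC} buys is uniformity, since the same mechanism disposes of all the exponent pairs $(s_1,s_2)$ treated in that paper, not just this one.
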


\begin{lem} \textup{\cite[Proposition 10]{LWC}}
\label{L04} 
For a positive integer $m$ and a fixed $\delta \in \F_{3^{2m}}$, if $(1-[\Trnn(\delta)]^4 )$ is a
square element in $\F_{3^m}$, then the polynomial
$$
F(X)=(X^{3^m}-X+\delta)^{3^{m}+4}+(X^{3^m}-X+\delta)^{5}+X
$$
is a permutation of $\F_{3^{2m}}$.
\end{lem}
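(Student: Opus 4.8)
The plan is to use the Akbary--Ghioca--Wang (AGW) criterion to reduce the permutation property of $F$ on $\F_{3^{2m}}$ to a bijectivity question on a single affine line, and then to settle that question by a quadratic-character computation. Within this proof I write $q=3^m$, so that we work over $\F_{q^2}=\F_{3^{2m}}$, and I set $L(X)=X^{q}-X$. The map $L$ is $\F_q$-linear with kernel $\F_q$, so its image equals $U:=\{u\in\F_{q^2}: u^{q}=-u\}=\ker\Trnn$, which has exactly $q$ elements. Writing $g(y)=y^{q+4}+y^{5}$ and $\psi(X)=L(X)+\delta$, one has $F(X)=g(\psi(X))+X$, and an additive computation gives $\psi(F(X))=L\big(g(\psi(X))\big)+\psi(X)$. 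Thus $\psi\circ F=\bar F\circ\psi$, where $\bar F(y)=y+L(g(y))$ maps the coset $V:=\delta+U=\psi(\F_{q^2})$ into itself. Since $g(y)$ is constant on each fiber $\psi^{-1}(y)$, the restriction of $F$ to that fiber is a translation, hence injective; by AGW, $F$ permutes $\F_{q^2}$ if and only if $\bar F$ permutes $V$.

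The second step is to make $\bar F$ explicit, and here a collapse occurs. Every $y\in V$ satisfies $\Trnn(y)=\Trnn(\delta)$, so, putting $s:=\Trnn(\delta)=\delta+\bar\delta\in\F_q$, we obtain $y^{q}=s-y$ on $V$. Feeding this into $g$ gives $y^{q+4}=(s-y)y^{4}=sy^{4}-y^{5}$, whence $g(y)=sy^{4}$ on $V$. Consequently $g(y)^{q}=s(s-y)^{4}$, and expanding $(s-y)^4=s^4-s^3y-sy^3+y^4$ in characteristic $3$ yields, after the $sy^{4}$ terms cancel,
\[
\bar F(y)=y+s(s-y)^{4}-sy^{4}=-s^{2}y^{3}+(1-s^{4})\,y+s^{5}.
\]
This is an affine $\F_3$-linearized polynomial, so $\bar F$ is a bijection of $V$ exactly when its linear part is injective on the direction space $U$; that is, for $z\in U$ one must have $-s^{2}z^{3}+(1-s^{4})z=0\Rightarrow z=0$.

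The final step, which I expect to be the main obstacle, is to convert this injectivity into the stated square condition. If $s=0$ then $\bar F=\mathrm{id}$ and we are done, so assume $s\neq0$. Dividing by a putative nonzero root $z\in U$ reduces the problem to whether $s^{2}z^{2}=1-s^{4}$ admits a solution $z\in U\setminus\{0\}$. The key arithmetic fact is that any nonzero $z\in U$ has $z^{2}\in\F_q^{*}$ with $(z^{2})^{(q-1)/2}=z^{q-1}=-1$, so $z^2$ is always a \emph{non-square} in $\F_q$; fixing a generator $\theta$ of $U$ over $\F_q$, the values $z^2$ run through $\theta^{2}\cdot(\F_q^{*})^{2}$. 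Since $s^2$ is a square and $\theta^2$ is a non-square, $s^{2}z^{2}=1-s^{4}$ is solvable with $z\neq0$ precisely when $1-s^{4}$ is a nonzero non-square in $\F_q$. Hence no nonzero root exists, equivalently $\bar F$ (and therefore $F$) is a permutation, exactly when $1-s^{4}=1-[\Trnn(\delta)]^{4}$ is $0$ or a square in $\F_{3^m}$, which is the hypothesis. The delicate points to get right are the characteristic-$3$ expansion that produces the clean cubic $\bar F$, and the quadratic-character bookkeeping identifying $z^2$ as a non-square, since this is what turns the solvability of $s^{2}z^{2}=1-s^{4}$ on the line $U$ into a square condition on $1-s^{4}$ itself.
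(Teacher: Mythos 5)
Your proof is correct, but note that the paper itself never proves this statement: Lemma~\ref{L04} is quoted verbatim from \cite[Proposition 10]{LWC} and used as a black box, so there is no in-paper argument to compare against. Judged on its own merits, your argument is sound at every step. The AGW reduction is applied correctly: $\psi(X)=X^{q}-X+\delta$ is surjective onto $V=\delta+U$ with $U=\ker\Trnn$, the identity $\psi\circ F=\bar F\circ\psi$ holds by additivity of $L$, and $F$ restricted to each fiber of $\psi$ is a translation, so $F$ permutes $\F_{3^{2m}}$ if and only if $\bar F(y)=y+L(g(y))$ permutes $V$. The collapse $g(y)=sy^{4}$ on $V$ (from $y^{q}=s-y$), the characteristic-$3$ expansion $(s-y)^{4}=s^{4}-s^{3}y-sy^{3}+y^{4}$ giving $\bar F(y)=-s^{2}y^{3}+(1-s^{4})y+s^{5}$, and the reduction of bijectivity to triviality of the kernel of $z\mapsto -s^{2}z^{3}+(1-s^{4})z$ on $U$ are all verified correctly (the last using that cubing is additive, so differences of points of $V$ pass through the cubic term cleanly). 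The quadratic-character bookkeeping is also right: for $0\neq z\in U$ one has $z^{2}\in\F_{3^m}^{*}$ with $(z^{2})^{(q-1)/2}=z^{q-1}=-1$, so the set $\{z^{2}:z\in U\setminus\{0\}\}$ is exactly the set of non-squares of $\F_{3^m}^{*}$, and hence $s^{2}z^{2}=1-s^{4}$ is solvable with $z\neq 0$ precisely when $1-s^{4}$ is a nonzero non-square. In fact your argument yields more than the cited statement: it characterizes the permutation property as an \emph{if and only if} condition ($F$ permutes $\F_{3^{2m}}$ exactly when $1-[\Trnn(\delta)]^{4}$ is zero or a square in $\F_{3^m}$), which is a clean, self-contained replacement for the external citation.
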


We also recall some results providing us Walsh transform coefficients of some functions, which would be required later in our results. 
\begin{lem} \textup{\cite[Theorem 2]{AC}}
\label{L010}
 Let $K = \F_{2^k}$ and $f(X) = \Tr (X^{2^a +1} + X^{2^b +1})$, $0 \leq a < b$. Furthermore, let $d_1 = \gcd(b-a,k)$, $d_2 =\gcd(b+a,k)$ and $\nu = \max\{v_2(b-a),v_2(b+a)\}$, where $v_2$ is the $2$-valuation, that is, the largest power of $2$ dividing the argument. Also, let $S_{d_i} = \{X \in K : \Tr_{K/\F_{2^{d_i}}} (X) = 0\}$ and $L_i = X+X^{2^{d_i}}+X^{2^{2d_i}}+\cdots+X^{2^{\frac{k}{2}-d_i}}$,  for $i=1,2$. Then we have the following cases$:$\\
\textbf{Case $1$}$:$ $v_2(b-a) = v_2(b+a) = v_2(k)-1$ does not hold. Then$:$
\begin{enumerate}
 \item[$(1)$] $v_2(k) \leq \nu$. Then $\mathcal{W}_f(\alpha)=0$ if $\alpha \not \in S_{d_1} \cap S_{d_2}$.
 \item[$(2)$] $v_2(k) > \nu$. Then  $\mathcal{W}_{f}(\alpha)= 0$ if $(L_1 \circ L_2) \left(\alpha\right)\neq 0$.
\end{enumerate}
\textbf{Case $2$}$:$ $v_2(b-a) = v_2(b+a) = v_2(k)-1$. Then $\mathcal{W}_f(\alpha)=0$ if $(L_1 \circ L_2) \left(\alpha+1\right)\neq 0$.
\end{lem}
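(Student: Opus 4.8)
This is a statement about a quadratic Boolean function, and the plan is to reduce the vanishing of $\mathcal{W}_f(\alpha)$ to the non-solvability of a single linear equation over $K$. Since $\omega=-1$ here, write $\mathcal{W}_f(\alpha)=\sum_{X\in K}(-1)^{f(X)+\Tr(\alpha X)}$. First I would form the polar (bilinear) form
$$B_f(X,Y) = f(X+Y)+f(X)+f(Y).$$
Expanding $(X+Y)^{2^a+1}$ and $(X+Y)^{2^b+1}$ and cancelling the pure terms leaves only the mixed terms, giving
$$B_f(X,Y) = \Tr\big(X^{2^a}Y + XY^{2^a} + X^{2^b}Y + XY^{2^b}\big).$$
Using the invariance $\Tr(Z)=\Tr(Z^{2^j})$ to push all the Frobenius twists onto one variable, this becomes $B_f(X,Y)=\Tr\big(Y\,\Phi(X)\big)$, where
$$\Phi(X) = X^{2^a} + X^{2^{k-a}} + X^{2^b} + X^{2^{k-b}}$$
is a self-adjoint linearized polynomial (exponents read modulo $k$).

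Next I would run the standard squaring argument for quadratic forms. Writing $Z=X+Y$ and using $f(X)+f(X+Y)=f(Y)+B_f(X,Y)$, one obtains
$$\mathcal{W}_f(\alpha)^2 = \sum_{Y\in K}(-1)^{f(Y)+\Tr(\alpha Y)}\sum_{X\in K}(-1)^{\Tr((\Phi(Y)+\alpha)X)} = 2^k\!\!\sum_{\Phi(Y)=\alpha}(-1)^{f(Y)+\Tr(\alpha Y)},$$
because the inner sum vanishes unless $\Phi(Y)=\alpha$. Consequently $\mathcal{W}_f(\alpha)=0$ whenever $\alpha\notin\im(\Phi)$, and the whole lemma reduces to deciding when the linear system $\Phi(Y)=\alpha$ is inconsistent.

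To analyze $\im(\Phi)$ I would factor it. Setting $M(X)=X+X^{2^{b-a}}$, a direct check gives $\Phi(X)=\big(M(X)^{2^{a+b}}+M(X)\big)^{2^{k-b}}$, so $\Phi(X)=0$ forces $M(X)\in\F_{2^{\gcd(a+b,k)}}=\F_{2^{d_2}}$, while the map $M$ itself carries the $\F_{2^{d_1}}$-structure with $d_1=\gcd(b-a,k)$ and $d_2=\gcd(b+a,k)$. This is precisely the point where the kernels $S_{d_i}=\ker\Tr_{K/\F_{2^{d_i}}}$ and the partial-sum maps $L_i$ enter: the membership condition $\alpha\in S_{d_1}\cap S_{d_2}$ and the composite condition $(L_1\circ L_2)(\alpha)\ne 0$ are exactly the (non-)solvability criteria for $\Phi(Y)=\alpha$ in the respective regimes. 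The split into Case~$1$ and Case~$2$, and the subcases indexed by $\nu$, reflect how the $2$-adic valuations $v_2(b-a)$, $v_2(b+a)$ and $v_2(k)$ govern whether the two linear factors of $\Phi$ interact, the exceptional regime $v_2(b-a)=v_2(b+a)=v_2(k)-1$ producing the affine shift $\alpha\mapsto\alpha+1$ that appears in Case~$2$.

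The main obstacle is this final translation: converting the bare image condition $\alpha\in\im(\Phi)$ into the clean trace and composite-map criteria, and in particular carrying out the valuation bookkeeping that separates the three regimes. The factorization of $\Phi$ only exposes the kernel up to understanding when $\gcd(a+b,k)$ and $\gcd(b-a,k)$ align with $v_2(k)$; pinning down the exact cokernel, and identifying the distinguished coset responsible for the $\alpha+1$ shift, is where the delicate case analysis of Case~$2$ cannot be avoided.
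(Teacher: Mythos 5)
This lemma is not proved in the paper at all: it is quoted verbatim from \cite[Theorem 2]{AC}, so there is no internal proof to compare against. Judged on its own terms, your proposal contains a genuine error at its pivot point. In the squaring step, after writing $X=Y+Z$ and polarizing, the frequency $\alpha$ does \emph{not} migrate into the inner character sum: one gets
\begin{equation*}
\mathcal{W}_f(\alpha)^2=\sum_{Z\in K}(-1)^{f(Z)+\Tr(\alpha Z)}\sum_{Y\in K}(-1)^{\Tr(Y\Phi(Z))}
=2^k\sum_{Z\in\ker\Phi}(-1)^{f(Z)+\Tr(\alpha Z)},
\end{equation*}
so the inner sum tests $\Phi(Z)=0$, not $\Phi(Z)=\alpha$; your displayed identity with $\Tr\bigl((\Phi(Y)+\alpha)X\bigr)$ is incorrect, and with it the claimed reduction ``$\mathcal{W}_f(\alpha)=0$ whenever $\alpha\notin\im(\Phi)$.'' The correct criterion is that $\mathcal{W}_f(\alpha)\neq 0$ if and only if the function $Z\mapsto f(Z)+\Tr(\alpha Z)$ vanishes identically on $\ker\Phi$. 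Since $f$ restricted to $\ker\Phi$ is $\F_2$-linear (because $B_f$ vanishes on $\ker\Phi\times\ker\Phi$), there is some $\beta$ with $f(Z)=\Tr(\beta Z)$ on $\ker\Phi$, and self-adjointness of $\Phi$ gives $(\ker\Phi)^{\perp}=\im\Phi$; hence the support of $\mathcal{W}_f$ is the \emph{coset} $\beta+\im\Phi$, not $\im\Phi$ itself. This is precisely the information your reduction destroys: Case $2$ of the lemma is exactly the regime where $f$ does \emph{not} vanish on $\ker\Phi$, the coset is nontrivial, and the support sits inside a translate of $\im\Phi$ --- whence the shift $\alpha\mapsto\alpha+1$. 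Under your (erroneous) reduction the support would always lie in $\im\Phi$, which contradicts Case $2$ of the very statement you are proving, so the framework cannot be repaired by bookkeeping alone.

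Two further remarks. First, even granting the structural analysis (your factorization $\Phi(X)=\bigl(M(X)^{2^{a+b}}+M(X)\bigr)^{2^{k-b}}$ with $M(X)=X+X^{2^{b-a}}$ is correct, and so is the conclusion $\ker\Phi=\{X: M(X)\in\F_{2^{d_2}}\}$), the actual content of the lemma --- converting vanishing on $\ker\Phi$ into the criteria $\alpha\notin S_{d_1}\cap S_{d_2}$ and $(L_1\circ L_2)(\alpha)\neq 0$, separated by the $2$-adic conditions on $b\pm a$ and $k$ --- is exactly what your last paragraph defers, so the proposal is a plan rather than a proof even where it is not wrong. Second, the paper's own Lemma~\ref{walsh} (which it does prove) carries out the same squaring argument correctly: the conclusion there is $\mathcal{W}_f(w)^2=2^n\sum_{Z\in\ker L}(-1)^{f(Z)+\Tr(wZ)}$ followed by the linearity-on-the-kernel dichotomy; comparing your display against that proof makes the error immediately visible.
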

Further, we will also need the following lemma given in~\cite{AC} to evaluate $\mathcal{W}_{f_u}(\alpha)$ for any $\alpha \in K$, where $f_u$ is a more general function defined as
$f_u (x) = \Tr\left(uX^{2^a +1} + uX^{2^b +1}\right)$, with $u \in \F_{2^{d_1}}$, where $d_1 = \gcd(b-a,k)$.

\begin{lem} \textup{\cite[Theorem 3]{AC}}
 \label{L011}
 Let $K = \F_{2^k}$ and $f(X) = \Tr (X^{2^a +1} + X^{2^b +1})$, $0 \leq a < b$.  Furthermore, let $d_1 = \gcd(b-a,k)$, $d_2 =\gcd(b+a,k)$ and $\nu = \max\{v_2(b-a),v_2(b+a)\}$. Also, let $S_{d_i} = \{X \in K : \Tr_{K/\F_{2^{d_i}}} (X) = 0\}$ and $L_i = X+X^{2^{d_i}}+X^{2^{2d_i}}+\cdots+X^{2^{\frac{k}{2}-d_i}}$  for $i=1,2$. Moreover, if $f_u (x) = \Tr\left(uX^{2^a +1} + uX^{2^b +1}\right)$, where $u \in  \F_{2^{d_1}}$, then we have the following cases$:$\\
\textbf{Case $1$}$:$ If $v_2(a) = v_2(b) < v_2(k)$ does not hold, then 
for any $u \in  \F_{2^{d_1}}$ there exists $\beta \in  \F_{2^{d_1}}$ such that $u = \beta^{2^a +1}$ and 
$$
\mathcal{W}_{f_{u}}(\alpha)= \mathcal{W}_f\left(\frac{\alpha}{\beta}\right).
$$ 
\textbf{Case $2$}$:$ If $v_2(a) = v_2(b) < v_2(k)$, then$:$
\begin{enumerate}
 \item[$(1)$] $v_2(k) \leq \nu$. If $\frac{\alpha}{u^{2^{-b}}} \not \in S_{d_1} \cap S_{d_2}$, then $\mathcal{W}_{f_{u}}(\alpha)= 0$.
  \item[$(2)$] $v_2(k) > \nu$. Then  $\mathcal{W}_{f_{u}}(\alpha)= 0$ if $(L_1 \circ L_2) \left(\frac{\alpha}{u^{2^{-b}}}\right)\neq 0$.
\end{enumerate}
\end{lem}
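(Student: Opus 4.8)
The plan is to treat $f_u$ as a quadratic Boolean function and to split the argument according to whether the normalizing substitution $X \mapsto \beta X$ is available; this is exactly the dichotomy recorded in the statement. Throughout I would use that, since $p = 2$, the root of unity is $\omega = -1$, so $\mathcal{W}_{f_u}(\alpha) = \sum_{X \in K}(-1)^{f_u(X) + \Tr(\alpha X)}$, and similarly for $\mathcal{W}_f$.

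For Case~1, I would first establish the arithmetic fact that the hypothesis ``$v_2(a) = v_2(b) < v_2(k)$ does not hold'' forces $\gcd(2^a + 1, 2^{d_1} - 1) = 1$. Using the valuation criterion $\gcd(2^s + 1, 2^t - 1) = 1 \iff v_2(t) \le v_2(s)$ together with $v_2(d_1) = \min\{v_2(b - a), v_2(k)\}$, one checks the two subcases: if $v_2(a) \ne v_2(b)$ then $v_2(b - a) = \min\{v_2(a), v_2(b)\} \le v_2(a)$, and if $v_2(a) = v_2(b) \ge v_2(k)$ then $v_2(d_1) = v_2(k) \le v_2(a)$; in both, $v_2(d_1) \le v_2(a)$. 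Hence $X \mapsto X^{2^a + 1}$ permutes $\F_{2^{d_1}}$ and there is a unique $\beta \in \F_{2^{d_1}}$ with $u = \beta^{2^a + 1}$. Because $2^{d_1} - 1 \mid 2^{b - a} - 1$, every $\beta \in \F_{2^{d_1}}$ also satisfies $\beta^{2^b + 1} = \beta^{2^a + 1} = u$, so $f_u(X) = \Tr\big((\beta X)^{2^a + 1} + (\beta X)^{2^b + 1}\big) = f(\beta X)$. Substituting $X \mapsto X/\beta$ in the Walsh sum then gives $\mathcal{W}_{f_u}(\alpha) = \mathcal{W}_f(\alpha/\beta)$, which is the assertion of Case~1.

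For Case~2, when $v_2(a) = v_2(b) < v_2(k)$ the element $u$ need not be a $(2^a + 1)$-th power in $\F_{2^{d_1}}$, so the clean substitution is unavailable and I would argue through the polar form. Writing $B_{f_u}(X, Y) = f_u(X + Y) + f_u(X) + f_u(Y)$ and using $\Tr(Z) = \Tr(Z^{2^{-j}})$, one gets $B_{f_u}(X, Y) = \Tr\big(Y L_u(X)\big)$ with the linearized polynomial
\[
L_u(X) = u X^{2^a} + u^{2^{-a}} X^{2^{-a}} + u X^{2^b} + u^{2^{-b}} X^{2^{-b}}.
\]
The standard evaluation of a quadratic form yields $|\mathcal{W}_{f_u}(\alpha)|^2 = 2^k \sum_{Z \in \ker L_u}(-1)^{f_u(Z) + \Tr(\alpha Z)}$, so $\mathcal{W}_{f_u}(\alpha) = 0$ unless $X \mapsto \Tr(\alpha X)$ agrees on $\ker L_u$ with the (necessarily linear) restriction $f_u|_{\ker L_u}$; this is a single linear condition on $\alpha$ attached to the radical $\ker L_u$. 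Now $u \in \F_{2^{d_1}}$ and $d_1 \mid (b - a)$ force $u^{2^a} = u^{2^b}$ and $u^{2^{-a}} = u^{2^{-b}}$, so $L_u$ is obtained from the $u = 1$ map $L_1(X) = X^{2^a} + X^{2^{-a}} + X^{2^b} + X^{2^{-b}}$ by composing with multiplication-by-$u$ and a Frobenius twist. Tracking this normalization (which is what clears the negative exponents and produces the factor $u^{2^{-b}}$) shows that the vanishing condition for $\mathcal{W}_{f_u}(\alpha)$ is precisely the vanishing condition of Lemma~\ref{L010} for $\mathcal{W}_f$ evaluated at the twisted point $\alpha / u^{2^{-b}}$; splitting on $v_2(k) \le \nu$ versus $v_2(k) > \nu$ reproduces the two displayed subcases via the sets $S_{d_1} \cap S_{d_2}$ and the composite $L_1 \circ L_2$ already recorded there.

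I expect the main obstacle to be this last bookkeeping in Case~2: verifying that $\ker L_u$ is exactly the $u$-scaled, Frobenius-twisted image of $\ker L_1$, so that the explicit descriptions via $S_{d_i}$ and $L_1 \circ L_2$ transport with the precise twist $u^{2^{-b}}$ and no extraneous factor. The arithmetic step $\gcd(2^a + 1, 2^{d_1} - 1) = 1$ in Case~1, though routine, must also be handled with care, since it is exactly the dividing line between the two cases.
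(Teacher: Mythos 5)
A remark before the substance: the paper never proves Lemma~\ref{L011} at all --- it is quoted from Co\c{s}gun \cite[Theorem 3]{AC} --- so there is no internal proof to measure your attempt against, and it must be judged on its own merits. Your Case~1 is correct and is the argument one expects: in both subcases of ``$v_2(a)=v_2(b)<v_2(k)$ fails'' you correctly obtain $v_2(d_1)\le v_2(a)$, hence $\gcd(2^a+1,2^{d_1}-1)=1$, so $x\mapsto x^{2^a+1}$ permutes $\F_{2^{d_1}}$; and since $\beta\in\F_{2^{d_1}}$ and $d_1\mid b-a$ force $\beta^{2^b+1}=\beta^{2^a+1}=u$, the identity $f_u(X)=f(\beta X)$ and the substitution give $\mathcal{W}_{f_u}(\alpha)=\mathcal{W}_f(\alpha/\beta)$ (for $u\neq 0$).

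Case~2, however, contains a genuine gap, and it sits exactly at the step you defer as ``bookkeeping'': the claim that $\ker L_u$ is a scaled, Frobenius-twisted copy of the $u=1$ kernel --- equivalently, that the vanishing criterion for $\mathcal{W}_{f_u}$ is the Lemma~\ref{L010} criterion for $\mathcal{W}_f$ read at $\alpha/u^{2^{-b}}$ --- is false, not merely unverified. Take $k=8$, $a=1$, $b=5$ (so $v_2(a)=v_2(b)=0<3=v_2(k)$, $d_1=4$, $d_2=2$) and let $u$ be a non-cube of $\F_{2^4}^{*}$; such $u$ exist because $\gcd(2^a+1,2^{d_1}-1)=3$, and they are precisely the $u$ that the Case~1 normalization cannot reach. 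Writing $Y=X+X^{2^4}$, your $L_u$ becomes $L_u(X)=uY^{2}+u^{2^3}Y^{2^3}$. For $u=1$ this equals $(Y+Y^{2^2})^{2}$, whose kernel $\{X:Y\in\F_4\}$ has $\F_2$-dimension $6$; for a non-cube $u$, a nonzero $Y$ in the kernel would satisfy $u^{7}=Y^{-6}$, impossible because $Y^{-6}$ is a cube of $\F_{2^4}^{*}$ while $u^{7}$ is not, so $\ker L_u=\{X:Y=0\}=\F_{2^4}$ has dimension $4$. Kernels of different dimensions cannot be matched by multiplications and Frobenius twists, which preserve dimension, so $f_u$ is genuinely inequivalent to $f$ here. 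Concretely, $f_u$ vanishes on $\ker L_u=\F_{2^4}$ (for $Z\in\F_{2^4}$ one has $Z^{2^5+1}=Z^{2^1+1}$, so $f_u(Z)=\Tr(uZ^3+uZ^3)=0$), hence $\mathcal{W}_{f_u}(\alpha)=\pm 2^{6}\neq 0$ exactly for the $16$ elements $\alpha\in\F_{2^4}$; whereas your transported criterion --- Lemma~\ref{L010}, Case~1(2), applied at $\alpha/u^{2^{-b}}$ --- would confine the support to the $4$-element set $u^{2^{-b}}\ker(L_1\circ L_2)=u^{2^{-b}}\F_4$. The failure is structural, not notational: Case~2 is precisely the regime in which $u$ need not be a $(2^a+1)$-st power, and then the kernel equation, which after your substitution reads $u^{2^b}Y^{2^{a+b}}=uY$, has a solution space that depends on the class of $u$ modulo $(2^a+1)$-st powers; it must be analyzed directly, with $u$ in it, which is what the source \cite{AC} does. (Note also that this same example contradicts Case~2(2) exactly as transcribed in the paper --- all of $S_{d_i}$, $L_i$ are well defined here --- so any repair should start from the hypotheses and notation of Co\c{s}gun's original theorem rather than from this restatement.)
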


 The following lemma can be gleaned from the proof of ~\cite[Proposition 2]{TH}.
\begin{lem}\label{owalsh} 
Let $m$ be a positive integer, $n=2m$ and $p$ an odd prime.  Then the absolute square of the Walsh transform coefficient at $v \in \F_{p^n}$ of the function $f (X) = \Tr\left( \sum_{i=0}^{m} a_i X^{p^i+1}\right)$, $a_i \in \F_{p^n}$,  is given by
 \begin{equation*} \lvert \mathcal{W}_f(v) \rvert^2 =
  \begin{cases}
   p^{n+\ell} &~\mbox{if}~f(X)+\Tr(-vX)\equiv0~\text{on Ker}~(L)  \\
    0 &~\mbox{otherwise},
  \end{cases}
 \end{equation*}
 where $\ell$ is dimension of kernel of the linearized polynomial $L(X) = \sum_{i=0}^{m} \left(a_i X^{p^i} +(a_i X)^{p^{n-i}}\right).$ 
\end{lem}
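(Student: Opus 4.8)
The plan is to treat $f$ as a quadratic form over $\F_p$ and to reduce $\lvert\mathcal{W}_f(v)\rvert^2$ to a character sum over the kernel of its associated bilinear form. First I would write
$$\lvert\mathcal{W}_f(v)\rvert^2 = \sum_{X,Y \in \F_{p^n}} \omega^{f(X)-f(Y)-\Tr(v(X-Y))}$$
and substitute $X = Y+Z$, so that the exponent becomes $f(Y+Z)-f(Y) - \Tr(vZ)$. Using $(Y+Z)^{p^i+1} = (Y^{p^i}+Z^{p^i})(Y+Z)$, a direct expansion gives $f(Y+Z)-f(Y) = f(Z) + B(Y,Z)$, where $B(Y,Z) = \Tr\bigl(\sum_{i=0}^m a_i(Y^{p^i}Z + YZ^{p^i})\bigr)$ is $\F_p$-bilinear.

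The key manipulation is to express $B(Y,Z)$ as $\Tr(Y\,L(Z))$ for a single $\F_p$-linear map $L$. Using the Frobenius-invariance $\Tr(W)=\Tr(W^{p^{n-i}})$ together with $Y^{p^n}=Y$, each term $\Tr(a_i Y^{p^i}Z)$ rewrites as $\Tr\bigl((a_iZ)^{p^{n-i}}Y\bigr)$, while $\Tr(a_iYZ^{p^i})$ is already of the form $\Tr(a_iZ^{p^i}\cdot Y)$; summing identifies exactly the linearized polynomial $L(Z)=\sum_{i=0}^m\bigl(a_iZ^{p^i}+(a_iZ)^{p^{n-i}}\bigr)$ of the statement. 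Then the inner sum $\sum_{Y}\omega^{\Tr(YL(Z))}$ collapses by orthogonality of the additive characters to $p^n$ when $L(Z)=0$ and to $0$ otherwise, yielding
$$\lvert\mathcal{W}_f(v)\rvert^2 = p^n\sum_{Z \in \ker L} \omega^{f(Z)-\Tr(vZ)}.$$

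Finally I would analyze the remaining sum over the $\F_p$-subspace $\ker L$, which has dimension $\ell$. Writing $g(Z)=f(Z)-\Tr(vZ)$, bilinearity of $B$ gives $g(Z_1+Z_2)-g(Z_1)-g(Z_2)=\Tr(Z_1 L(Z_2))=0$ for $Z_1,Z_2\in\ker L$, so $g$ restricts to a group homomorphism $\ker L \to \F_p$. Hence the character sum is a clean dichotomy: it equals $p^\ell$ if $g\equiv 0$ on $\ker L$ (that is, $f(X)+\Tr(-vX)\equiv 0$ there) and $0$ otherwise, since a nontrivial homomorphism onto $\F_p$ has equidistributed fibers and $\sum_{t\in\F_p}\omega^t=0$. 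Multiplying by the factor $p^n$ gives $p^{n+\ell}$ or $0$, as claimed.

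I expect the main obstacle to be the bookkeeping in the second step: correctly carrying the trace identities so that both halves of the bilinear form fuse into the single stated polynomial $L$, with careful attention to the boundary indices $i=0$ (which produces $2a_0Z$) and $i=m$ (where $p^{n-i}=p^m$). One should also note that in odd characteristic the relevant structure on $\ker L$ is the additivity of $g$ rather than $\F_p$-linearity of $f$; indeed scaling by $\lambda\in\F_p$ gives $f(\lambda Z)=\lambda^2 f(Z)$, and reconciling this with additivity forces $f\equiv 0$ on $\ker L$, which is consistent with---but not needed beyond---the homomorphism argument above.
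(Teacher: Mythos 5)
Your proposal is correct and is essentially the paper's own argument: the paper proves the even-characteristic twin of this statement, Lemma~\ref{walsh}, by exactly this route (square the transform, substitute $X=Y+Z$, rewrite the bilinear part as $\Tr(YL(Z))$, collapse the inner sum by orthogonality, then use the zero-or-onto dichotomy on $\mathrm{Ker}(L)$), while Lemma~\ref{owalsh} itself is only cited from the proof of \cite[Proposition 2]{TH}. Your adaptations for odd $p$ --- the complex conjugate in $\lvert\mathcal{W}_f(v)\rvert^2$ and working with additivity of $g$ (a group homomorphism into $\F_p$) rather than linearity on the kernel --- are exactly what is needed, so there is no gap.
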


For our first theorem, we will be using the following lemma that we will now prove. The result for the case of finite fields with odd characteristic is already mentioned in the above Lemma~\ref{owalsh}. We show that this result also holds in the case of $p=2$ and we add its proof here for completeness.

\begin{lem} \label{walsh} Let $m$ be a positive integer and $n=2m$.  Then the square of the Walsh transform coefficient at $w \in \F_{2^n}$  of the function $f (X)=  \Tr\left( \sum_{i=0}^{m} a_i X^{2^i+1}\right)$,  $a_i \in \F_{2^n}$, is given by
 \begin{equation*} \mathcal{W}_f(w) ^2 =
  \begin{cases}
   2^{n+\ell} &~\mbox{if}~f(X)+\Tr(wX)\equiv0~\text{on Ker}~(L)  \\
    0 &~\mbox{otherwise},
  \end{cases}
 \end{equation*}
 where $\ell$ is dimension of kernel of the linearized polynomial $L(X) = \sum_{i=0}^{m} (a_i X^{2^i} +a_i^{2^{n-i}} X^{2^{n-i}}).$ 
\end{lem}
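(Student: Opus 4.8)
The plan is to treat $f$ as a quadratic form over $\F_2$ and to compute $\mathcal{W}_f(w)^2$ by the standard ``square and linearize'' method. Since $p=2$ we have $\omega=-1$, and because $-\Tr(wX)=\Tr(wX)$ in characteristic two,
\[
\mathcal{W}_f(w)^2=\sum_{X,Y\in\F_{2^n}}(-1)^{f(X)+f(Y)+\Tr(w(X+Y))}.
\]
First I would substitute $Y=X+Z$ and sum over the new variables $X,Z$, which turns the double sum into
\[
\mathcal{W}_f(w)^2=\sum_{Z\in\F_{2^n}}(-1)^{f(Z)+\Tr(wZ)}\sum_{X\in\F_{2^n}}(-1)^{f(X+Z)+f(X)+f(Z)},
\]
reducing everything to an analysis of the (second-order vanishing) derivative $f(X+Z)+f(X)+f(Z)$.

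Next I would expand each term $(X+Z)^{2^i+1}=(X^{2^i}+Z^{2^i})(X+Z)$ and cancel the ``pure'' pieces $a_iX^{2^i+1}$ and $a_iZ^{2^i+1}$, obtaining
\[
f(X+Z)+f(X)+f(Z)=\Tr\Big(\sum_{i=0}^m a_i\big(X^{2^i}Z+XZ^{2^i}\big)\Big).
\]
The key bookkeeping step is to move the Frobenius off $X$ in the first family of terms: using $\Tr(y)=\Tr(y^{2^{n-i}})$ gives $\Tr(a_iZ\,X^{2^i})=\Tr(a_i^{2^{n-i}}Z^{2^{n-i}}X)$, so the whole expression collapses to $\Tr(X\,L(Z))$ with $L(Z)=\sum_{i=0}^m\big(a_iZ^{2^i}+a_i^{2^{n-i}}Z^{2^{n-i}}\big)$, exactly the linearized polynomial in the statement. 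The inner sum over $X$ is then the character sum of the linear functional $X\mapsto\Tr(X\,L(Z))$, which is $2^n$ when $L(Z)=0$ and $0$ otherwise; hence $\mathcal{W}_f(w)^2=2^n\sum_{Z\in\ker L}(-1)^{f(Z)+\Tr(wZ)}$.

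Finally I would show that the exponent $g(Z):=f(Z)+\Tr(wZ)$ is $\F_2$-linear on $\ker L$. This is immediate from the derivative identity: for $Z_1,Z_2\in\ker L$ we have $f(Z_1+Z_2)=f(Z_1)+f(Z_2)+\Tr(Z_1 L(Z_2))=f(Z_1)+f(Z_2)$, and $\Tr(w\cdot)$ is already additive, so $g$ is a homomorphism from the $\ell$-dimensional $\F_2$-space $\ker L$ to $\F_2$. Such a homomorphism has character sum $2^\ell$ if it is identically zero and $0$ otherwise; this yields $\mathcal{W}_f(w)^2=2^{n+\ell}$ precisely when $g\equiv 0$ on $\ker L$, i.e.\ when $f(X)+\Tr(wX)\equiv 0$ on $\ker L$, and $0$ in all other cases, as claimed. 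I expect the only delicate point to be the Frobenius-index bookkeeping that produces $L$ in exactly the stated symmetric form (and the fact that $m=\tfrac n2$ lets the two families of exponents $2^i$ and $2^{n-i}$ fit together consistently); the reduction to a linear character sum on $\ker L$ is then routine.
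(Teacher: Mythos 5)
Your proposal is correct and follows essentially the same route as the paper's proof: the substitution $Y=X+Z$ in the squared Walsh sum, the identity $f(X+Z)+f(X)+f(Z)=\Tr\left(X\,L(Z)\right)$ obtained by expanding $(X+Z)^{2^i+1}$ and shifting the Frobenius via $\Tr(y)=\Tr\bigl(y^{2^{n-i}}\bigr)$, the collapse of the inner sum to $2^n$ on $\mathrm{Ker}(L)$, and the final observation that $f(Z)+\Tr(wZ)$ is linear on $\mathrm{Ker}(L)$ so its character sum is $2^\ell$ or $0$. Your spelling-out of why $f(Z)+\Tr(wZ)$ is additive on the kernel (via the derivative identity with $L(Z_2)=0$) is a slightly more explicit version of the paper's closing linearity argument, but the proof is the same.
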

\begin{proof}
 We can easily write the square of the Walsh transform coefficient of the function $f : X \mapsto \Tr\left( \sum_{i=0}^{m} a_i X^{2^i+1}\right)$ at $w$ as 
 \allowdisplaybreaks\begin{align*}
  \mathcal{W}_f(w) ^2 & = \sum_{X,Y} (-1)^{f(X)+\Tr(wX)} (-1)^{f(Y)+\Tr(wY)} \\
  & = \sum_{Y,Z} (-1)^{f(Y+Z)+\Tr(w(Y+Z))} (-1)^{f(Y)+\Tr(wY)} \ (\text{where} \ X=Y+Z) \\
  & = \sum_{Z} (-1)^{f(Z)+\Tr(wZ)} \sum_{Y} (-1)^{f(Y)+f(Z)+f(Y+Z)}.
 \end{align*}
 
 We first simplify $f(Y)+f(Z)+f(Y+Z)$ as follows:
 \allowdisplaybreaks\begin{align*}
  f(Y)+f(Z)+f(Y+Z) & = \Tr\left( \sum_{i=0}^{m} a_i \left(Y^{2^i+1}+Z^{2^i+1}+(Y+Z)^{2^i+1}\right)\right)\\
  & = \Tr\left( \sum_{i=0}^{m} a_i \left(YZ^{2^i}+ZY^{2^i}\right)\right)\\
   & = \Tr\left( Y \sum_{i=0}^{m} \left(a_i Z^{2^i}+ (a_i Z)^{2^{n-i}} \right)\right)\\
   & = \Tr(YL(Z)),
 \end{align*}
where $L(Z)=\sum_{i=0}^{m} (a_i Z^{2^i}+ a_i^{2^{n-i}} Z^{2^{n-i}})$ (from Lemma~\ref{walsh}) is the linearized polynomial over $\F_{2^n}$ with kernel of dimension $l$, denoted by Ker($L$). This will give us 
\allowdisplaybreaks\begin{align*}
  \mathcal{W}_f(w) ^2 & = \sum_{Z} (-1)^{f(Z)+\Tr(wZ)} \sum_{Y} (-1)^{\Tr(YL(Z))}\\
  & = 2^n \sum_{Z \in Ker(L)} (-1)^{f(Z)+\Tr(wZ)}.
 \end{align*}
The above equality holds because for those $Z \not \in$ Ker ($L$), $YL(Z)$ forms a permutation over $\F_{2^n}$ making the inner sum in the square of the Walsh transform zero. To proceed further, we consider $\F_{2^n}$ as an $n$-dimensional vector space over $\F_{2}$ and hence $L(Z)$ as a linear transformation of $\F_{2^n}$. As $f(Y)+f(Z)+f(Y+Z)=\Tr(YL(Z))$, we get that $f(Z)+ \Tr(wZ)$ is linear on the kernel of $L$. This implies that either $f(Z)+ \Tr(wZ)$ is identically zero on Ker($L$), or $f(Z)+ \Tr(wZ)$ is an onto map on Ker($L$). Hence, the claim is shown.
\end{proof}

Next, we recall the general technique given in~\cite{StanicaPG2021} using the expression for the number of solutions to a given equation over finite fields in terms of Weil sums. The authors used this technique to compute the $c$-DDT entries. Let $\chi_1: \F_q \rightarrow \mathbb{C}$ be the canonical additive character of the additive group of $\F_q$ defined as follows
\[
 \chi_1(X):= \exp \left(\frac{2\pi i \Tr(X)}{p} \right).
\]
One can easily observe (see, for instance~\cite{PSweil}) that number of solutions $(X_1, X_2, \ldots, X_n) \in \F_q^n$ of the equation $F(X_1, X_2, \ldots, X_n)=b,$ denoted by $N(b)$, is given by 
\begin{equation}\label{ddtw}
 \begin{split}
 N(b)= \frac{1}{q} \sum_{X_1,X_2, \ldots, X_n \in \F_q} \sum_{\beta \in \F_q} \chi_1(\beta(F(X_1, X_2, \ldots, X_n)-b)).
 \end{split}
\end{equation}
We will be using the above expression to calculate the $c$-differential uniformity of a few permutations over finite fields in the forthcoming sections.

\section{Permutations over $\F_{2^n}$ with low $c$-differential uniformity}\label{S3}

 We first consider the $c$-differential uniformity of the function $F(X)=(X^{2^m}+X+\delta)^{2^{2m-2}+2^{m-2}+1}+X$ over $\F_{2^n}$, where $n=2m$ and $\delta \in \F_{2^n}$. From Lemma~\ref{L02}, we know that $F$ is a permutation polynomial over $\F_{2^n}$. In the following theorem, we give conditions on $\delta$ and $c$ for which $F$ turns out to be either a P$c$N or an AP$c$N function.

\begin{thm} \label{T1}
Let $F(X)=(X^{2^m}+X+\delta)^{2^{2m-2}+2^{m-2}+1}+X$ over $\F_{2^{n}}$, where $n=2m$.
\begin{enumerate}
 \item[$(1)$] Let $\delta\in\F_{2^m}$. Then $F$ is P$c$N for $c \in \F_{2^m}\setminus \{1\}$. Moreover, $F$ is AP$c$N for $c \in \F_{2^n}\setminus \F_{2^m}$.
\item[$(2)$] Let $\delta \in \F_{2^n}\setminus \F_{2^m}$ with $\Trtm(\delta) = \Trm(1)$ and $p_m((\delta + \bar \delta)^{-1}) \neq 0$, where $p_m(X)$ is the polynomial defined in Lemma~\textup{\ref{L01}}. Then $F$ is P$c$N, if $c \in \F_{2^m}\setminus \{1\}$ and is of $c$-differential uniformity less than or equal to $4$, if $c \in \F_{2^n}\setminus \F_{2^m}$.
\end{enumerate}
\end{thm}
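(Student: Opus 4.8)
The plan is to compute the $c$-DDT entries directly by counting solutions $X \in \F_{2^n}$ of the equation
\[
{}_c\Delta_F(X,a) = F(X+a) - cF(X) = b.
\]
Writing $F(X)=(X^{2^m}+X+\delta)^{d}+X$ with exponent $d=2^{2m-2}+2^{m-2}+1$, I would substitute $Y = X^{2^m}+X+\delta$ so that $Y$ ranges over an affine subspace as $X$ varies; note that $X\mapsto X^{2^m}+X$ is $\F_{2^n}\to\F_{2^m}$-valued (its image is the kernel-complement governed by the trace $\Trnn$), which will be the structural fact driving the split between $\delta\in\F_{2^m}$ and $\delta\notin\F_{2^m}$. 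The first step is to expand $F(X+a)-cF(X)$ and collect the equation into the form $(\text{stuff})^d - c(\text{stuff})^d + (1-c)X = b$, clearing the exponent $d$ by raising to a suitable power or by exploiting that $d$ is the exponent of a known low-degree permutation on the relevant subfield.

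\textbf{Key simplification of the exponent.}

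Next I would analyze the exponent $d=2^{2m-2}+2^{m-2}+1$ modulo $2^n-1$. The main leverage is that on $\F_{2^m}$ the map $Z\mapsto Z^{d}$ collapses to a much simpler power, and more generally $4d = 2^{2m}+2^m+4 \equiv 2^m+5 \pmod{2^n-1}$ (using $2^{2m}\equiv 1$), so cubing or taking fourth powers linearizes the relation. The goal is to reduce the defining equation to a cubic (or a linear-plus-cubic) equation in a single variable over $\F_{2^m}$ or $\F_{2^n}$, at which point Lemma~\ref{L01} applies to count roots of $X^3+X+a=0$. When $c\in\F_{2^m}$ the factor $(1-c)$ lies in $\F_{2^m}^*$ and the whole equation should descend to $\F_{2^m}$, yielding exactly one solution (P$c$N); when $c\in\F_{2^n}\setminus\F_{2^m}$ the Frobenius-twist $\bar c = c^{2^m}\ne c$ introduces a genuinely $\F_{2^n}$-quadratic obstruction, and I expect the count to jump to at most $2$ in case (1) and at most $4$ in case (2).

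\textbf{Carrying out the two regimes.}

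For part (1), with $\delta\in\F_{2^m}$, the substitution keeps everything symmetric under $X\mapsto X^{2^m}$, and I would show the transformed equation becomes linear in $X$ over $\F_{2^m}$ for $c\in\F_{2^m}\setminus\{1\}$, forcing a unique solution; for $c\notin\F_{2^m}$ I would separate $X$ into its $\F_{2^m}$-coordinates (via a basis $\{1,\theta\}$ of $\F_{2^n}/\F_{2^m}$) and reduce to a system whose solution count is governed by whether a derived quadratic in $\F_{2^m}$ is solvable, giving at most $2$. For part (2), with $\delta\notin\F_{2^m}$ and the conditions $\Trtm(\delta)=\Trm(1)$, $p_m((\delta+\bar\delta)^{-1})\ne 0$, the element $\delta+\bar\delta\in\F_{2^m}^*$ enters the constant term of the resulting cubic, and here Lemma~\ref{L01}(2) together with the hypothesis $p_m(\cdot)\ne 0$ is exactly what caps the number of cubic roots, so that combined with the linear part the total solution count is at most $4$.

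\textbf{Anticipated main obstacle.}

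The hard part will be controlling the interaction between the exponent-reduction step and the Frobenius conjugate $\bar c$ in the $c\notin\F_{2^m}$ regime: after clearing $d$ the equation mixes $X$, $X^{2^m}$, $c$ and $\bar c$, and I expect it will not collapse to a single clean cubic but rather to a pair of coupled equations over $\F_{2^m}$. Showing that this coupled system has at most $2$ (part 1) or at most $4$ (part 2) solutions—rather than merely bounding it crudely—will require carefully tracking which coefficients vanish, and this is where the precise hypotheses on $\delta$ (via $p_m$ and the trace condition) must be invoked through Lemma~\ref{L01} to rule out the extra roots.
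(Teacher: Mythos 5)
Your proposal is essentially correct and can be carried through to a complete proof, but it takes a genuinely different route from the paper's. The paper never eliminates $X$: it converts each $c$-DDT entry into a character sum via Equation~\eqref{ddtw}, splits the sum over $\beta$ according to whether $\Trnn((1+c)\beta)$ vanishes, reduces the resulting piece $S_0$ to a cubic handled by Lemma~\ref{L01}, and kills the piece $S_1$ by proving that the Walsh coefficients of the associated Gold-type functions vanish, using Lemmas~\ref{L010}, \ref{L011} and~\ref{walsh} together with a case analysis on $m$ odd, $m\equiv 0\pmod 4$, $m\equiv 2\pmod 4$. Your elimination strategy bypasses all of that: solving the differential equation for $X$ (legitimate since $c\neq 1$) and imposing the consistency condition $X^{2^m}+X=Y+\delta$, with $Y=\delta+Z$, $Z\in\F_{2^m}$, one uses $Z^{2^{2m-2}}=Z^{2^{m-2}}$ to expand $Y^d$ into the terms $Z^{2^{m-1}+1}$, $Z^{2^{m-2}+1}$, $Z^{2^{m-1}}$, $Z^{2^{m-2}}$, $Z$. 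Moreover, the obstacle you anticipate dissolves: when $X\mapsto X^{2^m}+X$ is applied to the solved expression, the Gold-type terms $Z^{2^{m-1}+1}$ and $Z^{2^{m-2}+1}$ carry coefficient $(1+c)^{-1}(1+c)=1$ in one conjugate summand and $(1+c)^{-2^m}(1+c^{2^m})=1$ in the other, so they cancel in characteristic $2$. What remains is a single affine linearized equation in $Z$ over $\F_{2^m}$, not a coupled nonlinear system; its solution count is $0$ or the size of the kernel of its linear part, and raising the kernel equation to the fourth power factors it as $Z$ times a cubic, which is exactly where Lemma~\ref{L01} and the hypotheses on $\delta$ enter, as you predicted. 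Your route is more elementary and avoids the Walsh-transform machinery and the $m \bmod 4$ case distinctions entirely; the paper's route produces the explicit Walsh and Weil sum evaluations it advertises as of independent interest, and sets up the template it reuses for Theorems~\ref{T3}, \ref{T5} and~\ref{T6}.

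Two points need care in the execution. First, in part (2) with $c\in\F_{2^m}\setminus\{1\}$ you must show the cubic has \emph{no} roots in $\F_{2^m}$ (so the kernel is trivial and every entry is exactly $1$), not merely that its roots are ``capped''; this uses both hypotheses: $\Trtm(\delta)=\Trm(1)$ gives $\Trm(\Trnn(\delta)+1)=0$, which rules out the unique-root case of Lemma~\ref{L01}(1), while $p_m((\delta+\bar\delta)^{-1})\neq 0$ rules out the three-root case of Lemma~\ref{L01}(2). (For $c\in\F_{2^n}\setminus\F_{2^m}$ in part (2), the bound $\leq 4$ then follows from the kernel having at most $4$ elements, since its fourth power is a polynomial of degree $4$.) Second, AP$c$N in part (1) asserts uniformity \emph{exactly} $2$: besides the upper bound, which follows because $\gcd(2^{m-1}-1,2^m-1)=1$ forces the relevant kernel to have size at most $2$, you must exhibit an entry equal to $2$; any $a$ with $a^{2^m}+a\neq 0$ (outside one exceptional value of $a^{2^m}+a$ for which the kernel degenerates) gives a kernel of size exactly $2$, and choosing $b$ so that the affine equation is consistent produces two solutions.
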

\begin{proof}
Clearly, by expanding the given trinomial, one can easily simplify $F(X)$ and gets the below expression
\allowdisplaybreaks\begin{align*}
F(X) & = \Trnn(X^{2^{m-1}+1} +X^{2^{2m-1}+1}) +\delta \Trnn(X^{2^{m-1}}) +(\delta^{2^{m-2}+1}+\delta^{2^{2m-2}+1}) \Trnn(X^{2^{m-2}}) \\
& \qquad + \Trnn(\delta^{2^{m-2}}) \Trnn(X^{2^{m-2}+1}+X^{2^{2m-2}+1}) + \delta^{2^{m-2}+2^{2m-2}} \Trnn(X) \\
& \qquad \qquad \qquad + X + \delta^{2^{2m-2}+2^{m-2}+1}.
\end{align*}
Recall that for any $(a, b) \in \F_{2^{n}} \times \F_{2^{n}}$,
the $c$-DDT entry $_c\Delta_F(a, b)$ is given by the number of solutions $X \in \F_{2^{n}}$ of the following equation
\begin{equation}\label{eq1}
 F(X+a)+cF(X)=b,
\end{equation}
which is the same as
\allowdisplaybreaks\begin{align*}
 & (1+c)F(X)+ \Trnn((a^{2^{m-1}}+a^{2^{2m-1}})X+a(X^{2^{m-1}}+X^{2^{2m-1}})) +F(a)+ \delta^{2^{2m-2}+2^{m-2}+1} \\
 &  \qquad+ \Trnn(\delta^{2^{m-2}}) \Trnn((a^{2^{m-2}}+a^{2^{2m-2}})X+a(X^{2^{m-2}}+X^{2^{2m-2}})) = b.
\end{align*}
 Now, by using Equation~\eqref{ddtw}, the number of solutions $X \in \F_{2^n}$ of the above equation, that is, $_c\Delta_F(a,b)$, is given by 
\allowdisplaybreaks\begin{align*}
2^n \  _c\Delta_F(a,b) & = 
 \sum_{\beta \in \F_{2^n}} \sum_{X \in \F_{2^n}} (-1)^{\displaystyle{\Tr\left(\beta(1+c)F(X)\right)}} \\
 & \qquad (-1)^{\displaystyle{\Tr\left(\beta\left(\Trnn\left(\left(a^{2^{m-1}}+a^{2^{2m-1}}\right)X+a\left(X^{2^{m-1}}+X^{2^{2m-1}}\right)\right)+b\right)\right)}} \\
 & \qquad (-1)^{\displaystyle{\Tr\left(\beta\left(\Trnn\left(\delta^{2^{m-2}}\right) \Trnn\left(\left(a^{2^{m-2}}+a^{2^{2m-2}}\right)X+a\left(X^{2^{m-2}}+X^{2^{2m-2}}\right)\right)\right)\right)}}\\
& \qquad \qquad \qquad(-1)^{\displaystyle{\Tr\left(\beta\left(F(a)+ \delta^{2^{2m-2}+2^{m-2}+1} \right)\right)}},
\end{align*}
that is,
\begin{equation*}
 \begin{split}
 2^n \ _c\Delta_F(a,b) & =  \sum_{\beta \in \F_{2^n}} (-1)^{\displaystyle{\Tr\left(\beta\left(F(a)+b+\delta^{2^{2m-2}+2^{m-2}+1} \right)\right)}} \sum_{X \in \F_{2^n}} (-1)^{\displaystyle{\Tr\left(\beta(1+c)F(X)\right)}} \\
  &   (-1)^{\displaystyle{\Tr\left(\beta\left(\Trnn\left(\left(a^{2^{m-1}}+a^{2^{2m-1}}\right)X+a\left(X^{2^{m-1}}+X^{2^{2m-1}}\right)\right)\right)\right)}} \\
  & \quad(-1)^{\displaystyle{\Tr\left(\beta\left( \Trnn\left(\delta^{2^{m-2}}\right) \Trnn\left(\left(a^{2^{m-2}}+a^{2^{2m-2}}\right)X+a\left(X^{2^{m-2}}+X^{2^{2m-2}}\right)\right)\right)\right)}}.
 \end{split}
\end{equation*}
We now use the following notations
\allowdisplaybreaks\begin{align*}
 T_0 & = \Tr\left(\beta(1+c)F(X)\right) \\
 T_1 & = \Tr\left(\beta\left( \Trnn\left(\left(a^{2^{m-1}}+a^{2^{2m-1}}\right)X+a\left(X^{2^{m-1}}+X^{2^{2m-1}}\right)\right)\right)\right) \\
 & \qquad \qquad +\Tr\left(\beta\left(\Trnn\left(\delta^{2^{m-2}}\right) \Trnn\left(\left(a^{2^{m-2}}+a^{2^{2m-2}}\right)X+a\left(X^{2^{m-2}}+X^{2^{2m-2}}\right)\right)\right)\right).
\end{align*}
Therefore,
\begin{equation}
\label{eq2}
 _c\Delta_F(a,b) = \frac{1}{2^n} \sum_{\beta \in \F_{2^n}} (-1)^{\displaystyle{\Tr\left(\beta\left(F(a)+b+\delta^{2^{2m-2}+2^{m-2}+1}\right)\right)}}\sum_{X \in \F_{2^n}} (-1)^{\displaystyle{T_0+T_1}}.
\end{equation}

\textbf{Case 1.} Let $c\in \F_{2^n}$ and $\delta \in \F_{2^m}$. To compute $T_0$ and $T_1$, we first write
\allowdisplaybreaks
\begin{equation*} 
\begin{split}
T_1 & = \Tr\left(\beta\left( \Trnn\left(\left(a^{2^{m-1}}+a^{2^{2m-1}}\right)X+a\left(X^{2^{m-1}}+X^{2^{2m-1}}\right)\right)\right)\right) \\
& = \Tr\left(\left(\Trnn(a^{2^{m-1}}\right)\Trnn(\beta) + \Trnn\left(a^{2}\right)\Trnn\left(\beta^2\right)X\right)
\end{split}
\end{equation*}
and
 \allowdisplaybreaks 
 \begin{equation*} 
\begin{split}
T_0 & = \Tr(\beta(1+c)F(X)) \\
& = \Tr\left(\beta(1+c)\left(\Trnn\left(X^{2^{m-1}+1} +X^{2^{2m-1}+1}\right) +\delta \Trnn\left(X^{2^{m-1}}\right) + X + \delta^{2^{m-1}+1}\right) \right)\\
& \qquad \qquad + \Tr\left(\beta(1+c) \delta^{2^{m-1}} \Trnn(X) \right)\\
& = \Tr(\beta(1+c)\delta^{2^{m-1}+1})+ \Tr\left(\Trnn\left(\beta(1+c)\right)\left(X^{2^{m-1}+1} +X^{2^{2m-1}}\right)\right) \\
& \qquad + \Tr\left((\beta(1+c) + \Trnn\left( \delta^2\beta^2 (1+c)^2 \right)+ \Trnn(\delta^{2^{m-1}}\beta(1+c))) X\right).
\end{split}
\end{equation*}
Now Equation~\eqref{eq2} reduces to
\allowdisplaybreaks\begin{align*}
 _c\Delta_F(a,b) & = \frac{1}{2^n} \sum_{\beta \in \F_{2^n}} (-1)^{\displaystyle{\Tr\left(\beta(F(a)+b+c\delta^{2^{m-1}+1})\right)}} \\
 & \qquad \qquad \sum_{X \in \F_{2^n}} (-1)^{\displaystyle{\Tr\left(u(X^{2^{m-1}+1}+X^{2^{2m-1}+1})+wX\right)}},
\end{align*}
where
\allowdisplaybreaks
\begin{align*}
 u & = \Trnn((1+c)\beta) \\
 w & =  \Trnn\left(a^{2^{m-1}}\right)\Trnn(\beta) + \Trnn(a^{2})\Trnn\left(\beta^2\right)\\
 & \qquad +\beta(1+c) + \delta^2\Trnn\left((1+c)^2 \beta^2\right) + \delta^{2^{m-1}}\Trnn\left((1+c)\beta\right).
\end{align*}
Further, splitting the above sum depending on whether $\Trnn((1+c)\beta)$ is~$0$ or not, we get
\allowdisplaybreaks
\begin{align*}
 2^n \ _c\Delta_F(a,b) & = \sum_{\substack{\beta \in \F_{2^n} \\ \Trnn(\beta(1+c))=0}} (-1)^{\displaystyle{\Tr\left(\beta\left(F(a)+b+c\delta^{2^{m-1}+1}\right)\right)}} \sum_{X \in \F_{2^n}} (-1)^{\displaystyle{\Tr\left(\left(\beta(1+c)\right)X\right)}} \\
 & \qquad \qquad (-1)^{\displaystyle{\Tr\left(\left(\Trnn\left(a^{2^{m-1}}\right)\Trnn(\beta) + \Trnn(a^{2})\Trnn(\beta^2)\right)X \right)}}\\
 & +  \sum_{\substack{\beta \in \F_{2^n} \\ \Trnn(\beta(1+c))\neq 0}} (-1)^{\displaystyle{\Tr\left(\beta(F(a)+b+c\delta^{2^{m-1}+1})\right)}}\\
 & \qquad \qquad \qquad \sum_{X \in \F_{2^n}} (-1)^{\displaystyle{\Tr\left(u(X^{2^{m-1}+1}+X^{2^{2m-1}+1})+wX\right)}}\\
 &= S_0 + S_1,
\end{align*}
where $S_0,S_1$ are the two inner sums. We first consider the sum $S_0$ below,
\allowdisplaybreaks
\begin{align*}
 S_0 & = \sum_{\substack{\beta \in \F_{2^n} \\ \Trnn(\beta(1+c))=0}} (-1)^{\displaystyle{\Tr\left(\beta\left(F(a)+b+c\delta^{2^{m-1}+1}\right)\right)}}
\sum_{X \in \F_{2^n}} (-1)^{\displaystyle{\Tr\left(\left(\beta(1+c)\right)X\right)}} \\
& \qquad \qquad \qquad (-1)^{\displaystyle{\Tr\left( \left(\Trnn\left(a^{2^{m-1}}\right)\Trnn(\beta) + \Trnn(a^{2})\Trnn(\beta^2)\right)X\right)}}.
\end{align*}
To compute $S_0$, we need to find the number of solutions of the following equation in $\F_{2^n}$,
\begin{equation}\label{eq}
 (1+c)\beta + \Trnn(a^{2^{m-1}})\Trnn(\beta) + \Trnn(a^{2})\Trnn(\beta^2)=0.
\end{equation}
If $c \in \F_{2^m} \setminus \{1\}$, then   $\Trnn((1+c)\beta)=(1+c)\Trnn(\beta)=0$ and hence  Equation~\eqref{eq} reduces to $(1+c)\beta=0$. Thus, the inner sum in $S_0$ is zero for all $\beta \in \F_{2^n}^{*}$ and so $S_0=2^n$.

Next, we let $c \in \F_{2^n}\setminus \F_{2^m}$. Then we observe that for $\beta$'s satisfying $\Trnn((1+c)\beta)=0$, we have $\beta^{2^m}= \tilde c \beta$, where $\tilde c = (1-c)^{1-2^m}$. If $\Trnn(a)=0$, then Equation~\eqref{eq} vanishes only for $\beta=0$. Now, we assume that $\Trnn(a) \neq 0$. Then, after substituting the value of $\beta^{2^m}$ in Equation~\eqref{eq}, it follows that  Equation~\eqref{eq} can have at most two solutions, namely, $\beta_1=0$ and $\beta_2 = \dfrac{ \Trnn(a^{2^{m-1}})(1+\tilde c)+(1+c)}{\Trnn(a^2)(1+\tilde c)^2}$, which is clearly nonzero for some $a \in \F_{2^n}$ with $\Trnn(a^{2^{m-1}}) \neq \dfrac{1+c}{1+\tilde c}$. Hence, we get that the inner sum in $S_0$ is zero for all $\beta \in \F_{2^n} \setminus \{\beta_1,\beta_2\}$. Thus, for $(a,b) = (a, F(a) + c\delta^{2^{m-1}+1})$ along with $\Trnn(a^{2^{m-1}}) \neq \dfrac{1+c}{1+\tilde c}$, we have $S_0 = 2^{n+1}$;  otherwise, $S_0 = 2^n$.

Next, we consider
\allowdisplaybreaks
\begin{align*}
 S_1 & = \sum_{\substack{\beta \in \F_{2^n} \\ \Trnn((1+c)\beta)\neq 0}} (-1)^{\displaystyle{\Tr\left(\beta\left(F(a)+b+c\delta^{2^{m-1}+1}\right)\right)}} \\
& \qquad \qquad \sum_{X \in \F_{2^n}} (-1)^{\displaystyle{\Tr\left(u\left(X^{2^{m-1}+1}+X^{2^{2m-1}+1}\right)+wX\right)}} \\
 & = \sum_{\substack{\beta \in \F_{2^n} \\ \Trnn((1+c)\beta)\neq 0}} (-1)^{\displaystyle{\Tr\left(\beta\left(F(a)+b+c\delta^{2^{m-1}+1}\right)\right)}}
\sum_{X \in \F_{2^n}} \mathcal{W}_{f_u}(w),
\end{align*}
where  $\mathcal{W}_{f_u}(w)$ is the Walsh transform of the function $f_u (X)=   \Tr(u(X^{2^{m-1}+1}+X^{2^{2m-1}+1}))$ at $w$.  
We split our analysis into three subcases depending on the value of $m$. Note that we have $k=2m$, $a =m-1$ and $b=2m-1$.

\textbf{Subcase 1(a).} Let $m$ be odd. Then $\gcd(b-a,k)=m$ and $\gcd(b+a,k)=1$. Also, notice that $v_2(a)= v_2(m-1)$, $v_2(b) = v_2(2m-1) = 0$ and $v_2(k)=v_2(2m)=1$. Summarizing, we observe that $v_2(a) = v_2(b) \leq v_2(k)$ does not hold. From Lemma~\ref{L011}, one can see that $\mathcal{W}_{f_{u}}(w)= \mathcal{W}_f(\frac{w}{\eta})$, where $f(X) =  \Tr(X^{2^{m-1}+1}+X^{2^{2m-1}+1})$ and $\eta \in \F_{2^m}$ such that $u = \eta^{2^{m-1}+1}$. We now show that $\mathcal{W}_f(\frac{w}{\eta})=0$ using Lemma~\ref{L010}. One can clearly observe that $0=v_2(b-a)=v_2(b+a)=v_2(k)-1$. Hence, it is sufficient to show $(L_1 \circ L_2)(\frac{w}{\eta}+1)\neq 0.$ Now,
\[
(L_1 \circ L_2)(X) = X+ X^2 +X^{2^2} + \cdots + X^{2^{m-1}}.
\]
Hence, 
\allowdisplaybreaks\begin{align*}
 (L_1 \circ L_2)\left(\frac{w}{\eta}+1\right) & =m+ \frac{\beta(1+c)}{\eta} + \left(\frac{\beta(1+c)}{\eta}\right)^2 +\cdots +\left(\frac{\beta(1+c)}{\eta}\right)^{2^{m-1}}\\
 & + \Trm\left( \frac{\Trnn(a^{2^{m-1}})\Trnn(\beta) + \Trnn(a)^2\Trnn(\beta)^2}{\eta}\right) \\
 & +\Trm\left( \frac{\delta^2(1+c)^2 \Trnn(\beta)^2 + \delta^{2^{m-1}}(1+c)\Trnn(\beta)}{\eta}\right).
\end{align*}
Now, if $(L_1 \circ L_2)\left(\frac{w}{\eta}+1\right)=0$, then we have $\left((L_1 \circ L_2)\left(\frac{w}{\eta}+1\right)\right)^2=0$. This will give us 
\begin{equation*}
\begin{split}
  (L_1 \circ L_2)\left(\frac{w}{\eta}+1\right)+ \left((L_1 \circ L_2)\left(\frac{w}{\eta}+1\right)\right)^2 & =\frac{1}{\eta} \Trnn(\beta(1+c)) +m +m^2\\
  & =\frac{u}{\eta} + m+m^2 = 0.
  \end{split}
\end{equation*}
As $u^2 = \eta^3$, we have $u = (m+m^2)^3 \equiv 0 \pmod 2$, which gives us $\Trnn((1+c)\beta) = 0$. This is obviously not true, and  the claim is shown.

\textbf{Subcase 1(b).} Let $m \equiv 0 \pmod 4$. If we have $v_2(m)=v ( \geq 2)$, then $v_2(a)= v_2(b) = 0$ and $v_2(k)=v_2(2m)=v+1$. Summarizing, we have $v_2(a)= v_2(b) < v_2(k)$ and $v_2(k) > \nu$. Thus, by using Lemma~\ref{L011} if we show that $L_1 \circ L_2 \left(\frac{w}{u^2}\right) \neq 0$ then we have $\mathcal{W}_G(w)=0$. Therefore, $S_1 =0$ and the claim is proved. 

Our goal is to now show $L_1 \circ L_2 \left(\frac{w}{u^2}\right)  \neq 0$. As $d_1 =m$ and $d_2=2$, we have 
\[
(L_1 \circ L_2)(X) = X+ X^{2^2} +X^{2^{(2\cdot2)}} + X^{2^{(3\cdot2)}}+ \cdots + X^{2^{m-2}}.
\]
Now, if $(L_1 \circ L_2)\left(\frac{w}{u^2}\right)=0$, then we have $\left((L_1 \circ L_2)\left(\frac{w}{u^2}\right)\right)^{2^2}=0$. This will give us 
\begin{equation*}
  (L_1 \circ L_2)\left(\frac{w}{u^2}\right) + \left((L_1 \circ L_2)\left(\frac{w}{u^2}\right)\right)^{2^2} = \Trnn\left(\frac{w}{u^2}\right)= \frac{1}{u^2} \Trnn(w)  =0,
\end{equation*}
which is a contradiction to the assumption that $\Trnn(\beta(1+c))\neq0$.

\textbf{Subcase 1(c).} Let $m \equiv 2 \pmod 4$, then $d_1=m$ and $d_2= 4$. In this subcase, we have $v_2(a)= v_2(b) = 0$ and $v_2(k)=v_2(2m)=2$. Summarizing all, we observe that $v_2(a) = v_2(b) < v_2(k)$ holds and $v_2(k) \leq \nu$. Again using Lemma~\ref{L011}, if we show that $ \frac{w}{u^2} \not \in S_{d_1} \cap S_{d_2}$, then we are done. In this subcase, $S_{d_1} = \{X \in \F_{2^n} : \Trnn(X) = 0\}$ and $ S_{d_2} = \{X \in \F_{2^n} : \Trfn(X) = 0\}$.
Let $ \frac{w}{u^2} \in S_{d_1} \cap S_{d_2}$. Then we have $\Trnn\left( \frac{w}{u^2}\right) = 0$, which implies that $\Trnn(\beta(1+c)) = 0$, which is not possible. Hence, $\mathcal{W}_G(w)=0$, which renders $S_1 =0$.

This above analysis shows the claim that $F$ is P$c$N for $c \in \F_{2^m}\setminus \{1\}$ and AP$c$N for $c \in \F_{2^n}\setminus \F_{2^m}$.

\textbf{Case 2.} Let $\delta \in \F_{2^n} \setminus  \F_{2^m}$ with $\Trtm(\delta) = \Trm(1)$ and $p_m((\delta + \bar \delta)^{-1}) \neq 0$. Then $T_0,T_1$ become
\allowdisplaybreaks
\begin{align*}
T_1 & = \Tr(\beta( \Trnn((a^{2^{m-1}}+a^{2^{2m-1}})X+a(X^{2^{m-1}}+X^{2^{2m-1}})))) \\
 & \qquad \qquad +\Tr(\beta(\Trnn(\delta^{2^{m-2}}) \Trnn((a^{2^{m-2}}+a^{2^{2m-2}})X+a(X^{2^{m-2}}+X^{2^{2m-2}}))))\\
& = \Tr((\Trnn(a^{2^{m-1}})\Trnn(\beta) + \Trnn(a^2)\Trnn(\beta^2))X)\\
& \qquad \qquad + \Tr((\Trnn(\delta^{2^{m-2}})\Trnn(a^{2^{m-2}})\Trnn(\beta) + \Trnn(\delta)\Trnn(a^{2^2})\Trnn(\beta)^{2^2})X),
\end{align*}
and
\allowdisplaybreaks
\begin{align*}
T_0 & = \Tr(\beta(1+c)F(X)) \\
& = \Tr\left(\beta(1+c)(\Trnn(X^{2^{m-1}+1} +X^{2^{2m-1}+1}) +\delta \Trnn(X^{2^{m-1}}) + X + \delta^{2^{2m-2}+2^{m-2}+1}) \right)\\
& \quad + \Tr\left(\beta(1+c) (\delta^{2^{m-2}+2^{2m-2}} \Trnn(X) + \Trnn(\delta^{2^{m-2}}) \Trnn(X^{2^{m-2}+1}+X^{2^{2m-2}+1}) ) \right)\\
& \qquad \qquad \qquad +\Tr\left(\beta(1+c) (\delta^{2^{m-2}+1}+\delta^{2^{2m-2}+1}) \Trnn(X^{2^{m-2}})\right)\\
& = \Tr\left( \Trnn(\beta(1+c)) (X^{2^{m-1}+1}+ \Trnn(\delta^{2^{m-2}}) X^{2^{m-2}+1}) \right)\\
& \qquad + \Tr\left(\Trnn(\beta(1+c))^2 X^{2^{1}+1} +  \Trnn(\delta)\Trnn(\beta(1+c))^{2^2} X^{2^{2}+1}))\right) \\
& \qquad \qquad + \Tr\left((\Trnn(\delta^2 \beta^2(1+c)^2) + \beta(1+c) + \Trnn(\beta(1+c)(\delta^{2^{m-2}+2^{2m-2}})))X\right)\\
& \qquad + \Tr\left(\Trnn(\beta^{2^2}(1+c)^{2^2}(\delta^{2^{m-2}+1}+\delta^{2^{2m-2}+1})^{2^2}) X\right) + \Tr\left(\beta(1+c)\delta^{2^{2m-2}+2^{m-2}+1}\right).
\end{align*}
Thus, we can rewrite Equation~\eqref{eq2} as
\begin{equation*}
\begin{split}
 _c\Delta_F(a,b) & = \frac{1}{2^n} \sum_{\beta \in \F_{2^n}} (-1)^{\displaystyle{\Tr\left(\beta(F(a)+b+c\delta^{2^{2m-2}+2^{m-2}+1})\right)}} \\
 & \sum_{X \in \F_{2^n}} (-1)^{\displaystyle{\Tr(u_1X^{2^{m-1}+1}+u_2 X^{2^{1}+1} +u_3 X^{2^{m-2}+1}+u_4 X^{2^{2}+1}+wX)}},
\end{split}
\end{equation*}
where 
\allowdisplaybreaks\begin{align*}
 u_1 & = \Trnn((1+c)\beta) \\
 u_2 & = \Trnn((1+c)\beta)^2 = u_1^2\\
 u_3 & = \Trnn(\delta^{2^{m-2}}) \Trnn((1+c)\beta) \\
 u_4 & = \Trnn(\delta) \Trnn((1+c)\beta)^{2^2} = u_3^{2^2}\\
 w & = \Trnn(\delta^2 \beta^2(1+c)^2) + \beta(1+c) + \Trnn(\beta(1+c)(\delta^{2^{m-2}+2^{2m-2}})) \\ 
 & +\Trnn(\beta^{2^2}(1+c)^{2^2}(\delta^{2^{m-2}+1}+\delta^{2^{2m-2}+1})^{2^2} )+ \Trnn(\delta)\Trnn(a^{2^2})\Trnn(\beta)^{2^2} \\
 & +\Trnn(a^{2^{m-1}})\Trnn(\beta)  + \Trnn(a^2)\Trnn(\beta^2)+\Trnn(\delta^{2^{m-2}})\Trnn(a^{2^{m-2}})\Trnn(\beta).
\end{align*}

We now split our analysis into two cases and define $S_0$ and $S_1$ depending on whether $\Trnn((1+c)\beta)=0$ or not, respectively. We first compute $S_0$ as follows:
\allowdisplaybreaks
\begin{align*}
 S_0 & = \sum_{\substack{\beta \in \F_{2^n} \\ \Trnn(\beta(1+c))=0}} (-1)^{\displaystyle{\Tr\left(\beta(F(a)+b+c\delta^{2^{2m-2}+2^{m-2}+1})\right)}}
\sum_{X \in \F_{2^n}} (-1)^{\displaystyle{\Tr\left(\left(\beta(1+c)\right)X\right)}} \\
&  (-1)^{\displaystyle{\Tr\left( (\Trnn(\beta^{2^2}(1+c)^{2^2}(\delta^{2^{m-2}+1}+\delta^{2^{2m-2}+1})^{2^2}) + \Trnn(a^{2^{m-1}})\Trnn(\beta))X\right)}}\\
&  (-1)^{\displaystyle{\Tr\left((\Trnn(\delta^{2^{m-2}})\Trnn(a^{2^{m-2}})\Trnn(\beta) + \Trnn(\delta)\Trnn(a^{2^2})\Trnn(\beta)^{2^2})X\right)}} \\
&  (-1)^{\displaystyle{\Tr\left((\Trnn(\delta^2 \beta^2(1+c)^2) + \Trnn(\beta(1+c)(\delta^{2^{m-2}+2^{2m-2}}))+ \Trnn(a^2)\Trnn(\beta^2))X\right)}}.
\end{align*}

If $c \in \F_{2^m}$, then $\Trnn((1+c)\beta) =0$ implies $\Trnn(\beta)=0$ and thus $\beta^{2^m} =\beta$. This would further reduce $S_0$ as follows:
\allowdisplaybreaks
\begin{align*}
 S_0 & = \sum_{\substack{\beta \in \F_{2^n} \\ \Trnn(\beta(1+c))=0}} (-1)^{\displaystyle{\Tr\left(\beta(F(a)+b+c\delta^{2^{2m-2}+2^{m-2}+1})\right)}}
\sum_{X \in \F_{2^n}} (-1)^{\displaystyle{\Tr\left(\left(\beta(1+c)\right)X\right)}} \\
& \qquad (-1)^{\displaystyle{\Tr\left( (\Trnn((\delta^{2^{m-2}+1}+\delta^{2^{2m-2}+1})^{2^2} )\beta^{2^2}(1+c)^{2^2}+ \Trnn(\delta^2)\beta^2(1+c)^2)X\right)}}.
\end{align*}
To compute $S_0$, we need to find the number of solutions $   \beta\in\F_{2^n}$ of the following equation 
$$
\Trnn\left((\delta^{2^{m-2}+1}+\delta^{2^{2m-2}+1})^{2^2} \right)(1+c)^{4}\beta^{4}+ \Trnn(\delta^2)(1+c)^2\beta^2+(1+c) \beta =0,
$$
or equivalently, to find the number of solutions $\beta\in\F_{2^m}^{*}$ of the equation given below,
$$
\Trnn\left(\delta^{2^{m}+2^2}+\delta^{1+2^2}\right)(1+c)^{3}\beta^{3}+ \Trnn(\delta^2)(1+c)\beta+1 =0.
$$
Further, multiplying the above equation by $\Trnn(\delta)$ and using $Z = \Trnn(\delta^2)(1+c)\beta$, one can  rewrite the above equation as follows,
$$
Z^3+ \Trnn(\delta^{2^{m-1}})^2 Z+\Trnn(\delta^{2^{m-1}})^2=0.
$$
Substituting $Z$ with $\Trnn(\delta^{2^{m-1}})Z$ in the above equation, we obtain $Z^3+ Z+\Trnn(\delta^{2^{m-1}})^{-1}=0.$ From Lemma~\ref{L01}, the cubic equation $Z^3+ Z+\Trnn(\delta^{2^{m-1}})^{-1}=0$ cannot have a unique solution because $\Trm( \Trnn(\delta^{2^{m-1}})+1)=\Trm( \Trnn(\delta)+1)=\Trn(\delta) + \Trm(1)=0$. Let $p_m(\Trnn(\delta^{2^{m-1}})^{-1})=0$. Then the cubic equation has three distinct solutions in $\F_{2^n}$ and hence the cubic equation $Z^3+ Z+\Trnn(\delta)^{-1}=0$ has three distinct solutions in $ \F_{2^n}$, implying that $p_m((\delta+\bar \delta)^{-1})=0$, which contradicts the given assumption. Hence $S_0=2^n$.  

Let $c \in \F_{2^n} \setminus \F_{2^m}$. Using a similar technique as above, we are led to finding the number of solutions $\beta \in \F_{2^n}$ of the following equation,
\begin{equation*}
 \begin{split}
  &  \Trnn(\beta^{2^2}(1+c)^{2^2}(\delta^{2^{m-2}+1}+\delta^{2^{2m-2}+1})^{2^2}) + \Trnn(a^{2^{m-1}})\Trnn(\beta) + \Trnn(a^2)\Trnn(\beta^2) \\
  & + \Trnn(\delta^{2^{m-2}})\Trnn(a^{2^{m-2}})\Trnn(\beta) + \Trnn(\delta)\Trnn(a^{2^2})\Trnn(\beta)^{2^2} + (1+c)\beta \\
  & \qquad \qquad  + \Trnn(\delta^2 \beta^2(1+c)^2) + \Trnn(\beta(1+c)(\delta^{2^{m-2}+2^{2m-2}})) =0.
 \end{split}
\end{equation*}

Further, we reduce the above equation by substituting $\beta^{2^m} = \dfrac{(1+c)}{(1+c)^{2^m}} \beta = \tilde c \beta$ , where $\tilde c = \dfrac{(1+c)}{(1+c)^{2^m}}$, to get 
\allowdisplaybreaks
\begin{align*}
  &  (1+c)^4 \Trnn(\delta^{2^m+2^2} +\delta^{2^2+1}) \beta^4 + (1+ \tilde c) \Trnn(a^{2^{m-1}}) \beta+ (1+ \tilde c)^2 \Trnn(a^{2}) \beta^2 \\
  & \quad + (1+ \tilde c) \Trnn(\delta^{2^{m-2}}) \Trnn(a^{2^{m-2}}) \beta + (1+ \tilde c)^4 \Trnn(\delta) \Trnn(a^{2^2}) \beta^4 +(1+c) \beta \\
  &\quad + (1+c)^2 \Trnn(\delta)^2 \beta^2 = 0.
 \end{align*}
Combining the coefficients of $\beta^i$ for $i =1,2$ and $4$, we rewrite the above equation in the following simplified way, $A(1+c)^4\beta^4 + B (1+c)^2\beta^2 + C(1+c)\beta=0$, where 
\begin{align*}
 A & = \Trnn(\delta)^5 +\frac{(1+ \tilde c)^4}{(1+c)^4} \Trnn(\delta) \Trnn(a^{2^2}),\\
 B & =\frac{(1+ \tilde c)^2}{(1+c)^2} \Trnn(a^{2}) + \Trnn(\delta)^2, \\
 C & = \frac{(1+ \tilde c)}{(1+c)} \Trnn(a^{2^{m-1}}) + \frac{(1+ \tilde c)}{(1+c)} \Trnn(\delta^{2^{m-2}}) \Trnn(a^{2^{m-2}})  + 1.
\end{align*}
Notice that $A = \Trnn(\delta) B^2$, and hence 
substituting $u=(1+c)\beta$, we obtain the equation $\Trnn(\delta) B^2 u^4 + B u^2 + C u=0$. Thus, it is sufficient to consider the solutions of the cubic equation $\Trnn(\delta) B^2 u^3 + B u + C =0$. Observe that $B$ and $C$ are in $\F_{2^m}$, which implies that the cubic equation $\Trnn(\delta) B^2 u^3 + B u + C =0$ is over $\F_{2^m}$. Hence our goal is to find the number of solutions of the above cubic equation in $\F_{2^m}$. Multiplying the cubic equation by $\Trnn(\delta^2) B$ and substituting $Z= (\Trnn(\delta)B) u$, we get $Z^3 + \Trnn(\delta) B Z + \Trnn(\delta)^2 BC =0$, or equivalently
$$
Z^3 + \Trnn(\delta^{2^{m-1}})^2 (B')^2 Z + \Trnn(\delta)^2 BC =0,
$$ 
where $(B')^2 = B$. Now replacing $Z$ by $\Trnn(\delta^{2^{m-1}})B'Z$, we have 
$$
 Z^3 +  Z + \frac{\Trnn(\delta)^2 BC}{\Trnn(\delta^{2^{m-1}})^3 (B')^3} =0,
$$
which simplifies to
\begin{equation} 
\label{new1}
Z^3 +  Z + \frac{\Trnn(\delta)^{2^{m-1}} C}{B'} =0.
\end{equation}
Notice that for $a \in \F_{2^m}$, the above equation reduces to $Z^3 +  Z + \frac{\Trnn(\delta)^{2^{m-1}}}{\Trnn(\delta)} =0$. If the reduced equation has three solutions in $\F_{2^m}$ then $Z^3 +  Z + \frac{1}{\Trnn(\delta)} =0$ also has three distinct solutions which is not true as $p_m((\delta + \bar \delta)^{-1}) \neq 0$. Also, $\Trm( \Trnn(\delta)+1) \neq 1$. Thus, for $a \in \F_{2^m}$, there are no solutions to Equation~\eqref{new1} in $\F_{2^m}$. If $a \not \in \F_{2^m}$, then we may have at most three solutions to Equation~\eqref{new1}. Consequently, we have $S_0 \leq 2^{n+2}$.
We now consider $S_1$, and write
\allowdisplaybreaks
\begin{align*}
 S_1 & = \sum_{\substack{\beta \in \F_{2^n} \\ \Trnn((1+c)\beta)\neq 0}} (-1)^{\displaystyle{\Tr(\beta(F(a)+b+c\delta^{2^{2m-2}+2^{m-2}+1}))}} \\
& \qquad \qquad \sum_{X \in \F_{2^n}} (-1)^{\displaystyle{\Tr(u_1X^{2^{m-1}+1}+u_2 X^{2^{1}+1} +u_3 X^{2^{m-2}+1}+u_4 X^{2^{2}+1}+wX)}}\\
 & = \sum_{\substack{\beta \in \F_{2^n} \\ \Trnn((1+c)\beta)\neq 0}} (-1)^{\displaystyle{\Tr(\beta(F(a)+b+c\delta^{2^{2m-2}+2^{m-2}+1}))}}
\sum_{X \in \F_{2^n}} \mathcal{W}_G(w),
\end{align*}
where $ \mathcal{W}_G(w)$ is the Walsh transform of the trace of function $G(X) = u_1X^{2^{m-1}+1}+u_2 X^{2^{1}+1} +u_3 X^{2^{m-2}+1}+u_4 X^{2^{2}+1}$ at $w$. From Lemma~\ref{walsh}, the square of the Walsh transform coefficient of $G$ is given by
 \begin{equation*} 
 \mathcal{W}_G(w) ^2 =
  \begin{cases}
   2^{n+\ell} &~\mbox{if}~G(X)+\Tr(wX)\equiv0~\mbox{on Ker}~(L)  \\
    0 &~\mbox{otherwise},
  \end{cases}
 \end{equation*}
 where $\ell$ is dimension of the kernel of the linearized polynomial 
 $$
 L(X)=u_1(X^{2^m}+X)^{2^{m-1}}+u_2(X^{2^m}+X)^2+u_3(X^{2^m}+X)^{2^{m-2}}+u_4(X^{2^m}+X)^{2^2}.
 $$ 
It is straightforward to see that $\F_{2^m} \subseteq$ Ker$(L) $. Therefore, if we can show that $G(X)+\Tr(wX) \neq 0$ for all $X \in \F_{2^m}$, then $S_1=0$. We shall now make efforts to prove that 
$$
G(X)+\Tr(wX)=u_1X^{2^{m-1}+1}+u_2 X^{2^{1}+1} +u_3 X^{2^{m-2}+1}+u_4 X^{2^{2}+1}+\Tr(wX)
$$ 
is not identically zero on $\F_{2^m}$. Observe that for $X \in \F_{2^m}$, $G(X) +\Tr(wX)$ reduces to the following polynomial over $\F_{2^m}$,
$$
u_1X^{2^{m-1}+1}+u_2 X^{2^{1}+1} +u_3 X^{2^{m-2}+1}+u_4 X^{2^{2}+1}+\Trnn(\beta(1+c))X+\cdots+(\Trnn(\beta(1+c))X)^{2^{m-1}}.
$$
If $m=1$ then  $G(X) +\Tr(wX)=0$ implies that $\Trnn(\beta(1+c))=0$, which is not possible. 
For $m \geq 2$, the degree of $G(X) +\Tr(wX)$ is strictly less than that of $2^m-1$. Hence, the claim is shown.
\end{proof}

We now give an example depicting the above Theorem~\ref{T1}.
\begin{exmp}
 Consider $F(X)=(X^{8}+X+\delta)^{19}+X$, a permutation polynomial over $\F_{2^{6}}$, where $\F_{2^{6}}^{*}=\langle g \rangle $ with $g$ as a primitive element of $\F_{2^{6}}$ and $\delta=g^{54} \in \F_{2^3}$. Then $F(X)=(X^{8}+X+g^{54})^{19}+X$ is P$c$N for all $c \in \{0,g^9,g^{18},g^{27},g^{36},g^{45},g^{54}\}=S$ (say) and AP$c$N for all $c \in \F_{2^6} \setminus \{1, S\}$. This illustrates the first assertion in Theorem~\textup{\ref{T1}}.
 
 However for the second assertion, let $\delta=g^{43} \in \F_{2^6} \setminus \F_{2^3}$, where $\Tr_{1}^{6} (g^{43})=1$ and $p_m((\delta + \bar \delta)^{-1}) \neq 0$. Our experiments reveal that $F(X)=(X^{8}+X+g^{43})^{19}+X$ is P$c$N for all $c$ in $S$. Otherwise, the $c$-differential uniformity of $F$ is at most four for all $c \in \F_{2^6} \setminus \{1,S\}$. For example, when $c=g^{20}$, $F$ has $c$-differentially uniformity four.
\end{exmp}

Next, we discuss the $c$-differential uniformity of $F(X) = (X^{2^m}+X+\delta)^{3\cdot 2^{2m-2}+2^{m-2}}+X$ for some fixed values of $c$ and $\delta$.
\begin{thm} \label{T3}
Let $F(X)=(X^{2^m}+X+\delta)^{3 \cdot 2^{2m-2}+2^{m-2}}+X$ over $\F_{2^{n}}$, where $n=2m$ and $m \not\equiv 0 \pmod 3$. Then $F$ is P$c$N for all $c \in \F_{2^m} \setminus \{1\}$ and $\delta \in \F_{2^n}$.
\end{thm}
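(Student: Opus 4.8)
\medskip

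The plan is to adapt the machinery of the proof of Theorem~\ref{T1}, taking advantage of the fact that here only the regime $c\in\F_{2^m}\setminus\{1\}$ (aiming at P$c$N) must be treated, which is considerably cleaner than the case $c\notin\F_{2^m}$.

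\emph{Simplification of $F$.} First I would rewrite $F$ in a quadratic-plus-linear trace form. Put $t:=X^{2^m}+X=\Trnn(X)\in\F_{2^m}$, so that $X^{2^m}+X+\delta=t+\delta$ and, since $t^{2^m}=t$, one has $t^{2^{2m-1}}=t^{2^{m-1}}$ and $t^{2^{2m-2}}=t^{2^{m-2}}$. Writing $d=3\cdot2^{2m-2}+2^{m-2}=2^{2m-1}+2^{2m-2}+2^{m-2}$, the power $(t+\delta)^d=(t^{2^{m-1}}+p)(t^{2^{m-2}}+q)(t^{2^{m-2}}+r)$, with $p=\delta^{2^{2m-1}},\,q=\delta^{2^{2m-2}},\,r=\delta^{2^{m-2}}$, expands so that its leading term collapses via $t^{2^{m-1}}\cdot t^{2^{m-2}}\cdot t^{2^{m-2}}=t^{2^m}=t$, giving
\[
F(X)=(q+r)\,t^{2^{m-1}+2^{m-2}}+(qr+p)\,t^{2^{m-1}}+p(q+r)\,t^{2^{m-2}}+t+pqr+X,
\]
a single quadratic trace-monomial together with $\F_2$-linear terms. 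The reductions $q+r=\Trnn(\delta^{2^{m-2}})\in\F_{2^m}$ and $qr\in\F_{2^m}$ are recorded for later bookkeeping.

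\emph{Weil-sum set-up.} Next I would express ${}_c\Delta_F(a,b)$ as the number of $X$ solving $F(X+a)+cF(X)=b$, apply the identity~\eqref{ddtw}, and organize the exponent into $T_0=\Tr(\beta(1+c)F(X))$ (carrying the quadratic part) and the $a$-dependent linear cross terms $T_1$. Splitting the outer sum over $\beta$ according to whether $\Trnn((1+c)\beta)$ vanishes yields ${}_c\Delta_F(a,b)=2^{-n}(S_0+S_1)$; since $c\in\F_{2^m}\setminus\{1\}$ gives $\Trnn((1+c)\beta)=(1+c)\Trnn(\beta)$, the set defining $S_0$ is exactly $\{\beta\in\F_{2^m}\}$. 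For $S_1$ (where $\Trnn((1+c)\beta)\neq0$) the inner sum is a Walsh coefficient $\mathcal W_G(w)$, and Lemma~\ref{walsh} makes it nonzero only if $\Tr(G(X))+\Tr(wX)\equiv0$ on $\mathrm{Ker}(L)$. On $\F_{2^m}\subseteq\mathrm{Ker}(L)$ one has $X^{2^{m-1}}+X^{2^{2m-1}}=0$, so the quadratic part vanishes identically and the condition reduces to $\Tr(wX)\equiv0$ on $\F_{2^m}$; exactly as in Theorem~\ref{T1}, the surviving linear coefficient retains the nonzero summand $\Trnn((1+c)\beta)$, so this fails and $\mathcal W_G(w)=0$, giving $S_1=0$.

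\emph{The sum $S_0$ and the role of $m\not\equiv0\pmod3$.} For $\beta\in\F_{2^m}$ all quadratic coefficients vanish, so the inner sum equals $2^n[\,w(\beta)=0\,]$; moreover every $a$-dependent term of $w$ carries a $\Trnn(\beta)$ factor and hence vanishes on $\F_{2^m}$, leaving an $a$-free equation $\alpha_4\beta^4+\alpha_2\beta^2+\alpha_1\beta=0$. Evaluating the trace coefficients and setting $T:=\Trnn(\delta)\in\F_{2^m}$, I expect $\alpha_4=(1+c)^4T^3$, $\alpha_2=(1+c)^2T$, $\alpha_1=1+c$. If $T=0$ the equation is $(1+c)\beta=0$, forcing $\beta=0$; if $T\neq0$, factoring out $\beta$, dividing by $1+c$, and substituting $Z=(1+c)T\beta$ produces the cubic $Z^3+Z+1=0$ over $\F_{2^m}$, \emph{independent of both $\delta$ and $c$}. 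By Lemma~\ref{L01} this cubic has no unique solution (as $1^{-1}+1=0$, so $\Trm(1^{-1}+1)=0$) and has three solutions precisely when $p_m(1)=0$; since $p_1(1)=p_2(1)=1$ and $p_k(1)=p_{k-1}(1)+p_{k-2}(1)$ is the Fibonacci recurrence modulo $2$ with pattern $1,1,0,1,1,0,\dots$, we get $p_m(1)=0\iff m\equiv0\pmod3$. Hence for $m\not\equiv0\pmod3$ the cubic has no root in $\F_{2^m}$, only $\beta=0$ contributes, $S_0=2^n$, and therefore ${}_c\Delta_F(a,b)=1$ for all $(a,b)$, i.e.\ $F$ is P$c$N. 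The main obstacle is the precise evaluation of $\alpha_4,\alpha_2,\alpha_1$: one must carry the $\delta$-trace monomials through the exponent reduction and the Frobenius shifts $\Tr(Z^{2^j})=\Tr(Z)$ that bring each contribution into the coefficient of $X$, and confirm the clean collapse to $Z^3+Z+1$; the payoff is that this constant is $\delta$- and $c$-free, so the single fact $p_m(1)=0\iff3\mid m$ settles the theorem uniformly in $\delta$.
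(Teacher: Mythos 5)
Your proposal is correct, and its skeleton—the simplification of $F$, the Weil-sum identity \eqref{ddtw}, the split into $S_0$ and $S_1$ according to whether $\Trnn((1+c)\beta)$ vanishes, and the reduction of $S_0$ to the cubic $Z^3+Z+1=0$ over $\F_{2^m}$—matches the paper's proof; your predicted coefficients $\alpha_4=(1+c)^4T^3$, $\alpha_2=(1+c)^2T$, $\alpha_1=1+c$ are exactly the paper's Equation~\eqref{eqqq}, and your Fibonacci-recurrence-mod-$2$ argument that $p_m(1)=0$ if and only if $3\mid m$ is a cleaner justification than the paper's appeal to the parity of the number of terms of $p_m$. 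Where you genuinely diverge is the treatment of $S_1$: the paper invokes Co\c{s}gun's explicit Walsh-transform evaluations (Lemmas~\ref{L010} and~\ref{L011}) and runs a three-case analysis on $m$ odd, $m\equiv 0\pmod 4$, $m\equiv 2\pmod 4$, whereas you apply the quadratic-form kernel criterion (Lemma~\ref{walsh}), observe $\F_{2^m}\subseteq\mathrm{Ker}(L)$, note that the Gold part vanishes on $\F_{2^m}$, and conclude $\mathcal{W}_G(w)=0$ because $\Trnn(w)=(1+c)\Trnn(\beta)\neq 0$. This is precisely the technique the paper uses in Case~2 of Theorem~\ref{T1} and in Theorem~\ref{T6}, and transplanting it here is a real gain: it is shorter and uniform in $m$, avoiding the $m\bmod 4$ case analysis entirely. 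Two small points would make it airtight: (i) Lemma~\ref{walsh} is stated for exponents $2^i+1$ with $i\le m$, so one should first rewrite $\Tr\left(uX^{2^{m+1}+1}\right)=\Tr\left(u^{2^{m-1}}X^{2^{m-1}+1}\right)$ by applying Frobenius inside the absolute trace, after which $L(X)=u\,\Trnn(X)^2+u^{2^{m-1}}\Trnn(X)^{2^{m-1}}$ and the containment $\F_{2^m}\subseteq\mathrm{Ker}(L)$ is immediate; (ii) when $\Trnn(\delta)=0$ the quadratic coefficient $u$ vanishes even for $\Trnn(\beta)\neq 0$, but your argument still closes, since then the inner sum equals $2^n$ precisely when $w=0$, and $\Trnn(w)\neq 0$ forces $w\neq 0$ (alternatively, note as the paper does that in this case $F(X)=X^{2^m}+\delta^{2^m}$ is trivially P$c$N).
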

\begin{proof}
We know $F(X)$ is a permutation polynomial from Lemma~\ref{L05}. One can easily simplify $F(X)$ to get the below expression
\allowdisplaybreaks\begin{align*}
F(X) & = \Trnn(\delta)^{2^{m-2}} \Trnn(X^{2^{m-1}+2^{m-2}} +X^{2^{m-1}+2^{2m-2}}) +  \delta^{3 \cdot 2^{2m-2}+2^{m-2}}\\
&  + (\delta^{{2^{m-2}}+2^{2m-2}}+\delta^{2^{2m-1}}) \Trnn(X)^{2^{m-1}} + X^{2^m} + \delta^{2^{2m-1}}\Trnn(\delta)^{2^{m-2}} \Trnn(X^{2^{m-2}}).
\end{align*}
Notice that for $\delta \in \F_{2^m}$, $F(X)= X^{2^m} + \delta^{2^{m}}$, would have $c$-differential uniformity $1$. Let us assume $\Trnn(\delta) \neq 0$. One may recall that for any $(a, b) \in \F_{2^{n}} \times \F_{2^{n}}$,
the $c$-DDT entry $_c\Delta_F(a, b)$ is given by the number of solutions $X \in \F_{2^{n}}$ of the  equation $F(X+a)+cF(X)=b$, or equivalently,
\allowdisplaybreaks\begin{align*}
 & (1+c)F(X)+\Trnn(\delta^{2^{m-2}}) \Trnn((a^{2^{m-2}}+a^{2^{2m-2}})X^{2^{m-1}}+a^{2^{m-1}}(X^{2^{m-2}}+X^{2^{2m-2}})) \\
 & \qquad \qquad \qquad + F(a)+ \delta^{3 \cdot 2^{2m-2}+2^{m-2}} = b.
\end{align*}
 Now, by using Equation~\eqref{ddtw}, the number of solutions $X \in \F_{2^n}$ of the above equation, $_c\Delta_F(a,b)$, is given by 
\allowdisplaybreaks\begin{align*}
& 2^n \  _c\Delta_F(a,b) = \sum_{\beta \in \F_{2^n}} \sum_{X \in \F_{2^n}} (-1)^{\displaystyle{\Tr(\beta((1+c)F(X)+  F(a)+ \delta^{3 \cdot 2^{2m-2}+2^{m-2}}+b))}} \\
 & \quad (-1)^{\displaystyle{\Tr(\beta\Trnn(\delta^{2^{m-2}}) \Trnn((a^{2^{m-2}}+a^{2^{2m-2}})X^{2^{m-1}}+a^{2^{m-1}}(X^{2^{m-2}}+X^{2^{2m-2}})))}}\\
 & = \sum_{\beta \in \F_{2^n}} (-1)^{\displaystyle{\Tr(\beta(  F(a)+ \delta^{3 \cdot 2^{2m-2}+2^{m-2}}+b))}}  \sum_{X \in \F_{2^n}} (-1)^{\displaystyle{\Tr(\beta((1+c)F(X)))}} \\
 & (-1)^{\displaystyle{\Tr(\beta \Trnn(\delta^{2^{m-2}}) \Trnn((a^{2^{m-2}}+a^{2^{2m-2}})X^{2^{m-1}}+a^{2^{m-1}}(X^{2^{m-2}}+X^{2^{2m-2}})))}}.
\end{align*}
Using
\allowdisplaybreaks\begin{align*}
 T_0 & = \Tr(\beta(1+c)F(X)), \\
 T_1 & = \Tr(\beta\Trnn(\delta^{2^{m-2}}) \Trnn((a^{2^{m-2}}+a^{2^{2m-2}})X^{2^{m-1}}+a^{2^{m-1}}(X^{2^{m-2}}+X^{2^{2m-2}}))),
\end{align*}
  the above equation becomes
\begin{equation}
\label{eq222}
 _c\Delta_F(a,b) = \frac{1}{2^n} \sum_{\beta \in \F_{2^n}} (-1)^{\displaystyle{\Tr\left(\beta\left(F(a)+b+c\delta^{3 \cdot 2^{2m-2}+2^{m-2}}\right)\right)}}\sum_{X \in \F_{2^n}} (-1)^{\displaystyle{T_0+T_1}}.
\end{equation}
Further, we simplify $T_0$ and $T_1$ as follows:
\allowdisplaybreaks
\begin{align*}
T_1 & = \Tr(\beta \Trnn(\delta^{2^{m-2}}) \Trnn((a^{2^{m-2}}+a^{2^{2m-2}})X^{2^{m-1}}+a^{2^{m-1}}(X^{2^{m-2}}+X^{2^{2m-2}}))) \\
& = \Tr( \Trnn(\delta^{2^{m-1}}) (a^{2^{m-1}}+a^{2^{2m-1}})\Trnn(\beta^2) X+ \Trnn(\delta) \Trnn(a^{2})\Trnn(\beta^4) X),
\end{align*}
and 
\allowdisplaybreaks
\begin{align*}
T_0 & = \Tr(\beta(1+c)F(X)) \\
& = \Tr\left(\beta(1+c)( \Trnn(\delta)^{2^{m-2}} \Trnn(X^{2^{m-1}+2^{m-2}} +X^{2^{m-1}+2^{2m-2}}))\right)\\
& \qquad + \Tr( \beta(1+c)\delta^{2^{2m-1}}\Trnn(\delta)^{2^{m-2}} \Trnn(X^{2^{m-2}}))\\
& \qquad + \Tr\left(\beta(1+c) ((\delta^{{2^{m-2}}+2^{2m-2}}+\delta^{2^{2m-1}}) \Trnn(X)^{2^{m-1}} + X^{2^m} + \delta^{3 \cdot 2^{2m-2}+2^{m-2}}) \right)\\
& = \Tr(\beta(1+c)\delta^{3 \cdot 2^{2m-2}+2^{m-2}}) + \Tr((\beta(1+c))^{2^m} X+\Trnn(\delta)\Trnn(\delta^{2} (\beta(1+c))^4) X) \\
&\qquad \quad + \Tr\left(\Trnn((\beta(1+c))^2(\delta + \delta^{2^{m-1}+2^{2m-1}}))X\right)\\
&\qquad \qquad  +\Trnn(\delta)\Trnn((\beta(1+c))^4)(X^{3}+X^{2^{m+1}+1}).
\end{align*}

Now, Equation~\eqref{eq222} reduces to
\allowdisplaybreaks
\begin{align*}
 _c\Delta_F(a,b) &= \frac{1}{2^n} \sum_{\beta \in \F_{2^n}} (-1)^{\displaystyle{\Tr\left(\beta\left(F(a)+b+c\delta^{3 \cdot 2^{2m-2}+2^{m-2}}\right)\right)}}\\
 & \qquad \quad \sum_{X \in \F_{2^n}} (-1)^{\displaystyle{\Tr(u(X^{2^{1}+1}+X^{2^{m+1}+1})+wX)}},
\end{align*}
where 
\allowdisplaybreaks
\begin{align*}
 u & = \Trnn(\delta)\Trnn((\beta(1+c))^4)=(1+c)^4 \Trnn(\delta)\Trnn(\beta)^4 \\
 w & =  \Trnn(\delta^{2^{m-1}}) \Trnn(a^{2^{m-1}})\Trnn(\beta^2) + \Trnn(\delta) \Trnn(a^{2})\Trnn(\beta^4)\\
 & \quad +(\beta(1+c))^{2^m}+ \Trnn(\delta)\Trnn(\delta^{2} (\beta(1+c))^4 )+ \Trnn((\beta(1+c))^2(\delta + \delta^{2^{m-1}+2^{2m-1}})) \\
 & =  \Trnn(\delta^{2^{m-1}}) \Trnn(a^{2^{m-1}})\Trnn(\beta^2) + \Trnn(\delta) \Trnn(a^{2})\Trnn(\beta^4)\\
 & \quad +(1+c)\beta^{2^m}+ (1+c)^4\Trnn(\delta)\Trnn(\delta^{2} \beta^4) +(1+c)^2 \Trnn(\beta^2(\delta + \delta^{2^{m-1}+2^{2m-1}})).
\end{align*}
Further, splitting the above sum depending on whether $\Trnn(\beta)$ is~$0$  or not, we get
\allowdisplaybreaks
\begin{align*}
 2^n\ _c\Delta_F(a,b)= & \sum_{\substack{\beta \in \F_{2^n} \\ \Trnn(\beta)=0}} (-1)^{\displaystyle{\Tr(\beta(F(a)+b+c\delta^{3 \cdot 2^{2m-2}+2^{m-2}}))}} \sum_{X \in \F_{2^n}} (-1)^{\displaystyle{\Tr((1+c)\beta^{2^m}X)}} \\
 & (-1)^{\displaystyle{\Tr(( (1+c)^4\Trnn(\delta)\Trnn(\delta^{2} \beta^4) +(1+c)^2 \Trnn(\beta^2(\delta + \delta^{2^{m-1}+2^{2m-1}})))X)}}\\
 & +  \sum_{\substack{\beta \in \F_{2^n} \\ \Trnn(\beta)\neq 0}} (-1)^{\displaystyle{\Tr(\beta(F(a)+b+c\delta^{3\cdot 2^{2m-2}+2^{m-2}}))}}\\
 & \qquad  \sum_{X \in \F_{2^n}} (-1)^{\displaystyle{\Tr(u(X^{2^{1}+1}+X^{2^{m+1}+1})+wX)}}\\
 &= S_0 + S_1,
\end{align*}
where $S_0,S_1$ are the two inner sums. We first consider the sum $S_0$ below. We write
\allowdisplaybreaks
\begin{align*}
 S_0 & = \sum_{\substack{\beta \in \F_{2^n} \\ \Trnn(\beta)=0}} (-1)^{\displaystyle{\Tr(\beta(F(a)+b+c\delta^{3\cdot 2^{2m-2}+2^{m-2}}))}} \sum_{X \in \F_{2^n}} (-1)^{\displaystyle{\Tr\left((1+c)\beta^{2^m}X\right)}} \\
 & \qquad (-1)^{\displaystyle{\Tr\left(( (1+c)^4\Trnn(\delta)\Trnn(\delta^{2} \beta^4) +(1+c)^2 \Trnn(\beta^2(\delta + \delta^{2^{m-1}+2^{2m-1}})))X \right)}}\\
 & = \sum_{\substack{\beta \in \F_{2^n} \\ \Trnn(\beta)=0}} (-1)^{\displaystyle{\Tr(\beta(F(a)+b+c\delta^{3\cdot 2^{2m-2}+2^{m-2}}))}} \sum_{X \in \F_{2^n}} (-1)^{\displaystyle{\Tr\left(\beta (1+c) X\right)}} \\
 & \qquad (-1)^{\displaystyle{\Tr\left(( (1+c)^4\Trnn(\delta)^3 \beta^4 +(1+c)^2 \Trnn(\delta)\beta^2)X \right)}}
  = 2^n. 
\end{align*}
The last identity follows by analyzing the number of solutions $\beta$ of the following equation,
\begin{equation}\label{eqqq}
 \beta(1+c) +  (1+c)^4\Trnn(\delta)^3 \beta^4 +(1+c)^2 \Trnn(\delta) \beta^2 =0,
\end{equation}
or equivalently, nonzero solutions $\beta \in \F_{2^m}$ of $Z^3+Z+1=0$, where $Z= (1+c)\Trnn(\delta) \beta$. From Lemma~\ref{L01}, it is clear that the polynomial $p_m(X)$ has an odd number of terms if $m \not \equiv 0 \pmod 3$, and each term is a monomial of $X$. Hence, it cannot have three distinct solutions in $\F_{2^m}$. Also, since $\Trm(1+1)=0$, it cannot have a unique solution. Thus, Equation~\eqref{eqqq} has only one solution $\beta=0$ in $\F_{2^m}$, and that gives us $S_0 = 2^n$. Next, we have
\allowdisplaybreaks
\begin{align*}
 S_1 & = \sum_{\substack{\beta \in \F_{2^n} \\ \Trnn(\beta)\neq 0}} (-1)^{\displaystyle{\Tr(\beta(F(a)+b+c\delta^{3\cdot 2^{2m-2}+2^{m-2}}))}} \\
& \qquad \qquad \sum_{X \in \F_{2^n}} (-1)^{\displaystyle{\Tr\left(u(X^{2^{1}+1}+X^{2^{m+1}+1})+wX\right)}} \\
 & = \sum_{\substack{\beta \in \F_{2^n} \\ \Trnn(\beta)\neq 0}} (-1)^{\displaystyle{\Tr(\beta(F(a)+b+c\delta^{3 \cdot 2^{2m-2}+2^{m-2}}))}}
\sum_{X \in \F_{2^n}} \mathcal{W}_{f_u}(w),
\end{align*}
where  $\mathcal{W}_{f_u}(w)$ is the Walsh transform of the function $f_u (X)=  \Tr( u(X^{2^{1}+1}+X^{2^{m+1}+1}))$ at $w$.  
Next, we divide our proof into three cases based on the value of $m$. Also, we have $k=2m$, $a =1$ and $b=m+1$.

\textbf{Case 1.} Let $m$ be odd. Then $\gcd(b-a,k)=m$ and $\gcd(b+a,k)=1$. Also, notice that $v_2(a)= v_2(1)=0$, $v_2(b) = v_2(m+1)$ and $v_2(k)=v_2(2m)=1$. Summarizing, we observe that $v_2(a) = v_2(b) \leq v_2(k)$ does not hold. From Lemma~\ref{L011}, one can see that $\mathcal{W}_{f_{u}}(w)= \mathcal{W}_f(\frac{w}{\eta})$, where $f(X) = \Tr( X^{2^{1}+1}+X^{2^{m+1}+1})$ and $\eta \in \F_{2^m}$ such that $u = \eta^{2^{1}+1}$. We now show that $\mathcal{W}_f(\frac{w}{\eta})=0$ via Lemma~\ref{L010}. Notice that $0=v_2(b-a)=v_2(b+a)=v_2(k)-1$.
Hence it is sufficient to show that $(L_1 \circ L_2)(\frac{w}{\eta}+1)\neq 0$. Now,
$$
(L_1 \circ L_2)(X) = X+ X^2 +X^{2^2} + \cdots + X^{2^{m-1}}.
$$
Hence, 
\allowdisplaybreaks\begin{align*}
 &(L_1 \circ L_2)\left(\frac{w}{\eta}+1\right)  =m+ \frac{\beta^{2^m}(1+c)}{\eta} + \left(\frac{\beta^{2^m}(1+c)}{\eta}\right)^2 +\cdots +\left(\frac{\beta^{2^m}(1+c)}{\eta}\right)^{2^{m-1}}\\
 &\qquad + \Trm\left( \frac{ \Trnn(\delta^{2^{m-1}}) \Trnn(a^{2^{m-1}})\Trnn(\beta^2) + \Trnn(\delta) \Trnn(a^{2})\Trnn(\beta^4)}{\eta}\right) \\
 &\qquad +\Trm\left( \frac{ (1+c)^4\Trnn(\delta)\Trnn(\delta^{2^{2m+1}} \beta^4) +(1+c)^2 \Trnn(\beta^2(\delta + \delta^{2^{m-1}+2^{2m-1}}))}{\eta}\right).\\
\end{align*}
Now, if $(L_1 \circ L_2)\left(\frac{w}{\eta}+1\right)=0$, then we have $\left((L_1 \circ L_2)\left(\frac{w}{\eta}+1\right)\right)^2=0$. This will give us 
\allowdisplaybreaks 
\begin{equation}
\begin{split}
  (L_1 \circ L_2)\left(\frac{w}{\eta}+1\right)+ \left((L_1 \circ L_2)\left(\frac{w}{\eta}+1\right)\right)^2 & =\frac{1}{\eta} \Trnn(\beta^{2^m}(1+c)) +m +m^2\\
  & =\frac{(1+c)}{\eta} \Trnn(\beta) + m+m^2 = 0.
  \end{split}
\end{equation}
As $u = \eta^3$, we have $\eta \neq 0$, that gives us $\Trnn(\beta) = 0$ which is obviously not true and hence we are done.

\textbf{Case 2.} Let $m \equiv 0 \pmod 4$. If we have $v_2(m)=v ( \geq 2)$, then $v_2(a)= v_2(b) = 0$ and $v_2(k)=v_2(2m)=v+1$. Summarizing, we have $v_2(a)= v_2(b) < v_2(k)$ and $v_2(k) > \nu$. Thus, by using Lemma~\ref{L011} if we show that $L_1 \circ L_2 \left(\frac{w}{u^2}\right) \neq 0$ then we have $\mathcal{W}_G(w)=0$ which gives us $S_1 =0$ and then we are done with the claim. 
So, our goal is to now show that $L_1 \circ L_2 \left(\frac{w}{u^2}\right)  \neq 0$. As $d_1 =m$ and $d_2=2$, we have 
$$
(L_1 \circ L_2)(X) = X+ X^{2^2} +X^{2^{(2\cdot2)}} + X^{2^{(3\cdot2)}} +\cdots + X^{2^{m-2}}.
$$ 
Now, if $(L_1 \circ L_2)\left(\frac{w}{u^2}\right)=0$, then we have $\left((L_1 \circ L_2)\left(\frac{w}{u^2}\right)\right)^{2^2}=0$. This will give us 
\begin{equation*}
  (L_1 \circ L_2)\left(\frac{w}{u^2}\right) + \left((L_1 \circ L_2)\left(\frac{w}{u^2}\right)\right)^{2^2} = \Trnn\left(\frac{w}{u^2}\right)= \frac{1}{u^2} \Trnn(w)  =0,
\end{equation*}
which is a contradiction to the assumption that $\Trnn(\beta)\neq0$.

\textbf{Case 3.} Let $m \equiv 2 \pmod 4$, then $d_1=m$ and $d_2= 4$. In this subcase, we have $v_2(a)= v_2(b) = 0$ and $v_2(k)=v_2(2m)=2$. Summarizing, we observe that $v_2(a) = v_2(b) < v_2(k)$ holds and $v_2(k) \leq \nu$. Then again from Lemma~\ref{L011}, if we show that $ \frac{w}{u^2} \not \in S_{d_1} \cap S_{d_2}$, then we are done. In this subcase, $S_{d_1} = \{X \in \F_{2^n} : \Trnn(X) = 0\}$ and $ S_{d_2} = \{X \in \F_{2^n} : \Trfn(X) = 0\}$.
Let $ \frac{w}{u^2} \in S_{d_1} \cap S_{d_2}$. Then we have $\Trnn\left( \frac{w}{u^2}\right) = 0$, which implies that $\Trnn(\beta) = 0$, which is not possible. Hence $\mathcal{W}_G(w)=0$, giving us $S_1 =0$. This shows the claim that $F$ is P$c$N for $c \in \F_{2^m}\setminus \{1\}$, and the proof is done.
\end{proof}

The following is an example illustrating Theorem~\ref{T3}.
\begin{exmp}
Let $F(X)=(X^{16}+X+\delta)^{196}+X$, a permutation polynomial over $\F_{2^8}$, where $\F_{2^{8}}^{*}=\langle g \rangle $ and $g$ is a primitive element of $\F_{2^{8}}$. Then, $F(X)$ is P$c$N for all $\delta \in \F_{2^8}$ and $c \in \{0, g^{17}, g^{34}, g^{51}, g^{68}, g^{85}, g^{102}, g^{119}, g^{136}, g^{153}, g^{170}, g^{187}, g^{204}, g^{221}, g^{238}, g^{255}\}$.
\end{exmp}

\begin{thm}\label{T4}
 Let $F(X)=(X^{2^m}+X+\delta)^{3 \cdot 2^{m-2}+2^{2m-2}}+X$ over $\F_{2^{n}}$, where $n=2m$ and $m \not\equiv 0 \pmod 3$. Then $F$ is P$c$N for all $c \in \F_{2^m} \setminus \{1\}$ and $\delta \in \F_{2^n}$. 
\end{thm}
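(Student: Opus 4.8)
The plan is to avoid repeating the long Weil-sum computation of Theorem~\ref{T3} by recognizing that $F$ is a Frobenius conjugate of the function already treated there. Write $\phi\colon\F_{2^n}\to\F_{2^n}$, $\phi(X)=X^{2^m}$, for the order-two field automorphism that fixes $\F_{2^m}$ pointwise, and let $G(X)=(X^{2^m}+X+\delta)^{3\cdot 2^{2m-2}+2^{m-2}}+X$ be the permutation of Theorem~\ref{T3}, taken with the \emph{same} $\delta$. First I would record the exponent congruence
\begin{align*}
2^m\bigl(3\cdot 2^{2m-2}+2^{m-2}\bigr)&=3\cdot 2^{3m-2}+2^{2m-2}\\
&\equiv 3\cdot 2^{m-2}+2^{2m-2}\pmod{2^{2m}-1},
\end{align*}
where I used $2^{3m-2}=2^{2m}\cdot 2^{m-2}\equiv 2^{m-2}$. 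Since $X^{2^m}+X+\delta\in\F_{2^n}$, raising it to the $(3\cdot 2^{m-2}+2^{2m-2})$-th power is the same as raising its $(3\cdot 2^{2m-2}+2^{m-2})$-th power to the $2^m$-th power.

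A direct substitution then gives the key identity $F=\phi\circ G\circ\phi$. Indeed, since $\phi(X)=X^{2^m}$ and $X^{2^{2m}}=X$ on $\F_{2^n}$,
\begin{align*}
\phi\bigl(G(\phi(X))\bigr)&=\phi\Bigl((X^{2^m}+X+\delta)^{3\cdot 2^{2m-2}+2^{m-2}}+X^{2^m}\Bigr)\\
&=(X^{2^m}+X+\delta)^{2^m(3\cdot 2^{2m-2}+2^{m-2})}+X^{2^{2m}}\\
&=(X^{2^m}+X+\delta)^{3\cdot 2^{m-2}+2^{2m-2}}+X=F(X).
\end{align*}

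The heart of the argument is that conjugation by $\phi$ preserves the $c$-differential uniformity for $c\in\F_{2^m}$, because $\phi$ is additive and $\F_{2^m}$-linear, i.e. $\phi(cX)=c^{2^m}X^{2^m}=c\,\phi(X)$ for such $c$. Concretely, for $c\in\F_{2^m}$ and any $a,b\in\F_{2^n}$,
\begin{align*}
F(X+a)+cF(X)&=\phi\bigl(G(\phi(X)+\phi(a))\bigr)+c\,\phi\bigl(G(\phi(X))\bigr)\\
&=\phi\bigl(G(\phi(X)+\phi(a))+cG(\phi(X))\bigr),
\end{align*}
so applying $\phi^{-1}=\phi$ and running through the bijective substitution $X'=\phi(X)$ shows that $F(X+a)+cF(X)=b$ has exactly as many solutions as $G(X'+a^{2^m})+cG(X')=b^{2^m}$. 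Hence ${}_c\Delta_F(a,b)={}_c\Delta_G(a^{2^m},b^{2^m})$, and taking the maximum over the bijective reparametrization $(a,b)\mapsto(a^{2^m},b^{2^m})$ yields ${}_c\Delta_F={}_c\Delta_G$. Finally I would invoke Theorem~\ref{T3}: as $m\not\equiv 0\pmod 3$, the function $G$ is P$c$N for every $c\in\F_{2^m}\setminus\{1\}$, so ${}_c\Delta_G=1$ and therefore ${}_c\Delta_F=1$, which is the claim.

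I expect the only genuine care points to be the exponent bookkeeping modulo $2^{2m}-1$ (and the harmless instances where the base $X^{2^m}+X+\delta$ vanishes) together with the observation that $\phi$ commutes with scaling by $c\in\F_{2^m}$, which is exactly what forces the hypothesis $c\in\F_{2^m}$. Once $F=\phi\circ G\circ\phi$ is established the transfer of $c$-differential uniformity is automatic. Should one insist on avoiding this trick, a direct proof mirroring Theorem~\ref{T3} is available as a fallback—simplify $F$, split the Weil sum into $S_0$ and $S_1$, reduce $S_0$ to the cubic $Z^3+Z+1=0$ over $\F_{2^m}$ (which has no root when $m\not\equiv 0\pmod 3$ by Lemma~\ref{L01}), and annihilate $S_1$ through the Walsh-transform vanishing of Lemmas~\ref{L010}--\ref{L011} in the three cases according to $m\bmod 4$—but the conjugation route is far shorter and makes the parallel with Theorem~\ref{T3} transparent.
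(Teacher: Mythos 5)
Your proof is correct, and it takes a genuinely different route from the paper's. The paper's proof of Theorem~\ref{T4} is a one-line remark: cite Lemma~\ref{L06} for the permutation property and repeat the argument of Theorem~\ref{T3} on the new exponent (the Weil-sum expansion, the $S_0$/$S_1$ split, the cubic $Z^3+Z+1=0$ over $\F_{2^m}$, and the Walsh-transform vanishing of Lemmas~\ref{L010} and~\ref{L011}) --- exactly the fallback you sketch at the end. Your main argument instead reduces Theorem~\ref{T4} to Theorem~\ref{T3} by the conjugation $F=\phi\circ G\circ\phi$ with $\phi(X)=X^{2^m}$, and it checks out: the congruence $2^m\bigl(3\cdot 2^{2m-2}+2^{m-2}\bigr)\equiv 3\cdot 2^{m-2}+2^{2m-2}\pmod{2^{2m}-1}$ is correct and harmless at zeros of the base (both exponents are positive); the inner base $(X^{2^m})^{2^m}+X^{2^m}+\delta=X^{2^m}+X+\delta$ is unchanged under the substitution, so the \emph{same} $\delta$ works and Theorem~\ref{T3} applies with identical hypotheses ($n=2m$, $m\not\equiv 0\pmod 3$, arbitrary $\delta\in\F_{2^n}$); and the transfer ${}_c\Delta_F(a,b)={}_c\Delta_G(a^{2^m},b^{2^m})$ rests precisely on $\phi(cY)=c\,\phi(Y)$ for $c\in\F_{2^m}$, which is where the hypothesis on $c$ enters. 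What your route buys: it avoids duplicating a multi-page computation, and it isolates a reusable principle --- any field automorphism fixing $c$ preserves the full $c$-DDT up to a bijective relabeling of $(a,b)$, hence the $c$-differential uniformity --- which also explains structurally why Theorems~\ref{T3} and~\ref{T4} are the same statement: the two exponents lie in one cyclotomic coset modulo $2^{2m}-1$. What the paper's route buys is self-containedness and methodological uniformity with the rest of Section~\ref{S3}; given that the paper leaves that repeated computation unwritten, your conjugation argument is arguably the more complete proof as stated.
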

\begin{proof}
 Notice that $F(X)$ is a permutation polynomial over $\F_{2^n}$ from Lemma~\ref{L06}. The proof follows along a similar lines as in the aforementioned Theorem~\ref{T3}.
\end{proof}

We now present the following example illustrating Theorem~\ref{T4}
\begin{exmp}
Consider $F(X)=(X^{32}+X+\delta)^{280}+X$, a permutation polynomial over $\F_{2^{10}}$, where $\F_{2^{10}}^{*}=\langle g \rangle $ and $g$ is a primitive element of $\F_{2^{10}}$. Then, $F(X)$ is P$c$N for all $\delta \in \F_{2^{10}}$ and $c \in \{0, g^{33k} ~\mid ~1 \leq k \leq 31\}$.
\end{exmp}

Next, we compute the $c$-differential uniformity of the permutation polynomial $F(X)=(X^{2^m}+X+\delta)^{2^{2m+1}+2^m}+(X^{2^m}+X+\delta)^{2^{2m}+2^{m+1}}+X$ and show that it is P$c$N for some values of $c$ and $\delta$. 

\begin{thm} \label{T5}
Let $F(X)=(X^{2^m}+X+\delta)^{2^{2m+1}+2^m}+(X^{2^m}+X+\delta)^{2^{2m}+2^{m+1}}+X$ over $\F_{2^{n}}$, where $n=3m$. Let $\delta\in\F_{2^n}$ and $\Trmn(\delta)=0$. Then $F$ is P$c$N for $c \in \F_{2^m}\setminus \{1\}$.
\end{thm}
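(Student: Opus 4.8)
The plan is to bypass the character-sum machinery of the preceding theorems by first simplifying $F$ into a transparent form. The crucial observation is that the hypothesis $\Trmn(\delta)=0$ forces a trace-zero relation on the inner quantity. Writing $Y:=X^{2^m}+X+\delta$ and recalling $n=3m$, one has $\Trmn(X^{2^m})=\Trmn(X)$, so $\Trmn(X^{2^m}+X)=0$ identically; hence $\Trmn(Y)=\Trmn(\delta)=0$, i.e. $Y^{2^{2m}}=Y+Y^{2^m}$. I would substitute this into the two exponents: since $Y^{2^{2m+1}+2^m}=(Y+Y^{2^m})^2\,Y^{2^m}$ and $Y^{2^{2m}+2^{m+1}}=(Y+Y^{2^m})\,Y^{2^{m+1}}$, their sum telescopes, the common term $Y^{2^m+2^{m+1}}$ cancelling, to leave $Y^{2+2^m}+Y^{1+2^{m+1}}=Y\,Y^{2^m}(Y+Y^{2^m})=Y^{1+2^m+2^{2m}}$. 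Therefore, for all $X\in\F_{2^n}$,
\[
F(X)=N(Y)+X,\qquad N(Y):=Y^{1+2^m+2^{2m}}=N_{\F_{2^n}/\F_{2^m}}(Y)\in\F_{2^m},
\]
the relative norm of $Y$ down to $\F_{2^m}$.

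Next I set up the $c$-derivative directly. Putting $A:=a^{2^m}+a$, one gets $(X+a)^{2^m}+(X+a)+\delta=Y+A$, so $F(X+a)=N(Y+A)+X+a$ and
\[
F(X+a)+cF(X)=N(Y+A)+cN(Y)+(1+c)X+a.
\]
Thus $_c\Delta_F(a,b)$ equals the number of preimages of $b+a$ under the map $\phi_a(X):=(1+c)X+N(Y+A)+cN(Y)$, and it suffices to prove that $\phi_a$ is injective (equivalently, a permutation of $\F_{2^n}$) for every $a$; this yields $_c\Delta_F(a,b)\le 1$ for all $a,b$, i.e. PcN.

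For injectivity I would use two facts. First, $N(Y+A),N(Y)\in\F_{2^m}$ and $c\in\F_{2^m}$, so the whole ``norm part'' of $\phi_a$ lands in $\F_{2^m}$. Second, $Y$ is invariant under translating $X$ by $\F_{2^m}$, because for $s\in\F_{2^m}$ one has $(X+s)^{2^m}+(X+s)=X^{2^m}+X$. Now suppose $\phi_a(X)=\phi_a(X')$ and set $Y'=X'^{2^m}+X'+\delta$. Subtracting gives $(1+c)(X+X')=\bigl[N(Y+A)+N(Y'+A)\bigr]+c\bigl[N(Y)+N(Y')\bigr]\in\F_{2^m}$. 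Since $c\neq 1$, we have $1+c\in\F_{2^m}^{*}$, whence $X+X'\in\F_{2^m}$; by the invariance this forces $Y'=Y$, so the right-hand side vanishes and $X=X'$. Hence $\phi_a$ is bijective and $F$ is PcN for $c\in\F_{2^m}\setminus\{1\}$.

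The main obstacle is entirely concentrated in the first step: recognizing that $\Trmn(\delta)=0$ produces the relation $Y^{2^{2m}}=Y+Y^{2^m}$ and that it collapses the two awkward exponents into the single norm monomial $Y^{1+2^m+2^{2m}}$. Once $F(X)=N(Y)+X$ is available, the remaining argument is short and avoids Weil sums entirely. For uniformity of exposition with Theorems~\ref{T3} and~\ref{T4}, one could instead expand $_c\Delta_F(a,b)$ as a character sum over $\beta$ and split according to whether $\Trmn(\beta)=0$; but the norm reformulation makes this unnecessary, and I would present the direct injectivity proof.
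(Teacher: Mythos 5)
Your proof is correct, and it takes a genuinely different---and considerably more elementary---route than the paper's. The paper expands $F$ into Gold-type monomials plus linear terms, expresses $_c\Delta_F(a,b)$ as a double character sum via \eqref{ddtw}, splits it as $S_0+S_1$ according to whether $\Trmn(\beta)$ vanishes, computes $S_0=2^n$ directly, and then shows $S_1=0$ by a three-way case analysis on $m$ using the explicit Walsh-transform evaluations of Gold-type functions in Lemmas~\ref{L010} and~\ref{L011}. You bypass all of this: the observation that $\Trmn(\delta)=0$ forces $\Trmn(Y)=0$, hence $Y^{2^{2m}}=Y+Y^{2^m}$ for $Y=X^{2^m}+X+\delta$, and your telescoping identity $Y^{2^{2m+1}+2^m}+Y^{2^{2m}+2^{m+1}}=Y^{1+2^m}\bigl(Y+Y^{2^m}\bigr)=Y^{1+2^m+2^{2m}}$ (which I checked; the cross terms $Y^{2^m+2^{m+1}}$ do cancel) reduce $F$ to $F(X)=X+N_{\F_{2^n}/\F_{2^m}}(Y)$, with the norm lying in $\F_{2^m}$; note this identity holds for every argument whose associated $Y$ has zero relative trace, so it applies verbatim at $X+a$, as your computation requires. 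The rest---$1+c\in\F_{2^m}^{*}$ forces $X+X'\in\F_{2^m}$, which forces $Y'=Y$, which forces $X=X'$---is airtight, and in fact gives that every $c$-DDT entry equals exactly $1$. What each approach buys: yours is self-contained, avoids Weil sums and the Walsh-transform lemmas entirely, and exposes the structural reason the theorem is true (the identity map plus any $\F_{2^m}$-valued function of $X^{2^m}+X$ is P$c$N for every $c\in\F_{2^m}\setminus\{1\}$); the paper's character-sum framework, while much heavier here, is the uniform engine behind Theorems~\ref{T1}, \ref{T3}, \ref{T4} and~\ref{T5}, produces the Walsh-coefficient computations the authors advertise as of independent interest, and is what permits the partial results for $c\notin\F_{2^m}$ in Theorem~\ref{T1}---a regime your injectivity argument cannot reach, though Theorem~\ref{T5} makes no claim there.
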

\begin{proof}
First, we know that $F$ is a permutation polynomial via Lemma~\ref{L03}. 
 Next, by expanding the trinomial, one can easily simplify $F(X)$ and gets the below expression
\allowdisplaybreaks\begin{align*}
F(X) & = \Trmn(X^{2^{m+1}+1} +X^{2^{2m+1}+1}) + (\delta^{2^{m+1}} + \delta^{2^{2m+1}}) X^{2^{2m}} + \delta^{2^{2m+1}} X^{2^{m}} + \delta^{2^{2m}} X^{2^{m+1}}\\
&  + (\delta^{2^m}+\delta^{2^{2m}}) X^{2^{2m+1}} + \delta^{2^m} X^2 + (\delta^{2^{m+1}}+1)X + \delta^{2^{m+1}+2^{2m}} +  \delta^{2^{2m+1}+2^{m}}.
\end{align*}
Since $\Trmn(\delta)=0$, we can write the above equation as
\allowdisplaybreaks
\begin{align*}
F(X) & = \Trmn(X^{2^{m+1}+1} +X^{2^{2m+1}+1}) + \delta^2 X^{2^{2m}} + \delta^{2^{2m+1}} X^{2^{m}} + \delta^{2^{2m}} X^{2^{m+1}}\\
&  + \delta X^{2^{2m+1}} + \delta^{2^m} X^2 + (\delta^{2^{m+1}}+1)X + \delta^{2^{m+1}+2^{2m}} +  \delta^{2^{2m+1}+2^{m}}.
\end{align*}
Recall that, for any $(a, b) \in \F_{2^{n}} \times \F_{2^{n}}$,
the $c$-DDT entry $_c\Delta_F(a, b)$ is given by the number of solutions $X \in \F_{2^{n}}$ of the equation $ F(X+a)+cF(X)=b$, or equivalently,
\allowdisplaybreaks
\begin{align*}
 & (1+c)F(X)+F(a)+ \delta^{2^{m+1}+2^{2m}} +  \delta^{2^{2m+1}+2^{m}}\\
 & \qquad \qquad \qquad + \Trmn((a^{2^{m+1}} +a^{2^{2m+1}})X +a(X^{2^{m+1}} +X^{2^{2m+1}})) = b.
\end{align*}
From Equation~\eqref{ddtw}, the number of solutions $X \in \F_{2^n}$ of the above equation is given by 
\allowdisplaybreaks\begin{align*}
& 2^n \  _c\Delta_F(a,b) = \sum_{\beta \in \F_{2^n}} \sum_{X \in \F_{2^n}} (-1)^{\displaystyle{\Tr(\beta((1+c)F(X)+F(a)+ \delta^{2^{m+1}+2^{2m}} +  \delta^{2^{2m+1}+2^{m}}))}} \\
 & \qquad \qquad (-1)^{\displaystyle{\Tr(\beta(\Trmn((a^{2^{m+1}} +a^{2^{2m+1}})X +a(X^{2^{m+1}} +X^{2^{2m+1}})) + b))}},
\end{align*}
or equivalently,
\begin{equation*}
 \begin{split}
 &2^n \ _c\Delta_F(a,b)  =  \sum_{\beta \in \F_{2^n}} (-1)^{\displaystyle{\Tr\left(\beta\left(F(a)+b+\delta^{2^{m+1}+2^{2m}} +  \delta^{2^{2m+1}+2^{m}}\right)\right)}}  \\
  &\qquad \sum_{X \in \F_{2^n}} (-1)^{\displaystyle{\Tr\left(\beta(1+c)F(X) + \beta \Trmn((a^{2^{m+1}} +a^{2^{2m+1}})X +a(X^{2^{m+1}} +X^{2^{2m+1}})) \right)}}.
 \end{split}
\end{equation*}
Using notations
\allowdisplaybreaks\begin{align*}
 T_0 & = \Tr(\beta(1+c)F(X)) \\
 T_1 & = \Tr(\beta( \Trmn((a^{2^{m+1}} +a^{2^{2m+1}})X +a(X^{2^{m+1}} +X^{2^{2m+1}})))),
\end{align*}
 the above equation becomes
\begin{equation}\label{eq331}
 _c\Delta_F(a,b) = \frac{1}{2^n} \sum_{\beta \in \F_{2^n}} (-1)^{\displaystyle{\Tr\left(\beta\left(F(a)+b+\delta^{2^{m+1}+2^{2m}} +  \delta^{2^{2m+1}+2^{m}}\right)\right)}}\sum_{X \in \F_{2^n}} (-1)^{\displaystyle{T_0+T_1}}.
\end{equation}
Let $c\in \F_{2^m} \setminus \{1\}$. To compute $T_0$ and $T_1$, we simplify them further,
\allowdisplaybreaks
\begin{equation*} 
\begin{split}
T_1 & = \Tr(\beta( \Trmn((a^{2^{m+1}} +a^{2^{2m+1}})X +a(X^{2^{m+1}} +X^{2^{2m+1}})))) \\
& = (a^{2^{m+1}} +a^{2^{2m+1}}) \Trmn(\beta) X +  (a^{2^{m-1}} +a^{2^{2m-1}}) \Trmn(\beta^{2^{m-1}}) X,
\end{split}
\end{equation*}
and 
\allowdisplaybreaks 
\begin{equation*} 
\begin{split}
T_0  & = \Tr(\beta(1+c)F(X)) \\
& = \Tr\left(\beta(1+c)(\Trmn(X^{2^{m+1}+1} +X^{2^{2m+1}+1}) + \delta^2 X^{2^{2m}} + \delta^{2^{2m+1}} X^{2^{m}}+ \delta^{2^{2m}} X^{2^{m+1}}) \right)\\
& \qquad + \Tr\left(\beta(1+c) (\delta X^{2^{2m+1}} + \delta^{2^m} X^2 + (\delta^{2^{m+1}}+1)X + \delta^{2^{m+1}+2^{2m}} +  \delta^{2^{2m+1}+2^{m}}) \right)\\
& = \Tr\left( \Trmn(\beta(1+c))(X^{2^{m+1}+1} +X^{2^{2m+1}+1})+ \delta^{2^{m+1}}((\beta(1+c))^{2^m}+(\beta(1+c))^{2^{2m}})X\right) \\
& \qquad +\Tr\left((\beta(1+c))^{2^{2m-1}} \delta^{2^{m-1}} X+ (\beta(1+c)\delta)^{2^{m-1}} X+ (\beta(1+c)\delta^{2^m})^{2^{3m-1}} X\right)\\
&  \qquad   + \Tr\left(\beta(1+c)(\delta^{2^{m+1}}+1)X + \beta(1+c)( \delta^{2^{m+1}+2^{2m}} +  \delta^{2^{2m+1}+2^{m}})\right)\\
& = \Tr\left( \Trmn(\beta(1+c))(X^{2^{m+1}+1} +X^{2^{2m+1}+1})+ \beta(1+c) X+ \Trmn (\beta(1+c))\delta^{2^{m+1}} X\right) \\
& \qquad + \Tr\left( \Trmn(\beta(1+c))^{2^{m-1}} \delta^{2^{m-1}} X +\beta(1+c)( \delta^{2^{m+1}+2^{2m}} +  \delta^{2^{2m+1}+2^{m}}) \right).
\end{split}
\end{equation*}

Now Equation~\eqref{eq331} reduces to
\allowdisplaybreaks
\begin{equation*}
\begin{split}
 _c\Delta_F(a,b) & = \frac{1}{2^n} \sum_{\beta \in \F_{2^n}} (-1)^{\displaystyle{\Tr\left(\beta(F(a)+b+c( \delta^{2^{m+1}+2^{2m}} +  \delta^{2^{2m+1}+2^{m}}))\right)}} \\
 & \qquad \qquad \sum_{X \in \F_{2^n}} (-1)^{\displaystyle{\Tr(u(X^{2^{m+1}+1} +X^{2^{2m+1}+1})+wX)}},
 \end{split}
\end{equation*}
where 
\allowdisplaybreaks
\begin{align*}
 u & = \Trmn((1+c)\beta) \\
 w & =  (a^{2^{m+1}} +a^{2^{2m+1}}) \Trmn(\beta) +  (a^{2^{m-1}} +a^{2^{2m-1}}) \Trmn(\beta^{2^{m-1}}) + \beta(1+c) \\
 & \qquad +\Trmn (\beta(1+c))\delta^{2^{m+1}}+ \Trmn(\beta(1+c))^{2^{m-1}}\delta^{2^{m-1}}.
\end{align*}

Since $c \in \F_{2^m} \setminus \{1\}$, we can split the above sum depending on whether $\Trmn((1+c)\beta)$ is~$0$ or not, or equivalently,  $\Trmn(\beta)$ is~$0$ or not. We write
\allowdisplaybreaks
\begin{align*}
 2^n \ _c\Delta_F(a,b) & = \sum_{\substack{\beta \in \F_{2^n} \\ \Trmn(\beta)=0}} (-1)^{\displaystyle{\Tr(\beta(F(a)+b+c( \delta^{2^{m+1}+2^{2m}} +  \delta^{2^{2m+1}+2^{m}})))}}\\
 & \qquad \sum_{X \in \F_{2^n}} (-1)^{\displaystyle{\Tr\left(\left(\beta(1+c)+(a^{2^{m+1}} +a^{2^{2m+1}}) \Trmn(\beta) \right)X\right)}} \\
 & \qquad \qquad \qquad \qquad (-1)^{\displaystyle{\Tr\left((a^{2^{m-1}} +a^{2^{2m-1}}) \Trmn(\beta^{2^{m-1}})X \right)}}\\
 & +  \sum_{\substack{\beta \in \F_{2^n} \\ \Trmn(\beta)\neq 0}} (-1)^{\displaystyle{\Tr(\beta(F(a)+b+c( \delta^{2^{m+1}+2^{2m}} +  \delta^{2^{2m+1}+2^{m}})))}}\\
 & \qquad \qquad \qquad \sum_{X \in \F_{2^n}} (-1)^{\displaystyle{\Tr(u(X^{2^{m+1}+1} +X^{2^{2m+1}+1})+wX)}}\\
 &= S_0 + S_1,
\end{align*}
where $S_0,S_1$ are the two inner sums. We first consider the first sum
$$
S_0=\sum_{\substack{\beta \in \F_{2^n} \\ \Trmn(\beta)=0}} (-1)^{\displaystyle{\Tr\left(\beta\left(F(a)+b+c( \delta^{2^{m+1}+2^{2m}} +  \delta^{2^{2m+1}+2^{m}})\right)\right)}}\sum_{X \in \F_{2^n}} (-1)^{\displaystyle{\Tr\left(\beta(1+c)X\right)}}.
$$
Clearly, the inner sum in $S_0$ is zero only for $\beta=0$ and hence we have $S_0=2^n$. Next, we consider the second sum
\allowdisplaybreaks
\begin{align*}
 S_1 & = \sum_{\substack{\beta \in \F_{2^n} \\ \Trmn(\beta)\neq 0}} (-1)^{\displaystyle{\Tr\left(\beta\left(F(a)+b+c( \delta^{2^{m+1}+2^{2m}} +  \delta^{2^{2m+1}+2^{m}})\right)\right)}}
\sum_{X \in \F_{2^n}} \mathcal{W}_{f_u}(w),
\end{align*}
where  $\mathcal{W}_{f_u}(w)$ is the Walsh transform of the function $f_u (X)=   \Tr(u(X^{2^{m+1}+1} +X^{2^{2m+1}+1}))$ at $w$.  
We split our analysis into three cases depending on the value of $m$. Note that $k=3m$, $a =m+1$ and $b=2m+1$.

\textbf{Case 1.} Let $m$ be odd. Then $\gcd(b-a,k)=m$ and $\gcd(b+a,k)=1$. Also, notice that $v_2(a)= v_2(m+1)$, $v_2(b) = v_2(2m+1) = 0$ and $v_2(k)=v_2(3m)=0$. Summarizing all, we observe that $v_2(a) = v_2(b) \leq v_2(k)$ does not hold. From Lemma~\ref{L011}, one can see that $\mathcal{W}_{f_{u}}(w)= \mathcal{W}_f(\frac{w}{\eta})$, where $f(X) =  \Tr(X^{2^{m+1}+1}+X^{2^{2m+1}+1})$ and $\eta \in \F_{2^m}$ such that $u = \eta^{2^{m+1}+1}$. Also, observe that $0=v_2(b-a)=v_2(b+a)\neq v_2(k)-1$ and $v_2(k)=\nu$. Hence it is sufficient to show that $\frac{w}{\eta} \not \in S_{d_1} \cap  S_{d_2}$, where $S_{d_1} = \{X \in \F_{2^n} : \Trmn(X) = 0\}$ and $ S_{d_2} = \{X \in \F_{2^n} : \Trn(X) = 0\}$. It is easy to observe that $\Trmn\left(\frac{w}{\eta}\right)=\frac{1}{\eta}\Trmn(\beta(1+c))=0$, which is not possible as $\Trmn(\beta(1+c)) \neq 0$. Hence $\mathcal{W}_{f_{u}}(w)= \mathcal{W}_f(\frac{w}{\eta})=0$.

\textbf{Case 2.} Let $m \equiv 0 \pmod 4$. If we have $v_2(m)=v ( \geq 2)$, then $v_2(a)= v_2(b) = 0$ and $v_2(k)=v_2(3m)=v_2(m)$. Summarizing all, we have $v_2(a)= v_2(b) < v_2(k)$ and $v_2(k) = \nu$. Thus, by using Lemma~\ref{L011} if we show that $\frac{w}{u^{2^{-(2m+1)}}} = \frac{w}{u^{2^{m-1}}} \not \in S_{d_1} \cap S_{d_2}$, where 
$S_{d_1} = \{X \in \F_{2^n} : \Trmn(X) = 0\}$ and $ S_{d_2} = \{X \in \F_{2^n} : \Trtn(X) = 0\}$. Since $u \in \F_{2^m}$, $\Trmn\left(\frac{w}{u^{2^{m-1}}} \right) = 0$, this implies that $\Trmn(\beta(1+c))=0$, contradicting the assumption that $\Trmn(\beta(1+c))\neq 0$. Thus, $S_1=0$ in this case too.

\textbf{Case 3.} Let $m \equiv 2 \pmod 4$. Then we have $v_2(m)=1$, $v_2(a)= v_2(b) = 0$ and $v_2(k)=v_2(3m)=v_2(m)=1$. Summarizing all, we have $v_2(a)= v_2(b) < v_2(k)$ and $v_2(k) < \nu$. By applying the same argument as in Subcase 1(b), we have $S_1=0$. This completes the proof.
\end{proof}

Here, we provide an example illustrating Theorem~\ref{T5}.
\begin{exmp}
Consider $F(X)=(X^{8}+X+g^{33})^{136}+(X^{8}+X+g^{33})^{80}+X$, a permutation polynomial over $\F_{2^{9}}$, where $\F_{2^{9}}^{*}=\langle g\rangle$ and $g$ is a primitive element of $\F_{2^{9}}$. Notice that $\Tr_{3}^{9}(g^{33})=0$. Then, $F(X)$ is P$c$N for all $c \in \{0, g^{73}, g^{146}, g^{219}, g^{292}, g^{365}, g^{438}, g^{511}\}$.
\end{exmp}

In Table~\ref{Table1}, we present a list of permutation polynomials over $\F_{2^n}$ with low $c$-differential uniformity.

 \begin{table}[h]
 \caption{A list of permutation polynomials over $\F_{2^n}$ with low $c$-differential uniformity}
 \label{Table1} 
 \begin{center}
\begin{adjustbox}{width=.8\textwidth}
 \begin{tabular}{|p{6cm}|p{9cm}|p{1cm}|p{2cm}|} 
  \hline
   $F(X)$ & Conditions & $_c\Delta_F$  & Reference \\
  \hline 
  $X^{2^n-2} + X^{2^n-1} + (X + 1)^{2^n-1}$ & $n \geq 2$, $c \in \F_{2^n} \setminus \{0, 1\}$ & $\leq 4$   &  \cite{PS}\\
  \hline
   $X+\Trn(\alpha X+X^{2^k+1})$ & $n \geq 3$ and $\gcd(k,n)=1$, $\alpha \in \F_{2^n}$ with $\Trn(\alpha)=1$, $c \in \F_{2^n} \setminus \{0, 1\}$ & $2$   &  \cite{HPS1}\\
      & $n \geq 3$ and $\gcd(k,n)=1$, $\alpha \in \F_{2^n}$ with $\Trn(\alpha)=1$, $c=0$ & $1$   &  \\
   & $n \geq 3$ and $\gcd(k,n)=1$, $\alpha \in \F_{2^n}$ with $\Trn(\alpha)=1$, $c =1$ & $2^n$   &  \\
  \hline
  $X^{-1}+\Trn\left(\dfrac{X^2}{X+1}\right)$ & all $n,c \in \F_{2^n} \setminus \{0, 1\}, \Trn(c)=\Trn(c^{-1})=1$ & $\leq 8$   & \cite{HPS1} \\
    & all $n,c \in \F_{2^n} \setminus \{0, 1\}, \Trn(c)=\Trn(c^{-1})=1$ or $\Trn(c)+\Tr(c^{-1})=1$ & $\leq 9$   & \\
   & all $n$, $c =0$ & 1 &  \\
  \hline
  $(X^{2^m}+X+\delta)^{2^{2m}+1}+X$ & $n=3m, \delta \in \F_{2^n}, c \in  \F_{2^m} \setminus \{1\}$ & $1$   & \cite{Garg} \\
    & $n=3m,  \Gamma_1=\{\delta \in \F_{2^n} \mid \Tr_{m}^{3m}(\delta)=1\}, \delta \in \Gamma_1, c \in  \F_{2^n} \setminus \F_{2^m}$ & $2$  &  \\
    & $n=3m, \delta \in \F_{2^n} \setminus \Gamma_1, c \in  \F_{2^n} \setminus \F_{2^m}$ & $\leq 4$  & \\
    \hline
    $(X^{2^m}+X+\delta)^{2^{2m-1}+2^{m-1}}+X$ & $n=3m, m \not \equiv 1 \pmod 3,  \delta \in \F_{2^n}, c \in  \F_{2^m} \setminus \{1\}$ & $1$   & \cite{Garg}  \\
    & $n=3m, m \not \equiv 1 \pmod 3, \Gamma_0=\{\delta \in \F_{2^n} \mid \Tr_{m}^{3m}(\delta)=0\}, \delta \in \Gamma_0, c \in  \F_{2^n} \setminus \F_{2^m}$ & $2$   &   \\
    & $n=3m, m \not \equiv 1 \pmod 3, \delta \in \F_{2^n} \setminus \Gamma_0, c \in  \F_{2^n} \setminus \F_{2^m}$ & $\leq 4$   & \\
    \hline
    $(X^{2^m}+X+\delta)^{2^{3m-1}+2^{m-1}}+X$ & $n=3m, 2m \not \equiv 1 \pmod 3,  \delta \in \F_{2^n}, c \in  \F_{2^m} \setminus \{1\}$ & $1$   & \cite{Garg}  \\
    & $n=3m, 2m \not \equiv 1 \pmod 3, \Gamma_0=\{\delta \in \F_{2^n} \mid \Tr_{m}^{3m}(\delta)=0\}, \delta \in \Gamma_0, c \in  \F_{2^n} \setminus \F_{2^m}$ & $2$   &   \\
    & $n=3m, 2m \not \equiv 1 \pmod 3, \delta \in \F_{2^n} \setminus \Gamma_0, c \in  \F_{2^n} \setminus \F_{2^m}$ & $\leq 4$   & \\
    \hline
\raggedright\(\begin{cases} 
\frac{1}{X}&\text{for~} X \not \in \{0,1,\gamma\}\\
0 &\text{for~} X = \gamma \\
\frac{1}{\gamma}  &\text{for~} X = 1 \\
 1  &\text{for~} X =  0 \\
\end{cases}\)  & $n \geq 4, c,\gamma \in \F_{2^n} \setminus \{0,1\}$ & $\leq 5$   & \cite{JKK2}  \\
    \hline
    $X+\Trn(X^{2^{k+1}+1}+X^3+X+uX)$ & $n=2k+1, u \in \F_{2^n}$ with $\Trn(u)=1$, $c \in  \F_{2^n} \setminus \{0,1\}$ & $2$ &  \cite{Liu}\\
    & $n=2k+1, k \not \equiv 1 \pmod 3, u \in \F_{2^n}$ with $\Trn(u)=1$, $c =1$ & $2^n$   &  \\
    \hline
    $X+\Trn(X^{2^{k+3}}+(X+1)^{2^{k+3}})$ & $n=2k+1, c \in  \F_{2^n} \setminus \{0,1\}$ & $2$ &    \cite{Liu}\\
    & $n=2k+1, k \not \equiv 1 \pmod 3,c =1$ & $2^n$   &     \\
    \hline
    $X^{-1}+\Trn(X^{-d}+(X+1)^{-d})$ & $n$ is even, any positive integer $d$, $\Trn(c)=\Trn(c^{-1})= 1, c \in  \F_{2^n} \setminus \{0,1\}$ & $\leq 8$   & \cite{Liu}\\
    & $n$ is even, any positive integer $d$, $\Trn(c)=\Trn(c^{-1})= 0$ or $\Trn(c)+\Trn(c^{-1})=1, c \in  \F_{2^n} \setminus \{0,1\}$ & $\leq 9$   & \\
    & $n$ is even, any positive integer $d$, $c=0$ & $1$   &    \\
    \hline
    $X^{-1}+\Trn(X^{-d}+(X+1)^{-d})$ & $n$ is even, any positive integer $d\in \{2^n-2,2^{\frac{n}{2}}+2^{\frac{n}{4}}+1, 2^{t_1}+1, 3(2^{t_2}+1)\}, 1 \leq t_1\leq\frac{n}{2}-1, 2 \leq t_2\leq\frac{n}{2}-1, c=1$ & $4$  &   \cite{Tan} \\
    \hline
    $(X^{2^m}+X+\delta)^{2^{2m-2}+2^{m-2}+1}+X$ 
    & $n=2m, c \in  \F_{2^m} \setminus \{1\}, \delta \in \F_{2^m}$   & $1$   &  Theorem~\ref{T1} \\
    & $n=2m, c \in  \F_{2^n} \setminus \F_{2^m}, \delta \in \F_{2^m}$   & $2$   &  \\
    & $n=2m, c \in  \F_{2^m} \setminus \{1\}, \delta \in \F_{2^n} \setminus \F_{2^m}$ with $\Tr_{1}^{2m}(\delta)=\Trm(1)$ and $p_m(\delta+\bar\delta)^{-1}) \neq 0$, $p_m(X)$ is defined in Lemma~\ref{L01}   & $1$   &  \\
    & $n=2m, c \in  \F_{2^n} \setminus \F_{2^m}, \delta \in \F_{2^n} \setminus \F_{2^m}$ with $\Tr_{1}^{2m}(\delta)=\Trm(1)$ and $p_m(\delta+\bar\delta)^{-1}) \neq 0$, $p_m(X)$ is defined in Lemma~\ref{L01}  & $\leq 4$   &  \\
    \hline
    $(X^{2^m}+X+\delta)^{3*2^{2m-2}+2^{m-2}}+X$ 
    & $n=2m, m \not \equiv 0 \pmod 3, c \in  \F_{2^m} \setminus \{1\}, \delta \in \F_{2^n}$   & $1$   & Theorem~\ref{T3} \\
    \hline
    $(X^{2^m}+X+\delta)^{3*2^{m-2}+2^{2m-2}}+X$ 
    & $n=2m, m \not \equiv 0 \pmod 3, c \in  \F_{2^m} \setminus \{1\}, \delta \in \F_{2^n}$   & $1$   &  Theorem~\ref{T4} \\
    \hline
    $(X^{2^m}+X+\delta)^{2^{2m+1}+2^{m}}+(X^{2^m}+X+\delta)^{2^{m+1}+2^{2m}}+X$ 
    & $n=3m, c \in  \F_{2^m} \setminus \{1\}, \delta \in \F_{2^n}, \Tr_{m}^{3m}(\delta)=0$   & $1$   &  Theorem~\ref{T5} \\
    \hline
 \end{tabular}
 \end{adjustbox}
 \end{center}
 \end{table}
 
\section{Permutation polynomial over $\F_{3^n}$ with low $c$-differential uniformity}\label{S4}

 In this section, we consider the $c$-differential uniformity of polynomial $F(X)=(X^{3^m}-X+\delta)^{3^{m}+4}+(X^{3^m}-X+\delta)^{5}+X$, which is a permutation, as stated in Lemma~\ref{L04}, over $\F_{3^{n}}$, where $\delta \in \F_{3^n}$ and $n=2m$. 
\begin{thm} \label{T6}
  Let $F(X)=(X^{3^m}-X+\delta)^{3^{m}+4}+(X^{3^m}-X+\delta)^{5}+X$ over $\F_{3^{n}}$, where $n=2m$ and $\delta\in\F_{3^n}$ such that $(1-[\Trnn (\delta)]^4 ) $ is a square element in $\F_{3^m}^{*}$.  Then$:$
  \begin{enumerate}
   \item[$(1)$] $F$ is  P$c$N for all $c \in \F_{3^m} \setminus \{1\}$.
   \item[$(2)$] Moreover, the $c$-differential uniformity of $F$ is $3$, for all $c \in \F_{3^n}\setminus\F_{3^m}$.    
 \end{enumerate}
  \end{thm}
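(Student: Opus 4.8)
The plan is to begin by drastically simplifying $F$. Writing $t=X^{3^m}-X$ and $s=\Trnn(\delta)\in\F_{3^m}$, the crucial observation is that $\Trnn(X^{3^m}-X+\delta)=\delta+\delta^{3^m}=s$, so that the two leading terms collapse,
\[
(X^{3^m}-X+\delta)^{3^m+4}+(X^{3^m}-X+\delta)^{5}=\big((t+\delta)^{3^m}+(t+\delta)\big)(t+\delta)^4=s\,(t+\delta)^4,
\]
giving $F(X)=s(t+\delta)^4+X$. Two structural facts will drive everything: since $t^{3^m}=-t$ we have $t^2,t^4\in\F_{3^m}$ and $\Trnn(\beta t^4)=\Trnn(\beta)\,t^4$ for every $\beta$; and $(t+\delta)^4=t^4+\delta t^3+\delta^3 t+\delta^4$ has Dembowski--Ostrom part exactly $t^4$, because $t^3=X^{3^{m+1}}-X^3$ and $t$ are $\F_3$-linear in $X$. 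If $s=0$ then $F=\mathrm{id}$ and the statement is degenerate, so I assume $s\neq0$, which is consistent with the hypothesis $1-s^4\neq0$. Recall $F$ permutes $\F_{3^n}$ by Lemma~\ref{L04}.

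Next I would set up the $c$-DDT entry through Weil sums exactly as in Theorems~\ref{T1}--\ref{T5}. With $A=a^{3^m}-a$ and $w=t+\delta$ one computes, using $(w+A)^4=w^4+Aw^3+A^3w+A^4$,
\[
D(X):=F(X+a)-cF(X)=s(1-c)w^4+sAw^3+sA^3w+sA^4+(1-c)X+a ,
\]
and Equation~\eqref{ddtw} yields $3^n\,{}_c\Delta_F(a,b)=\sum_{\beta}\chi_1(-\beta r)\sum_X\chi_1(\beta D(X))$ for a suitable constant $r=r(a,b)$. Because $t^4\in\F_{3^m}$, the quadratic part of $\Tr(\beta D(X))$ equals $\Trm\!\big(s\,\Trnn(\beta(1-c))\,t^4\big)$, so its coefficient vanishes precisely when $\Trnn(\beta(1-c))=0$; this is the splitting criterion, and I write the total as $S_0+S_1$ accordingly. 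The argument that $S_1=0$ is uniform and clean: the quadratic form $\Trm(\lambda t^4)$ vanishes on $\F_{3^m}$ (there $t=0$), hence $\F_{3^m}\subseteq\ker L$ for the linearized polynomial $L$ of Lemma~\ref{owalsh}, and the condition for a nonzero Walsh coefficient forces the affine remainder of $\Tr(\beta D)$ to vanish on $\F_{3^m}$; restricting to $X\in\F_{3^m}$ leaves only $\Tr(\beta(1-c)X)$, whose vanishing for all such $X$ means $\Trnn(\beta(1-c))=0$, contradicting membership in $S_1$. Thus every such Walsh coefficient, and hence $S_1$, is zero in both parts and for every residue of $m$ modulo $4$.

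For part~(1), $c\in\F_{3^m}\setminus\{1\}$, the split is on $\Trnn(\beta)=0$, which forces $\beta^{3^m}=-\beta$ on $S_0$. Substituting this relation into the condition that the $\F_3$-linear part of $\Tr(\beta D)$ be identically zero collapses that additive equation to one over $\F_{3^m}$ whose only root is $\beta=0$; then $S_0=3^n$, and with $S_1=0$ every $c$-DDT entry equals $1$, i.e.\ $F$ is P$c$N.

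For part~(2), $c\in\F_{3^n}\setminus\F_{3^m}$, the split is on $\Trnn(\beta(1-c))=0$, equivalently $\beta^{3^m}=\tilde c\,\beta$ with $\tilde c=-(1-c)/(1-c)^{3^m}$. Feeding this into the vanishing of the $\F_3$-linear part of $\Tr(\beta D)$, then clearing Frobenius twists and using $t^2,t^4\in\F_{3^m}$, I expect the equation to reduce to an additive cubic of the shape $\beta(\beta^2-\theta)=0$ over $\F_{3^m}$, where $\theta$ equals $1-s^4$ up to a nonzero square factor. Such a cubic has the three roots $0,\pm\sqrt\theta$ exactly when $\theta$ is a nonzero square, which is precisely the hypothesis that $1-[\Trnn(\delta)]^4$ is a nonzero square in $\F_{3^m}$; it never admits more than three roots, giving the upper bound ${}_c\Delta_F\le 3$, and the square hypothesis makes $\theta$ a nonzero square for all relevant $a$, so exactly three solutions are attained. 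Choosing $b$ so that $r$ annihilates this three-element solution set then gives $S_0=3^{\,n+1}$ while $S_1=0$, whence ${}_c\Delta_F=3$. The main obstacle I anticipate is exactly this bookkeeping in part~(2): carrying $A=a^{3^m}-a$ together with the three affine pieces $\delta t^3+\delta^3t$, $t^3$ and $t$ of $D$ through the substitution $\beta^{3^m}=\tilde c\beta$ and verifying that the discriminant of the resulting cubic is $1-s^4$ up to squares, so that the square hypothesis is precisely what is needed.
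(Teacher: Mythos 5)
Your strategy coincides with the paper's proof: the same simplification (your $F(X)=s(t+\delta)^4+X$ with $s=\Trnn(\delta)$, $t=X^{3^m}-X$ is exactly the paper's expanded form), the same Weil-sum expression for the $c$-DDT entries, the same split into $S_0+S_1$ according to whether $\Trnn(\beta(1-c))=0$, and the same argument that $S_1=0$ via Lemma~\ref{owalsh} and $\F_{3^m}\subseteq \mathrm{Ker}(L)$ (your version of that step, reducing the restriction to $\F_{3^m}$ to the vanishing of $\Tr(\beta(1-c)X)$, is in fact cleaner than the paper's degree argument). Your explicit exclusion of $s=0$ is also a point worth keeping, since part $(2)$ genuinely fails for the identity map.

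However, there is a genuine gap in your part $(1)$. You assert that substituting $\beta^{3^m}=-\beta$ into the vanishing of the linear part ``collapses that additive equation to one over $\F_{3^m}$ whose only root is $\beta=0$,'' and you never invoke the square hypothesis there, reserving it entirely for part $(2)$. That assertion is precisely where the hypothesis is needed, and it is false without it. Concretely, the condition defining the extra contributions to $S_0$ is $(1-c)(1-s^4)\beta+(1-c)^{3^{m-1}}s^{2\cdot 3^{m-1}}\beta^{3^{m-1}}=0$; cubing (a bijection in characteristic $3$, so no roots are gained or lost) and using $\beta^{3^m}=-\beta$, $c,s\in\F_{3^m}$ gives $A\beta^3-B\beta=0$ with $A=(1-c)^3(1-s^4)^3$ and $B=(1-c)s^2$, so any nonzero root satisfies $\beta^2=B/A\in\F_{3^m}^{*}$. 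Now the dichotomy: if $1-s^4=\nu^2$ with $\nu\in\F_{3^m}^{*}$, then $\beta=\pm\, s\,(1-c)^{-1}\nu^{-3}\in\F_{3^m}$, and such $\beta$ has $\Trnn(\beta)=2\beta\neq 0$, so it violates the constraint and only $\beta=0$ survives --- this is the paper's argument, and it is exactly the square hypothesis at work. If instead $1-s^4$ is a nonsquare in $\F_{3^m}$, then $B/A$ is a nonsquare in $\F_{3^m}$, and its two square roots in $\F_{3^{2m}}$ satisfy $\beta^{3^m}=-\beta$ (the square roots of nonsquares of $\F_{3^m}$ make up the trace-zero line), i.e.\ they \emph{do} lie in your constraint set; taking $b=F(a)-c(\delta^{3^m+4}+\delta^5)$ then gives $S_0=3^{n+1}$ while $S_1=0$, so the corresponding $c$-DDT entry is $3$ and P$c$N fails. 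So the hypothesis that $1-[\Trnn(\delta)]^4$ is a square in $\F_{3^m}^{*}$ is indispensable already in part $(1)$; a proof in which part $(1)$ goes through unconditionally cannot be correct. (Your part $(2)$ sketch has the same shape as the paper's Case 2 --- reduction to $A\beta^3-B\beta=0$ with $a$-dependent coefficients, at most three roots, equality for suitable $(a,b)$ --- and the bookkeeping you defer there is real work, but the misplacement of the hypothesis is the substantive error.)
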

\begin{proof} 
Clearly, after simplifying
\allowdisplaybreaks\begin{align*}
 F(X)= & \Trnn(\delta) (X^{4\cdot 3^m}+X^4-X^{3^{m+1}+1}-X^{3^m+3} + \delta X^{3^{m+1}} + \delta^3 X^{3^m}+\delta^4)\\
  & \qquad \qquad  + \Trnn(\delta) (-\delta X^3 - \delta^3 X) + X.
\end{align*}

We know that for any $(a, b) \in \F_{3^{n}} \times \F_{3^{n}}$,
the $c$-DDT entry $_c\Delta_F(a, b)$ is given by the number of solutions $X \in \F_{3^{n}}$ of the following equation
\begin{equation}\label{eq31}
 F(X+a)-cF(X)=b,
\end{equation}
or equivalently,
\begin{equation*}
 (1-c)F(X)+ \Trnn(\delta)\Trnn ( (a^3-a^{3^{m+1}})X+a(X^3-X^{3^{m+1}})) + F(a)- \delta^{3^m+4}-\delta^5 = b.
 \end{equation*}
Now, by using Equation~\eqref{ddtw}, the number of solutions $X \in \F_{3^n}$, $_c\Delta_F(a,b)$, of the above Equation~\eqref{eq31} is given by 
\allowdisplaybreaks
\begin{align*}
 & \displaystyle{\frac{1}{3^n} \sum_{\beta \in \F_{3^n}} \sum_{X \in \F_{3^n}} \omega^{\displaystyle{\Tr\left(\beta((1-c)F(X)+ \Trnn(\delta)\Trnn ( (a^3-a^{3^{m+1}})X+a(X^3-X^{3^{m+1}})) )\right)}}} \\
 & \hspace{3cm} \omega^{\displaystyle{\Tr(\beta(F(a)- \delta^{3^m+4}-\delta^5-b))}},
\end{align*}
where $\omega =e^{2\pi i/3}$. Equivalently,
\allowdisplaybreaks
\begin{align*}
  _c\Delta_F(a,b) & = \dfrac{1}{3^n} \sum_{\beta \in \F_{3^n}} \omega^{\displaystyle{\Tr\left(\beta\left(F(a)-b- \delta^{3^m+4}-\delta^5 \right)\right)}}\\
& \sum_{X \in \F_{3^n}} \omega^{\displaystyle{\Tr\left(\beta((1-c)F(X)+ \Trnn(\delta)\Trnn ( (a^3-a^{3^{m+1}})X+a(X^3-X^{3^{m+1}})) )\right)}}.
 \end{align*}
Let 
\allowdisplaybreaks
\begin{align*}
 T_0 & = \Tr(\beta(1-c)F(X)), \\
 T_1 & = \Tr(\beta \Trnn(\delta)\Trnn ( (a^3-a^{3^{m+1}})X+a(X^3-X^{3^{m+1}}))).
\end{align*}
Then,
\begin{equation}
\label{eq35}
 _c\Delta_F(a,b) = \frac{1}{3^n} \sum_{\beta \in \F_{3^n}} \omega^{\displaystyle{\Tr\left(\beta\left(F(a)-b- \delta^{3^m+4}-\delta^5 \right)\right)}}\sum_{X \in \F_{3^n}} \omega^{\displaystyle{T_0+T_1}}.
\end{equation}

\textbf{Case 1.} Let $c\in \F_{3^m}\setminus \{1\}$ and $\delta \in \F_{3^n}$. To compute $T_0$ and $T_1$, we first write
\allowdisplaybreaks
\begin{equation*} 
\begin{split}
T_{1} & =  \Tr(\beta \Trnn(\delta)\Trnn ( (a^3-a^{3^{m+1}})X+a(X^3-X^{3^{m+1}}))) \\
 & = \Tr\left(\Trnn(\beta)\Trnn(\delta)(a^3-a^{3^{m+1}})X+\Trnn(\beta^{3^{m-1}})\Trnn(\delta^{3^{m-1}})(a^{3^m}-a)^{3^{m-1}}X\right),
\end{split}
\end{equation*}
and
\allowdisplaybreaks
\begin{align*} 
T_{0} & =  \Tr(\beta(1-c)F(X)) \\
 & = \Tr\left(\beta(1-c)\Trnn(\delta) (X^{4\cdot 3^m}+X^4-X^{3^{m+1}+1}-X^{3^m+3} + \delta X^{3^{m+1}} + \delta^3 X^{3^m}+\delta^4)\right.\\
 &\qquad\qquad \left. +  \Trnn(\delta) (-\delta X^3 - \delta^3 X) + X\right)\\
 & = \Tr\left(\beta(1-c)(\delta^{3^m+4}+\delta^5)\right)+ \Tr\left(\Trnn(\delta) \Trnn(\beta(1-c))X^4 +\beta(1-c)X \right.\\
 & \qquad \left.- \Trnn(\delta)^{3^{m-1}} \Trnn(\beta(1-c))^{3^{m-1}}X^{3^{m-1}+1} + (\Trnn(\delta) \delta \beta(1-c))^{3^{m-1}}X\right.\\
 & \left. + (-(\Trnn(\delta) \delta \beta(1-c))^{3^{2m-1}}+(\Trnn(\delta) \delta^3 \beta(1-c))^{3^m}-\Trnn(\delta) \delta^3 \beta(1-c))X \right).
\end{align*}

Now Equation~\eqref{eq35} reduces to
\allowdisplaybreaks
\begin{align*}
 _c\Delta_F(a,b) & =\frac{1}{3^n} \sum_{\beta \in \F_{3^n}} \omega^{\displaystyle{\Tr\left(\beta(F(a)-b- c(\delta^{3^m+4}+\delta^5))\right)}}\\
 & \qquad \qquad \qquad \sum_{X \in \F_{3^n}} \omega^{\displaystyle{\Tr(u_1 X^4-u_2 X^{3^{m-1}+1}+vX)}},
\end{align*}
where 
\allowdisplaybreaks
\begin{align*}
 u_1 & = \Trnn(\delta) \Trnn(\beta(1-c)) \\
 u_2 & = \Trnn(\delta)^{3^{m-1}} \Trnn(\beta(1-c))^{3^{m-1}} = u_1^{3^{m-1}}\\
 v & = \Trnn(\beta)\Trnn(\delta)(a^3-a^{3^{m+1}})+\Trnn(\beta^{3^{m-1}})\Trnn(\delta^{3^{m-1}})(a^{3^m}-a)^{3^{m-1}} +\beta(1-c)\\ 
 & + (\Trnn(\delta) \delta \beta(1-c))^{3^{m-1}} -(\Trnn(\delta) \delta \beta(1-c))^{3^{2m-1}}+(\Trnn(\delta) \delta^3 \beta(1-c))^{3^m}\\
 & \qquad \qquad -\Trnn(\delta) \delta^3 \beta(1-c).
\end{align*}

Further, splitting the above sum depending on whether $\Trnn(\beta)$ is~$0$ or not, we get
\allowdisplaybreaks
\begin{align*}
 S_0 & = \sum_{\substack{\beta \in \F_{3^n} \\ \Trnn(\beta)=0}} \omega^{\displaystyle{\Tr(\beta(F(a)-b- c(\delta^{3^m+4}+\delta^5)))}}\\
 & \sum_{X \in \F_{3^n}} \omega^{\displaystyle{\Tr\left(\left((1-c)\beta-(1-c)\beta\Trnn(\delta)^4 + (1-c)^{3^{m-1}}\beta^{3^{m-1}}\Trnn(\delta)^{2 \cdot 3^{m-1}} \right)X\right)}} \\
 & = 3^n + \sum_{\substack{{\beta \in \F_{3^n}^{*}}\\ \Trnn(\beta)=0}} \omega^{\displaystyle{\Tr(\beta(F(a)-b- c(\delta^{3^m+4}+\delta^5)))}}\\
 & \sum_{X \in \F_{3^n}} \omega^{\displaystyle{\Tr\left(\left((1-c)\beta-(1-c)\beta\Trnn(\delta)^4 + (1-c)^{3^{m-1}}\beta^{3^{m-1}}\Trnn(\delta)^{2 \cdot 3^{m-1}} \right)X\right)}}. 
\end{align*}
To compute $S_0$, we need to find the number of solutions $\beta \in \F_{3^n}$ for the following equation
$$
(1-c)\beta-(1-c)\beta\Trnn(\delta)^4 + (1-c)^{3^{m-1}}\beta^{3^{m-1}}\Trnn(\delta)^{2 \cdot 3^{m-1}} = 0.
$$
It is clear that when $\Trnn(\delta)=0$, the above equation has only one solution $\beta=0$. Let us assume that $\Trnn(\delta) \neq 0$. Raising the above equation to a cubic  power, we get
$$
(1-c)^3\beta^3-(1-c)^3\beta^3(\Trnn(\delta)^4)^3 - (1-c) \beta \Trnn(\delta)^{2} = 0,
$$
or equivalently, $A \beta^3 - B \beta =0$, where
$ A = (1-c)^3 \left(1-\Trnn(\delta)^4\right)^3$ and $B = (1-c) \Trnn(\delta)^2$. One can clearly observe that $A \beta^3 - B \beta =0$ has a solution in $\beta \in \F_{3^n}^{*}$ if and only if $\dfrac{B}{A}$ is a square in $\F_{3^m}$. Now $\dfrac{B}{A} = \dfrac{\Trnn(\delta)^2}{(1-c)^2(1-[\Trnn(\delta)^4])^3} = \left(\dfrac{\Trnn(\delta)}{(1-c)\nu^3}\right)^2$, where $\nu \in \F_{3^m}^{*}$ and $\nu^2 =  (1-\Trnn(\delta)^4)$. Hence $\beta = \pm \dfrac{\Trnn(\delta)}{(1-c)\nu^3}$, however, $\Trnn(\beta) \neq 0$ as $\Trnn(\beta) = \pm 2  \dfrac{\Trnn(\delta)}{(1-c)\nu^3}$. This would give us a contradiction to the condition that $\Trnn(\beta)=0$. Hence $S_0=3^n$.

Next we compute sum $S_1$,
\allowdisplaybreaks\begin{align*}
 S_1 & = \sum_{\substack{\beta \in \F_{3^n} \\ \Trnn(\beta)\neq 0}} \omega^{\displaystyle{\Tr(\beta(F(a)-b- c(\delta^{3^m+4}+\delta^5)))}} \sum_{X \in \F_{3^n}} \omega^{\displaystyle{\Tr\left(u_1 X^4-u_2 X^{3^{m-1}+1}+vX\right)}}\\
 & = \sum_{\substack{\beta \in \F_{3^n} \\ \Trnn(\beta)\neq0}} \omega^{\displaystyle{\Tr(\beta(F(a)-b- c(\delta^{3^m+4}+\delta^5)))}} \mathcal{W}_G(-v),
 \end{align*}
 where $\mathcal{W}_G(-v)$ is the Walsh transform of trace of function $G ( X) = u_1 X^4-u_2 X^{3^{m-1}+1}$ at $-v$. Now, from Lemma ~\ref{owalsh}, the absolute square of the Walsh transform coefficient of $G$ is given by
 \begin{equation*} \lvert \mathcal{W}_G(-v) \rvert^2 =
  \begin{cases}
   3^{n+\ell} &~\mbox{if}~G(X)+\Tr(vX)\equiv0~\mbox{on Ker}~(L),  \\
    0 &~\mbox{otherwise},
  \end{cases}
 \end{equation*}
where $\ell$ is dimension of kernel of the linearized polynomial 
 $$
 L(X)=u_1(X-X^{3^m})^3-u_2(X-X^{3^m})^{3^{m-1}}.
 $$ 
It is obvious that $\F_{3^m} \subseteq$ Ker$(L) $. Therefore, if we show that $G(X)+\Tr(vX) \neq 0$ for all $X \in \F_{3^m}$, then $S_1=0$. We shall now prove that $G(X)+\Tr(vX)$ is not identically zero on $\F_{3^m}$. For $X \in \F_{3^m}$, the polynomial $G(X)+\Tr(vX)$ gets reduced to the polynomial 
\[
u_1 X^4-u_2 X^{3^{m-1}+1}+ \Trnn(\beta(1-c))X+ \Trnn(\beta(1-c))^3 X^3+\cdots+\Trnn(\beta(1-c))^{3^{m-1}}X^{3^{m-1}},
\]
 which is a polynomial over $\F_{3^m}$. For $m=1$, $G(X)+\Tr(vX)$ is not identically zero on $\F_{3}$ as $\Trnn(\beta(1-c)) \neq 0$. And for $m \geq 2$, it is easy to observe that degree of $G(X)+\Tr(vX)$ is strictly less than $3^{m}-1$ and hence the claim is shown.

\textbf{Case 2.} Let $c \in \F_{3^n}\setminus\F_{3^m}$. Then using the same values of $u_1, u_2$ and $v$ as above, one can define 
the sums $S_0$ and $S_1$ depending upon $\Trnn(\beta(1-c))=0$ or $\Trnn(\beta(1-c)) \neq 0$. First,
\allowdisplaybreaks
\begin{align*}
 S_0 & = \sum_{\substack{\beta \in \F_{3^n} \\ \Trnn(\beta(1-c))=0}} \omega^{\displaystyle{\Tr(\beta(F(a)-b- c(\delta^{3^m+4}+\delta^5)))}} \sum_{X \in \F_{3^n}} \omega^{\displaystyle{\Tr\left(vX\right)}}\\
 & = 3^n + \sum_{\substack{\beta \in \F_{3^n}^{*}\\ \Trnn(\beta(1-c))=0}} \omega^{\displaystyle{\Tr(\beta(F(a)-b- c(\delta^{3^m+4}+\delta^5)))}} \sum_{X \in \F_{3^n}} \omega^{\displaystyle{\Tr\left(vX\right)}}.
\end{align*}
To compute $S_0$, we need to find the number of solutions for
\allowdisplaybreaks\begin{align*}
 & \Trnn(\beta)\Trnn(\delta)(a^3-a^{3^{m+1}})+\Trnn(\beta^{3^{m-1}})\Trnn(\delta^{3^{m-1}})(a^{3^m}-a)^{3^{m-1}} +\beta(1-c)\\ 
 & + (\Trnn(\delta) \delta \beta(1-c))^{3^{m-1}} -(\Trnn(\delta) \delta \beta(1-c))^{3^{2m-1}}+(\Trnn(\delta) \delta^3 \beta(1-c))^{3^m}\\
 & \qquad \qquad -\Trnn(\delta) \delta^3 \beta(1-c) =0,
\end{align*}
or equivalently, using $\Trnn(\beta(1-c))=0$, we have
\allowdisplaybreaks\begin{align*}
 & (1-\tilde c) \Trnn(\delta)(a-a^{3^m})^3 \beta + (1-\tilde c)^{3^{m-1}}\Trnn(\delta^{3^{m-1}})(a^{3^m}-a)^{3^{m-1}} \beta^{3^{m-1}} +\beta(1-c)\\ 
 & \qquad \qquad  + (\Trnn(\delta)^{2 \cdot 3^{m-1}} (1-c)^{3^{m-1}} \beta^{3^{m-1}} -\Trnn(\delta)^4 (1-c) \beta =0,
\end{align*}
where $\tilde c = (1-c)^{1-3^m}$. Raising the above equation  to the cubic power, we get $A\beta^3 - B\beta=0$, where 
$$
A= (1-\tilde c)^3 \Trnn(\delta^3) (a-a^{3^m})^9 +(1-c)^3 - (1-c)^3 (\Trnn(\delta))^{12}
$$
and
$$
B = \tilde c (1 - \tilde c)^{3^m} \Trnn(\delta) (a-a^{3^m}) + \tilde c (1 -  c)^{3^m} \Trnn(\delta)^2.
$$
It is easy to see that except for $\beta=0$, $A\beta^3 -B\beta=0$ has a solution $\beta \in \F_{3^n}^{*}$ if $\dfrac{B}{A}$ is a square (notice that for $a \in \F_{3^m}$,$\dfrac{B}{A}$ is always a square). If it is a square, then $A\beta +B\beta^3=0$ has three solution in $\F_{3^n}$, namely, $\beta=0, \beta =\beta_1$ and $\beta = -\beta_1$. Hence, $S_0$ becomes  
\allowdisplaybreaks
\begin{align*}
& 3^n\left(1 + \omega^{\displaystyle{\Tr(\beta_1(F(a)-b- c(\delta^{3^m+4}+\delta^5)))}} +  \omega^{\displaystyle{\Tr(-\beta_1(F(a)-b- c(\delta^{3^m+4}+\delta^5)))}}\right).
\end{align*}
Clearly, for those pairs $(a,b)\in\F_{3^n}\times\F_{3^n}$ for which $b=F(a)- c(\delta^{3^m+4}+\delta^5)$, we have $S_0=3^{n+1}$; and for the other pairs  $(a,b) \in \F_{3^n} \times \F_{3^n}$, we have $S_0=3^n(1+\omega^{\Tr(\alpha)}+\omega^{\Tr(-\alpha)})$, where $\alpha = \beta_1(F(a)-b- c(\delta^{3^m+4}+\delta^5))$. Hence, the maximum value that $S_0=3^n(1+\omega^{\Tr(\alpha)}+\omega^{\Tr(-\alpha)})$ can attain is $3^{n+1}$, as $\Tr(-\alpha)=-\Tr(\alpha)$. This yields that $S_0=3^{n+1}$.

Next, we analyze the second sum,
\allowdisplaybreaks
 \begin{align*}
 S_1 & = \sum_{\substack{\beta \in \F_{3^n} \\ \Trnn(\beta(1-c))\neq0}} \omega^{\displaystyle{\Tr(\beta(F(a)-b- c(\delta^{3^m+4}+\delta^5)))}} \\
 & \qquad \sum_{X \in \F_{3^n}} \omega^{\displaystyle{\Tr(u_1 X^3-u_2 X^{3^{m-1}+1}+vX)}}\\
 & = \sum_{\substack{\beta \in \F_{3^n} \\ \Trnn(\beta(1-c))\neq0}} \omega^{\displaystyle{\Tr(\beta(F(a)-b- c(\delta^{3^m+4}+\delta^5)))}} \mathcal{W}_G(-v).
 \end{align*} 
where $\mathcal{W}_G(-v)$ is the Walsh transform of trace of function $G : X \mapsto u_1 X^4-u_2 X^{3^{m-1}+1}$ at $-v$. By following  similar arguments as in the Case 1 above, one can show that $S_1=0$. This completes the proof. 
\end{proof}

Here, we give an example for the above Theorem~\ref{T6}
\begin{exmp}
Let $F(X)=(X^{9}-X+g^{10})^{13}+(X^{9}-X+g^{33})^{5}+X$, a permutation polynomial over $\F_{3^{4}}$ and $\F_{3^{4}}^{*}=\langle g \rangle $, where $g$ is a primitive element of $\F_{3^{4}}$. Observe that $1-(\Tr_{2}^{4}(g^{10}))^4$ is square of $2g^3 + 2g^2 + 2 \in \F_{3^2}^{*}$. Then, $F(X)$ is P$c$N for all $c \in \{0,
g^{10}, g^{20}, g^{30}, g^{40}, g^{50}, g^{60}, g^{70}, g^{80}\}$ and has $c$-differential uniformity $3$ for all $c \in \F_{3^4}\setminus \{0,1, g^{10}, g^{20}, g^{30}, g^{40}, g^{50}, g^{60}, g^{70}, g^{80}\}$.
\end{exmp}

\section{Conclusions}
\label{sec:concl}
In this paper we show that some permutation polynomials are P$c$N over finite fields of even characteristic and even dimension, for $c\neq 1$ in the subfield of half dimension. This adds to the small list of known (non-trivial) P$c$N functions. We also find a class of permutation polynomials over finite fields of characteristic~$3$, of even dimension $n=2m$, which is P$c$N for $c\in\F_{3^m}\setminus \{1\}$, and has $c$-differential uniformity~$3$ for all $c\notin\F_{3^m}$. We use Walsh transforms computations, Weil sums and other number theoretical techniques to deal with some very delicate equations, which could have an interest of its own.

\section*{Acknowledgments}
We sincerely thank the editors for handling our paper, and the referees for their careful reading, valuable comments, and constructive suggestions.

\end{document}